\documentclass[12pt]{article}

\usepackage{amssymb,a4}
\usepackage{amsmath,amsfonts,amssymb,amsthm}
\usepackage{bm}

\setlength{\topmargin}{0mm}
\setlength{\oddsidemargin}{0mm}
\setlength{\evensidemargin}{0mm}
\setlength{\textheight}{210mm}
\setlength{\textwidth}{160mm}


\newcommand{\Z}{{\mathbb Z}}

\newcommand{\C}{{\mathbb C}}

\newcommand{\N}{{\mathbb N}}
\newcommand{\U}{{\mathcal U}}
\newcommand{\wtL}{{\widetilde{L}}}
\newcommand{\wtM}{{\widetilde{M}}}

\def\<{\langle}
\def\>{\rangle}

\newtheorem{thm}{Theorem}[section]
\newtheorem{prop}[thm]{Proposition}
\newtheorem{lem}[thm]{Lemma}
\newtheorem{cor}[thm]{Corollary}
\newtheorem{rmk}[thm]{Remark}
\newtheorem{definition}[thm]{Definition}

\begin{document}

\makeatletter \@addtoreset{equation}{section}
\def\theequation{\thesection.\arabic{equation}}
\makeatother \makeatletter

\begin{center}
{\Large \bf On Harish-Chandra modules of the Lie algebra arising from the $2$-Dimensional Torus}
\end{center}

\begin{center}
{Zhiqiang Li,
Shaobin Tan$^{a}$\footnote{Partially supported by China NSF grant (Nos.11471268, 11531004).}
and Qing Wang$^{a}$\footnote{Partially supported by
 China NSF grant (Nos.11531004, 11622107), Natural Science Foundation of Fujian Province
	(No.2016J06002) and Fundamental Research Funds for the Central University (No.20720160008).}\\
$\mbox{}^{a}$School of Mathematical Sciences, Xiamen University,
Xiamen 361005, China}
\end{center}

\begin{abstract}
Let $A=\C[t_1^{\pm1},t_2^{\pm1}]$ be the algebra of Laurent polynomials in two variables and
$B$ be the set of skew derivations of $A$. Let $L$ be the universal central extension of the derived Lie subalgebra of the Lie algebra $A\rtimes B$.
Set $\wtL=L\oplus\C d_1\oplus\C d_2$, where $d_1$, $d_2$ are two degree derivations. A Harish-Chandra module is defined as an irreducible weight module with finite dimensional weight spaces.
In this paper, we prove that a Harish-Chandra module of the Lie algebra $\wtL$ is a uniformly bounded module or a generalized highest
weight (GHW for short) module. Furthermore, we prove that the nonzero level Harish-Chandra modules of $\wtL$ are
GHW modules. Finally, we classify all the GHW Harish-Chandra modules of $\wtL$.
\end{abstract}

\section{Introduction}

Harish-Chandra modules (or called irreducible quasifinite weight modules) are no doubt one of the most important family in the study of the representation theory of infinite dimensional
Lie algebras. The complete classification results of Harish-Chandra modules over the Virasoro algebra \cite{KS}, \cite{M}, higher rank Virasoro algebras \cite{Su1}, \cite{LZ1}, and many other Lie algebras related to the Virasoro algebra have been achieved in \cite{GLZ1}, \cite{GLZ2}, \cite{LZ2}, \cite{LJ}, \cite{Maz}, \cite{Su2}, \cite{Su3}, \cite{SXX1}, \cite{SXX2}, \cite{WT} etc. In this paper, we study Harish-Chandra modules over the Lie algebra $\wtL=L\oplus\C d_1\oplus\C d_2$, this Lie algebra can be seen as a generalization of the twisted Heisenberg-Virasoro algebra from rank one to rank two (see \cite{XLT}, \cite{TWX} for details). The structure of the Lie algebra $L$ has been studied in \cite{XLT} ten years ago. Recently, the connection of the Lie algebra $L$ with the vertex algebra has been established in \cite{GW} and the representation theory of the Lie algebra $L$ has been studied in \cite{TWX}, \cite{GL}, \cite{BT}. However, the complete classification result of the Harish-Chandra modules
of the Lie algebra $\wtL$ is still unknown. This paper is contribute to this problem. We prove that a Harish-Chandra module of $\wtL$ is a uniformly bounded module or a GHW module and classify the nonzero level Harish-Chandra modules of the Lie algebra $\wtL$. Based on these results, the complete classification of Harish-Chandra modules of $\wtL$ reduces to the classification of uniformly bounded modules of $\wtL$. In \cite{GL}, the uniformly bounded modules satisfying the condition that the torus subalgebra acting nonzero were classified. Another reason to study the Harish-Chandra modules of the Lie algebra $\wtL$ comes from the representation theory of the nullity $2$ toroidal extended affine Lie algebras of type $A_1$ \cite{CLT}. It was proved therein that the classification of irreducible integrable
modules with finite dimensional spaces of the nullity $2$ toroidal extended affine Lie algebras of type $A_1$ can be reduced to the
classification of Harish-Chandra modules of $\wtL$. This philosophy is similar to that the classification of irreducible integrable
modules of the the full toroidal Lie algebra can be reduced to the
classification of irreducible $(\text{Der}(A_n)\ltimes A_n)$-modules \cite{RJ}, where $A_n=\C[t_1^{\pm1},\ldots, t_n^{\pm1}]$.

The techniques in this paper are following from \cite{LT1}, \cite{LS}, \cite{LZ1}, \cite{Su1}. However, we want to point out that in \cite{LT1}, the construction of the GHW modules of the Virasoro-like algebra are induced from the $\Z$-graded irreducible modules of a Heisenberg subalgebra. While in this paper, the construction of the GHW module of the Lie algebra $\wtL$ comes from the $\Z$-graded irreducible module of the subalgebra $\mathcal{H}_{\bm{b_{1}}}$ (see the definition in Section 2), which is the twist of three Heisenberg subalgebras. So we first need to classify the $\Z$-graded irreducible $\mathcal{H}_{\bm{b_{1}}}$-modules with finite dimensional graded spaces, which are done in the Proposition \ref{zlimoha} and the Proposition \ref{zgimfh}. For the classification of GHW Harish-Chandra modules of $\wtL$, we achieve this by considering the highest weight modules of the Lie algebra $L$ tensor a torus. Moreover, we prove that these tensor product modules of $\wtL$ are completely reducible, and the GHW Harish-Chandra modules of $\wtL$ are isomorphic to the irreducible components of these tensor product modules.

The paper is organized as follows. In Section 2, we prove that
a Harish-chandra module of $\wtL$ is a uniformly bounded module or a GHW module.
In Section 3, we prove that a nonzero level Harish-Chandra module of $\wtL$ is a GHW module. Then we characterize the GHW Harish-Chandra modules with nonzero level. In Section 4, we classify
the GHW Harish-Chandra modules of $\wtL$.

Throughout this paper we use $\C$,
$\Z$, $\Z_+$, $\N$ to denote
the sets of complex numbers, integers, nonnegative integers and positive integers respectively.
All the vector spaces mentioned in this paper are over $\C$. As usual,
if $u_{1}$, $u_{2}$, $\cdots$, $u_{k}$ are elements on some vector space, we
denote $\<u_{1}, u_{2}, \cdots u_{k}\>$ the linear span of the elements $u_{1}$, $u_{2}$, $\cdots$, $u_{k}$ over $\C$.
The universal enveloping algebra for a Lie algebra $\mathfrak{g}$ is denoted by $\U(\mathfrak{g})$ and
$GL_{n\times n}(\Z)$ denotes the set of $n\times n$ invertible matrices with entries in $\Z$.

\section{Harish-Chandra modules of $\wtL$}
In this section, we first recall some basic definitions about Harish-Chandra modules of $\wtL$ and some results for Heisenberg algebras. Then we prove that a Harish-Chandra module of $\wtL$ is a uniformly bounded module or a generalized highest weight module.

Let $\bm{e_{1}}=(1, 0)$,
$\bm{e_{2}}=(0, 1)$, $\Gamma=\Z\bm{e_{1}}+\Z\bm{e_{2}}$.
Let $(x_{1}, x_{2}), (y_{1}, y_{2})\in\Gamma$, we define $(x_{1}, x_{2})>(y_{1}, y_{2})$ if
and only if $x_{1}>y_{1}$ and $x_{2}>y_{2}$; $(x_{1}, x_{2})\geq(y_{1}, y_{2})$ if and only if
$x_{1}\geq y_{1}$ and $x_{2}\geq y_{2}$. For any $\bm{b_{1}}=b_{11}\bm{e_{1}}+b_{12}\bm{e_{2}}$,
$\bm{b_{2}}=b_{21}\bm{e_{1}}+b_{22}\bm{e_{2}}\in \Gamma$,
we set \[\text{det}{\bm{b_{1}} \choose \bm{b_{2}}}=b_{11}b_{22}-b_{12}b_{21}.\]
We recall the definition
of the Lie algebra arising from the $2$-Dimensional Torus (or called the Heisenberg-Virasoro algebra of rank two). See $\cite{XLT}$ (c.f.\cite{TWX}) for details.

\begin{definition}\label{defHVart}
{\em The {\em Heisenberg-Virasoro algebra of rank two} is the Lie algebra spanned by
\[\{t^{\bm{m}}, E(\bm{m}), K_{i}\mid\bm{m}\in\Gamma\setminus\{\bm{0}\},
i=1, 2, 3, 4\}\]} with Lie bracket defined by
\[[t^{\bm{m}}, t^{\bm{n}}]=0,\ [K_{i}, L]=0, i=1, 2, 3, 4;\]
\[[t^{\bm{m}}, E(\bm{n})]=\text{det}{\bm{n} \choose \bm{m}}
t^{\bm{m}+\bm{n}}+\delta_{\bm{m}+\bm{n}, 0}h(\bm{m});\]
\[[E(\bm{m}), E(\bm{n})]=\text{det}{\bm{n} \choose \bm{m}}
E(\bm{m}+\bm{n})+\delta_{\bm{m}+\bm{n}, 0}f(\bm{m}),\]
$\bm{m}=m_{1}\bm{e_{1}}+m_{2}\bm{e_{2}}$ , where
$h(\bm{m})=m_{1}K_{1}+m_{2}K_{2}$, $f(\bm{m})=m_{1}K_{3}+m_{2}K_{4}$.
\end{definition}
We denote this Lie algebra by $L$. Set $E(\bm{0})=t^{\bm{0}}=0$ for convenience. Obviously the subalgebra
$\<E(\bm{m}),\ K_{3},\ K_{4}\mid\bm{m}\in\Gamma\setminus\{\bm{0}\}\>$ of $L$
is a Virasoro-like algebra. It is obvious that $L$ is a
$\Z^2$-graded Lie algebra. Let $\wtL=L\oplus\C d_{1}\oplus\C d_{2}$, where $d_1,d_2$ are defined by
\[[d_i,E(\bm{m})]=m_iE(\bm{m}),\ [d_i,t^{\bm{m}}]=m_it^{\bm{m}},\ [d_{i},K_{j}]=0,\ [d_1,d_2]=0\]
for $\bm{m}=m_{1}\bm{e_{1}}+m_{2}\bm{e_{2}}\in \Gamma$, $i=1,2$ and $j=1,2,3,4$.
The following lemma is easy to check.

\begin{lem}
Let $0\neq \bm{b_{1}}=b_{11}\bm{e_{1}}+b_{12}\bm{e_{2}}\in\Gamma$
and $\bm{b_{2}}=b_{21}\bm{e_{1}}+b_{22}\bm{e_{2}}\in\Gamma$.\\
(1) $\<E(\pm k\bm{b_{1}}),\ f(\bm{b_{1}})\mid k\in\N\>$,
$\<E(k\bm{b_{1}}),\ t^{-k\bm{b_{1}}},\ h(\bm{b_{1}})\mid k\in\N\>$ and
$\<E(-k\bm{b_{1}}),\ t^{k\bm{b_{1}}},\ h(\bm{b_{1}})\mid k\in\N\>$ are three Heisenberg subalgebras of $\wtL$.\\
(2) $\{\bm{b_{1}},\ \bm{b_{2}}\}$ is a $\Z$-basis of $\Gamma$ if and only if
$\text{det}{\bm{b_{1}} \choose \bm{b_{2}}}=\pm1$.
\end{lem}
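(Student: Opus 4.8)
The plan is to prove both parts by direct computation, since the structure constants of $\wtL$ are explicit. The decisive observation for part~(1) is that any two elements indexed by integer multiples of the single vector $\bm{b_1}$ are ``parallel'', so that for all $a,b\in\Z$ one has
\[
\text{det}{a\bm{b_1} \choose b\bm{b_1}}=ab\,\text{det}{\bm{b_1} \choose \bm{b_1}}=0 .
\]
Consequently, in every bracket among the generators of the three proposed subalgebras the determinant term (the ``Virasoro-like'' part producing a new $E$ or $t^{\bm{m}}$) disappears, and only the central $\delta$-terms survive. This immediately shows each span is closed under the bracket and that its bracket lands in the one-dimensional center spanned by $f(\bm{b_1})$ or $h(\bm{b_1})$.

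For the first span $\<E(\pm k\bm{b_1}),\,f(\bm{b_1})\mid k\in\N\>$ I would set $a_k=E(k\bm{b_1})$ for $k\in\Z\setminus\{0\}$ and compute
\[
[a_k,a_l]=\delta_{k+l,0}\,f(k\bm{b_1})=k\,\delta_{k+l,0}\,f(\bm{b_1}),
\]
using $f(k\bm{b_1})=k f(\bm{b_1})$; since $f(\bm{b_1})$ is central (being a combination of $K_3,K_4$) this is exactly the defining relation of the infinite-dimensional Heisenberg algebra with central element $f(\bm{b_1})$. For the second span I would put $x_k=E(k\bm{b_1})$, $y_k=t^{-k\bm{b_1}}$, $c=h(\bm{b_1})$ and check $[x_k,x_l]=[y_k,y_l]=0$ and $[x_k,y_l]=k\,\delta_{k,l}\,c$, which is again the Heisenberg relation (the identification with the standard model being $x_k\mapsto a_k$, $y_k\mapsto a_{-k}$); the third span is handled identically with the roles of positive and negative multiples exchanged. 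The only points requiring a word of care — and the closest thing to an obstacle — are verifying that the center is genuinely nonzero, i.e.\ that $f(\bm{b_1})\neq0$ and $h(\bm{b_1})\neq0$ (which follows from $\bm{b_1}\neq\bm{0}$), and keeping the $\delta$-indices straight so that the pairing of $E(k\bm{b_1})$ with $t^{-k\bm{b_1}}$ matches on the same index $k$.

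Part~(2) is a standard fact about the free $\Z$-module $\Gamma\cong\Z^2$. Writing $P=\begin{pmatrix} b_{11} & b_{12}\\ b_{21} & b_{22}\end{pmatrix}$ for the matrix expressing $\bm{b_1},\bm{b_2}$ in the basis $\bm{e_1},\bm{e_2}$, the set $\{\bm{b_1},\bm{b_2}\}$ is a $\Z$-basis of $\Gamma$ precisely when $P\in GL_{2\times2}(\Z)$. If this holds then $\det P\cdot\det P^{-1}=1$ with both factors integers, forcing $\det P=\pm1$. Conversely, if $\det P=\pm1$ then $P^{-1}=(\det P)^{-1}\,\mathrm{adj}(P)$ has integer entries, so $\bm{e_1},\bm{e_2}$ lie in $\Z\bm{b_1}+\Z\bm{b_2}$; hence $\bm{b_1},\bm{b_2}$ generate $\Gamma$, and two elements generating a rank-two free module form a basis. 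I do not anticipate any difficulty in this part.
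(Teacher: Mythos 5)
Your proof is correct, and it matches the paper's intent: the paper offers no written argument (the lemma is stated as ``easy to check''), and your direct computation — noting that $\det{a\bm{b_1} \choose b\bm{b_1}}=0$ kills all non-central terms, reducing every bracket to the Heisenberg relations, plus the standard $GL_{2\times2}(\Z)$ determinant argument for part (2) — is exactly the verification the authors leave to the reader.
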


Now we recall some definitions related to the Harish-Chandra modules for $\wtL$.
A {\em weight module} of $\wtL$ is a module $V$ with weight space decomposition:
\[V=\oplus_{\bm{\lambda}\in \C^6}V_{\bm{\lambda}},\]
where $V_{\bm{\lambda}}=\{v\in V\mid d_{i}v=\lambda_{i}v,\ K_{j}v=\lambda_{j+2}v,\ i=1,2,\ j=1,2,3,4\}$
and $\bm{\lambda}=(\lambda_{1},\cdots,\lambda_{6})\in\C^{6}$. A weight module is called {\em quasi-finite} if
all weight spaces $V_{\bm{\lambda}}$ are finite dimensional. Furthermore, if there exists a positive integer $N$
such that dim $V_{\bm{\lambda}}\leq N$ for all $\bm{\lambda}\in\C^6$, we call such modules are {\em uniformly bounded modules}.
An irreducible quasi-finite weight module is called a {\em Harish-Chandra module}.
Note that the centers $K_{1},K_{2},K_{3},K_{4}$ of $\wtL$ act on an irreducible
weight module $V$ as scalars, i.e., $K_{i}.v=c_{i}v$ for all $v\in V$, $c_i\in\C$, $i=1,2,3,4$.
And we call the ordered number $(c_{1}, c_{2}, c_{3}, c_{4})$ the {\em level} of the module $V$.
We write $V_{(\lambda_{1}, \lambda_{2})}$ instead of $V_{(\lambda_{1},\cdots,\lambda_{6})}$ if the level is fixed.
For a weight module $V$, we define the weight set of $V$ by $\mathcal{P}(V)=\{\bm{\lambda}\in \C^{2}\mid V_{\bm{\lambda}}\neq0\}$.
One can easily see that there exist $\lambda_{1},
\lambda_{2}\in\C$ such that $\mathcal{P}(V)\subseteq(\lambda_{1}, \lambda_{2})+\Gamma$ for an irreducible
$\wtL$-module $V$. If there exists a $\Z$-basis $B=\{\bm{b_{1}}, \bm{b_{2}}\}$ of $\Gamma$ and
$0\neq v_{\bm{\lambda}}\in V_{\bm{\lambda}}$ such that $V=\mathcal{U}(\wtL)v_{\bm{\lambda}}$
and $E(\bm{m})v_{\bm{\lambda}}=t^{\bm{m}}v_{\bm{\lambda}}=0$,
$\forall\bm{m}\in \Z_{+}\bm{b_{1}}+\Z_{+}\bm{b_{2}}$, we call $V$ a {\em generalized
highest weight (GHW for short) module} with GHW $\bm{\lambda}$ corresponding to the $\Z$-basis $B$. The nonzero
vector $v_{\bm{\lambda}}$ is called a {\em GHW vector corresponding to the $\Z$-basis $B$}, or simply {\em GHW vector}.

Let $\{\bm{b_{1}}, \bm{b_{2}}\}$ be a $\Z$-basis of $\Gamma$ and let
$\mathcal{H}_{\bm{b_{1}}}=\<E(k\bm{b_{1}}),\ t^{k\bm{b_{1}}},\ K_i\mid k\in\Z\setminus\{0\},\ i=1,2,3,4\>$. Denote
\[\wtL_0=\mathcal{H}_{\bm{b_{1}}}\oplus\C d_{1}\oplus\C d_{2},\]
\[\wtL_i=\<E(m\bm{b_{1}}+i\bm{b_{2}}), t^{m\bm{b_{1}}+i\bm{b_{2}}}\mid m\in\Z\>,\  i\neq0,\]
\[\wtL_+=\oplus_{i>0}\wtL_i,\ \wtL_{-}=\oplus_{i<0}\wtL_i.\]
Then $\wtL=\wtL_{+}\oplus\wtL_{0}\oplus\wtL_{-}$. Let $V$ be an irreducible weight $\wtL_{0}$-module.
We extend the $\wtL_{0}$-module structure on $V$ to a $(\wtL_{+}\oplus\wtL_{0})$-module structure by
defining $\wtL_{+}.V=0$. Then we obtain the induced $\wtL$-module
\[\wtM(V)=\wtM(\bm{b_{1}}, \bm{b_{2}}, V)=\text{Ind}_{\wtL_{+}\oplus\wtL_{0}}^{\wtL}V
=\U(\wtL)\otimes_{\U(\wtL_{+}\oplus\wtL_{0})}V.\]
It is clear that, as vector spaces, $\wtM(\bm{b_{1}}, \bm{b_{2}}, V)\simeq\U(\wtL_{-})\otimes_{\C}V$.
The $\wtL$-module $\wtM(\bm{b_{1}}, \bm{b_{2}}, V)$ has a unique maximal submodule
$J(\bm{b_{1}}, \bm{b_{2}}, V)$ trivially intersecting with $V$. Then we obtain the unique irreducible quotient module
\[M(V)=M(\bm{b_{1}}, \bm{b_{2}}, V)=
\wtM(\bm{b_{1}}, \bm{b_{2}}, V)/J(\bm{b_{1}}, \bm{b_{2}}, V).\]
It is clear that $M(V)$ is uniquely determined by the $\Z$-basis $\{\bm{b_{1}}, \bm{b_{2}}\}$ of
$\Gamma$ and the $\wtL_{0}$-module $V$.

\begin{rmk}
The irreducible $\wtL$-module $M(\bm{b_{1}}, \bm{b_{2}}, V)$ constructed above is
a GHW module corresponding to the $\Z$-basis $\{\bm{b_1}+\bm{b_2},\bm{b_1}+2\bm{b_2}\}$ of $\Gamma$.
\end{rmk}

Now we recall some results about the $\Z$-graded module for Heisenberg Lie algebras.

For any $0\neq\bm{b_{1}}\in\Gamma$, denote the subalgebra $\<E(\pm k\bm{b_{1}}),\ f(\bm{b_{1}})\mid k\in\N\>$
of $\wtL$ by $E_{\bm{b_{1}}}$. For any $E_{\bm{b_{1}}}$-module $V$,
if the eigenvalue of $f(\bm{b_{1}})$ is a scalar then we call it the {\em level} of $V$. Let
\[E_{\bm{b_{1}}}^{\pm}=\<E(k\bm{b_{1}})\mid\pm k\in \N\>.\]
For $0\neq a\in \C$, let $\C v_{a}$ be a one dimensional
$(E_{\bm{b_{1}}}^{\varepsilon}\oplus \C f(\bm{b_{1}}))$-module such that
$E_{\bm{b_{1}}}^{\varepsilon}.v_{a}=0$, $f(\bm{b_{1}}).v_{a}=av_{a}$, $\varepsilon\in \{+, -\}$. Consider the induced $E_{\bm{b_{1}}}$-module
\[M^{\varepsilon}(a)
=\U(E_{\bm{b_{1}}})\otimes_{\U(E_{\bm{b_{1}}}^{\varepsilon}\oplus\C f(\bm{b_{1}}))}\C v_{a}\]
associated with the $a$ and $\varepsilon$ ($a$ is the level of $M^{\varepsilon}(a)$). Then the $E_{\bm{b_{1}}}$-module
$M^{\varepsilon}(a)$ is irreducible.

The following results was due to the Propositions $4.3(\mbox{i})$ and the Proposition $4.5$ in \cite{F}.

\begin{thm}\label{nzlham}
If $V=\oplus_{i\in\Z}V_{i}$ is a $\Z$-graded $E_{\bm{b_{1}}}$-module
of level $0\neq a\in\C$ and dim $V_{i}<\infty$ for at least one $i\in\Z$ then\\
(1). If $V$ is an irreducible module then $V\simeq M^{\varepsilon}(a)$ for some $\varepsilon\in\{+, -\}$;\\
(2). $V$ is completely reducible.
\end{thm}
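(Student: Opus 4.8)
\emph{Proof strategy.} The plan is to first record the structure of $E_{\bm{b_{1}}}$. Since $\text{det}{l\bm{b_{1}} \choose k\bm{b_{1}}}=0$, the bracket reads $[E(k\bm{b_{1}}),E(l\bm{b_{1}})]=\delta_{k+l,0}\,k\,f(\bm{b_{1}})$, so $E_{\bm{b_{1}}}$ is the infinite-dimensional Heisenberg algebra with centre $f(\bm{b_{1}})$, and I view it as $\Z$-graded by assigning $E(k\bm{b_{1}})$ the degree $k$; its two abelian halves are $E_{\bm{b_{1}}}^{+}$ and $E_{\bm{b_{1}}}^{-}$, while $f(\bm{b_{1}})$ acts on $V$ by the scalar $a\neq0$. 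With this in hand, part (1) reduces to producing a single nonzero vector killed by all of $E_{\bm{b_{1}}}^{+}$ or all of $E_{\bm{b_{1}}}^{-}$: if $0\neq v\in V$ satisfies $E_{\bm{b_{1}}}^{+}v=0$ then, since $f(\bm{b_{1}})v=av$, the universal property of the induced module gives a nonzero homomorphism $M^{+}(a)\to V$, $v_{a}\mapsto v$; because $M^{+}(a)$ is irreducible and $V$ is irreducible this map is an isomorphism, and symmetrically $E_{\bm{b_{1}}}^{-}v=0$ yields $V\simeq M^{-}(a)$.

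To produce such a vector I would exploit the finite-dimensional component $V_{N}$. For each $k>0$ the degree-zero operator $E(-k\bm{b_{1}})E(k\bm{b_{1}})$ preserves $V_{N}$, and for distinct $k$ these operators commute (the only nonzero brackets occur between $E(k\bm{b_{1}})$ and $E(-k\bm{b_{1}})$); hence on the finite-dimensional space $V_{N}$ they admit a common eigenvector $w$, say $E(-k\bm{b_{1}})E(k\bm{b_{1}})w=\mu_{k}w$ for all $k>0$. The relation $E(k\bm{b_{1}})E(-k\bm{b_{1}})-E(-k\bm{b_{1}})E(k\bm{b_{1}})=ka$ then shows that $E(\pm k\bm{b_{1}})$ shift the corresponding eigenvalue by $\mp ka$, so the family behaves like a collection of commuting oscillators sharing one $\Z$-grading. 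The principal obstacle is exactly here: each single oscillator need not terminate, so one must use the fact that all of them feed the \emph{same} grading, together with $\dim V_{N}<\infty$ and the irreducibility of $V$, to force the weight support of $V$ to be bounded above or below. Once boundedness in one direction is established, the extreme graded piece supplies a vector annihilated by the whole of $E_{\bm{b_{1}}}^{+}$ (resp.\ $E_{\bm{b_{1}}}^{-}$), completing (1).

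For part (2) I would argue that $V$ equals the sum of its irreducible submodules, since any module that is a sum of simple submodules is automatically completely reducible. A nonzero graded submodule $W$ again has $\dim W_{N}<\infty$, so by the mechanism of (1) it contains a highest- or lowest-weight vector and hence a copy of some $M^{\varepsilon}(a)$; thus the socle $S$ (the sum of all such Fock submodules) is nonzero, and both $\U(E_{\bm{b_{1}}}^{-})\,\Omega^{+}$ and $\U(E_{\bm{b_{1}}}^{+})\,\Omega^{-}$ lie in $S$, where $\Omega^{\pm}=\{v\mid E_{\bm{b_{1}}}^{\pm}v=0\}$. It remains to prove $S=V$. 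Assuming $S\neq V$, the quotient $V/S$ is graded with finite-dimensional components and therefore carries a highest-weight image $\bar v$ with $E_{\bm{b_{1}}}^{+}\bar v=0$; lifting to $v\in V$ gives $E(k\bm{b_{1}})v\in S$ for all $k>0$, and the task is to correct $v$ by an element of $S$ so that it lands in $\Omega^{+}$.

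Here the nonvanishing of the level is decisive: since $f(\bm{b_{1}})$ acts by $a\neq0$, the commutation relations make $E_{\bm{b_{1}}}^{+}$ act on the Fock submodules comprising $S$ with invertible ``number'' behaviour, which I would use to solve the compatible system $E(k\bm{b_{1}})u=E(k\bm{b_{1}})v$ $(k>0)$ for some $u\in S$; then $v-u\in\Omega^{+}$ generates a Fock submodule contained in $S$, forcing $v\in S$, a contradiction. Hence $S=V$ and $V$ is completely reducible with Fock constituents. I expect this splitting step — equivalently, the vanishing of self-extensions of the modules $M^{\varepsilon}(a)$ in the graded category — to be the principal technical difficulty of part (2), mirroring the one-sided boundedness that is the crux of part (1).
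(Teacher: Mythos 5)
The paper offers no proof of this theorem: it is quoted verbatim from Futorny \cite{F} (Propositions 4.3(i) and 4.5), so the benchmark is that cited argument, whose overall shape your outline correctly reproduces. Your reductions are sound as far as they go: the bracket computation, the identification of $E_{\bm{b_{1}}}$ as a Heisenberg algebra, the observation that a vector $v\neq0$ with $E_{\bm{b_{1}}}^{+}v=0$ (or $E_{\bm{b_{1}}}^{-}v=0$) forces $V\simeq M^{\varepsilon}(a)$ via irreducibility of the Fock module, the commutativity of the family $E(-k\bm{b_{1}})E(k\bm{b_{1}})$ on $V_{N}$, and the eigenvalue shifts by $\mp ka$ are all correct. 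But the core of part (1) --- deducing that the support of $V$ is bounded on one side from $\dim V_{N}<\infty$ --- is exactly where you stop, explicitly labelling it ``the principal obstacle'' without resolving it. And it genuinely is the whole content of the statement: once you apply $E(k\bm{b_{1}})$ to your common eigenvector $w\in V_{N}$ you land in $V_{N+k}$, about which the hypothesis says nothing (only \emph{one} graded piece is assumed finite-dimensional), and since $a$ is an arbitrary nonzero complex scalar there is no positivity or unitarity to make any single oscillator ladder $\mu_{k},\,\mu_{k}-ka,\,\mu_{k}-2ka,\dots$ terminate. The eigenvalue bookkeeping you set up therefore produces no contradiction by itself; an actual proof must play all the oscillators simultaneously against the single finite-dimensional component, and no such argument is given.

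Part (2) inherits this gap and adds two of its own. First, your claim that a nonzero graded submodule $W$ ``again has $\dim W_{N}<\infty$, so by the mechanism of (1) it contains a highest- or lowest-weight vector'' fails at the degenerate case: $W_{N}\subseteq V_{N}$ is finite-dimensional but may well be zero (e.g.\ a Fock submodule whose support misses degree $N$), and the hypothesis gives finiteness at no other degree, so the mechanism of (1) --- even granting it --- cannot be run inside $W$ without a separate control of supports; the same objection applies to the unsupported assertion that $V/S$ has finite-dimensional components. Second, the splitting step --- solving $E(k\bm{b_{1}})u=E(k\bm{b_{1}})v$ for $u\in S$, i.e.\ the vanishing of the relevant graded self- and cross-extensions of the modules $M^{\varepsilon}(a)$ --- is again only ``expected,'' yet it is precisely the content of Futorny's Proposition 4.5. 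In sum, your proposal is a correct identification of the standard proof architecture, but both crux points (one-sided boundedness in (1), Ext-vanishing in (2)) are named rather than proved, so what is written does not constitute a proof of the theorem.
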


Let $\{\bm{b_{1}}, \bm{b_{2}}\}$ be a $\Z$-basis of $\Gamma$. For a $\mathcal{H}_{\bm{b_{1}}}$-module $V$,
if $f(\bm{b_1}),h(\bm{b_1}),f(\bm{b_2}),h(\bm{b_2})$
act as scalars $c_1,c_2,c_3,c_4\in\C$, then we call $(c_1,c_2,c_3,c_4)$ the {\em level} of the $\mathcal{H}_{\bm{b_{1}}}$-module $V$.
Furthermore if $(c_1,c_2,c_3,c_4)=(0,0,c_3,c_4)$, we say that $V$ is a $\mathcal{H}_{\bm{b_{1}}}$-module of {\em level zero}. Otherwise,
$V$ is nonzero level. In the following, we will discuss the irreducible $\mathcal{H}_{\bm{b_{1}}}$-modules.
First we recall the classification of $\Z$-graded irreducible $\mathcal{H}_{\bm{b_{1}}}$-modules of level zero.
Then we classify the $\Z$-graded $\mathcal{H}_{\bm{b_{1}}}$-modules of nonzero level with finite-dimensional graded subspaces.

Set $T=\C[t^{\pm1}]$. Let $\rho:\mathcal{H}_{\bm{b_{1}}}\rightarrow\C$ be a linear function with $\rho(f(\bm{b_1}))=\rho(h(\bm{b_1}))=0$.
We can define a $\mathcal{H}_{\bm{b_{1}}}$-module structure on $T$ by
\begin{eqnarray}f(\bm{b_1}).t^n=0,\ E(k\bm{b_1}).t^n=\rho(E(k\bm{b_1}))t^{k+n}\label{zgm}\\
h(\bm{b_1}).t^n=0,\ t^{k\bm{b_1}}.t^n=\rho(t^{k\bm{b_1}})t^{k+n},\\
f(\bm{b_2}).t^n=\rho(f(\bm{b_2}))t^n,\ h(\bm{b_2}).t^n=\rho(h(\bm{b_2}))t^n,
\end{eqnarray}
$n\in\Z$, $k\in\Z\setminus\{0\}$. We denote \[T_{\rho,i}(\mathcal{H}_{\bm{b_{1}}})=\U(\mathcal{H}_{\bm{b_{1}}}).t^i\] be the
$\mathcal{H}_{\bm{b_{1}}}$-submodule of $T$ generated by $t^i$ for $i\in\Z$.
And we denote $T_{\rho,0}(\mathcal{H}_{\bm{b_{1}}})$ by $T_{\rho}(\mathcal{H}_{\bm{b_{1}}})$ for short. From the definition,
we see that \begin{align}T_{\rho,i}(\mathcal{H}_{\bm{b_{1}}})\simeq T_{\rho,j}(\mathcal{H}_{\bm{b_{1}}})\end{align} for $i,j\in\Z$ as
$\mathcal{H}_{\bm{b_{1}}}$-modules.

\begin{rmk}
For linear function $\rho: E_{\bm{b_{1}}}\rightarrow\C$ with $\rho(f(\bm{b_1}))=0$, we can define
a $E_{\bm{b_{1}}}$-module structure on the Laurent polynomial ring $T$ with the action
given by (\ref{zgm}). Similarly, let $T_{\rho,i}(E_{\bm{b_1}}):=\U(E_{\bm{b_{1}}}).t^i$ be
the $E_{\bm{b_{1}}}$-submodule of $T$ generated by $t^i$ for $i\in\Z$.
And we also write $T_{\rho,0}(E_{\bm{b_1}})$ by $T_{\rho}(E_{\bm{b_1}})$ for short.
\end{rmk}

Then we have the following results from the Lemma $3.6$ and the Proposition $3.8$ in \cite{C}.

\begin{prop}\label{zlimoha}
(1). The $\mathcal{H}_{\bm{b_{1}}}$-module $T_{\rho}(\mathcal{H}_{\bm{b_{1}}})$
(resp. $E_{\bm{b_{1}}}$-module $T_{\rho}(E_{\bm{b_1}})$) is irreducible if and only if $T_{\rho}(\mathcal{H}_{\bm{b_{1}}})=T_r$
(resp. $T_{\rho}(E_{\bm{b_1}})=T_r$) for some $r\in\Z_+$, where $T_0=\C1$ and $T_r=\C[t^r, t^{-r}]$ if $r\in\N$.\\
(2). If $V$ is a $\Z$-graded irreducible $\mathcal{H}_{\bm{b_{1}}}$-module (resp. $E_{\bm{b_{1}}}$-module) of level zero,
then $V\simeq T_{\rho}(\mathcal{H}_{\bm{b_{1}}})$ for some
linear function $\rho: \mathcal{H}_{\bm{b_{1}}}\rightarrow\C$ with $\rho(f(\bm{b_1}))=\rho(h(\bm{b_1}))=0$
(resp. $V\simeq T_{\rho}(E_{\bm{b_1}})$ for some linear function $\rho: E_{\bm{b_{1}}}\rightarrow\C$ with $\rho(f(\bm{b_1}))=0$),
and $T_{\rho}(\mathcal{H}_{\bm{b_{1}}})=T_r$ (resp. $T_{\rho}(E_{\bm{b_1}})=T_r$) for some $r\in\Z_+$.
\end{prop}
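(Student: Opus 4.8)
The plan is to build everything out of the level-zero structure. Because $\rho(f(\bm{b_1}))=\rho(h(\bm{b_1}))=0$, the action formulas \eqref{zgm} show that on $T$ the elements $E(k\bm{b_1})$ and $t^{k\bm{b_1}}$ all act as scaled shift operators $t^n\mapsto (\text{scalar})\,t^{k+n}$, while $f(\bm{b_1}),h(\bm{b_1})$ act by $0$ and $f(\bm{b_2}),h(\bm{b_2})$ act by the fixed scalars $\rho(f(\bm{b_2})),\rho(h(\bm{b_2}))$. In particular all the brackets coming from central terms (the $\delta_{\bm{m}+\bm{n},0}$ contributions) are killed, so $\mathcal{H}_{\bm{b_1}}$ acts through a commutative quotient and the structure is essentially that of a module over a polynomial algebra of shift operators. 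I would record this explicitly as the first step, since it reduces the whole problem to a combinatorial question about which $\Z$-graded subspaces of $T$ are closed under the shifts $t^n\mapsto t^{n+k}$ with prescribed nonvanishing of the coefficients.

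\emph{Proof of (1).} For the forward direction I would analyze the submodule $T_\rho(\mathcal{H}_{\bm{b_1}})=\U(\mathcal{H}_{\bm{b_1}}).t^0$ directly. Let $S=\{k\in\Z\mid t^k\in T_\rho(\mathcal{H}_{\bm{b_1}})\}$; since $t^0\in S$ and $S$ is closed under the shifts by those $k$ for which $\rho(E(k\bm{b_1}))$ or $\rho(t^{k\bm{b_1}})$ is nonzero, $S$ is a subsemigroup-type subset of $\Z$ containing $0$. A short argument shows $T_\rho(\mathcal{H}_{\bm{b_1}})$ is spanned by $\{t^k\mid k\in S\}$ and that $S$ is in fact a subgroup $r\Z$ of $\Z$ for a unique $r\in\Z_+$, whence $T_\rho(\mathcal{H}_{\bm{b_1}})=T_r$. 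Irreducibility of $T_r$ then follows because any nonzero graded submodule contains some $t^{mr}$ and the shift operators with nonzero coefficients generate all of $T_r$ from it; conversely if $T_\rho(\mathcal{H}_{\bm{b_1}})$ is not of the form $T_r$ then $S$ is a proper subset that is not a subgroup, and one produces a proper nonzero submodule, contradicting irreducibility. The $E_{\bm{b_1}}$-case is the same argument with $t^{k\bm{b_1}}$ and $h(\bm{b_1})$ deleted. This is the place where I would cite Lemma $3.6$ and Proposition $3.8$ of \cite{C} to supply the precise semigroup computation rather than redo it.

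\emph{Proof of (2).} Let $V=\oplus_{i}V_i$ be a $\Z$-graded irreducible module of level zero. Fix a nonzero homogeneous $v$; since $f(\bm{b_2}),h(\bm{b_2})$ are central-like and act as scalars, and $f(\bm{b_1}),h(\bm{b_1})$ act as $0$ (level zero), I would define $\rho$ on $\mathcal{H}_{\bm{b_1}}$ by reading off the coefficients with which each $E(k\bm{b_1})$, $t^{k\bm{b_1}}$ moves $v$ up one graded step (using that each graded piece is forced to be at most one-dimensional by irreducibility together with the commuting shift action). The resulting linear map intertwines $V$ with $T_\rho(\mathcal{H}_{\bm{b_1}})$, and irreducibility plus part (1) forces $T_\rho(\mathcal{H}_{\bm{b_1}})=T_r$. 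Again the $E_{\bm{b_1}}$-statement is identical after dropping the torus generators.

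The main obstacle I anticipate is \emph{not} the algebra but the bookkeeping in step (1): pinning down that the support set $S$ is genuinely a subgroup $r\Z$ and not merely a shift-closed subset, and checking that the graded pieces of an irreducible $V$ are one-dimensional so that $\rho$ is well defined. Both hinge on the fact that the $E(k\bm{b_1})$ and $t^{k\bm{b_1}}$ act as commuting invertible-up-to-scalar shifts whenever their $\rho$-values are nonzero; controlling which $\rho$-values vanish is exactly the content imported from \cite{C}, so I would lean on that citation to close the gap cleanly rather than reprove the semigroup classification from scratch.
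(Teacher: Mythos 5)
The paper contains no internal proof of this proposition: it is imported verbatim, via the sentence preceding its statement, from Lemma $3.6$ and Proposition $3.8$ of \cite{C}. So your proposal is being compared against a citation rather than an argument, and as a reconstruction of what lies behind that citation it is essentially correct: the reduction to a commutative algebra of scaled shift operators (the central terms $\delta_{\bm{m}+\bm{n},0}h(\bm{m})$, $\delta_{\bm{m}+\bm{n},0}f(\bm{m})$ die because $\rho(f(\bm{b_1}))=\rho(h(\bm{b_1}))=0$, and $\det{k\bm{b_1} \choose l\bm{b_1}}=0$ kills the non-central terms), the support-set analysis, and the final appeal to \cite{C} for the two genuinely technical points are exactly the standard route, and delegating those points to \cite{C} is precisely what the paper itself does.

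Two wrinkles in your write-up deserve flagging. First, your unconditional claim in (1) that the support $S$ ``is in fact a subgroup $r\Z$'' is false as stated: $S$ is only the submonoid generated by $D=\{k\mid\rho(E(k\bm{b_1}))\neq0\ \text{or}\ \rho(t^{k\bm{b_1}})\neq0\}$, and for $D=\{1\}$ one gets $S=\Z_+$, $T_{\rho}(\mathcal{H}_{\bm{b_{1}}})=\C[t]$, which is reducible. The subgroup property is a \emph{consequence} of irreducibility (your own converse sentence contains the right mechanism: if $s\in S$ but $-s\notin S$, the span of $\{t^{s+k}\mid k\in S\}$ is a proper nonzero submodule since the shift coefficients do not depend on position), so the claim should be asserted only under that hypothesis; as a whole your two sentences do close the logic, but the order of quantifiers matters. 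Second, irreducibility here must be read in the graded category: for $r\in\N$ the module $T_r$ has ungraded proper submodules, e.g. $(t^r-1)T_r$ is stable under all the scaled shifts and under the scalars $\rho(f(\bm{b_2})),\rho(h(\bm{b_2}))$, so the ``if'' direction of (1) fails for ungraded submodules. You implicitly work in the graded category (``any nonzero graded submodule contains some $t^{mr}$''), which matches the paper's usage, since these modules are later promoted to weight $\wtL_0$-modules where graded submodules coincide with submodules; it would be worth one explicit sentence. Finally, in (2) note that no finite-dimensionality of graded pieces is assumed, so the one-dimensionality claim needs a countable-dimension Schur argument, and the well-definedness of $\rho$ (that the structure constants $c_x(j)$ can be normalized independent of the graded position $j$, which is what makes $u\,t^0\mapsto u\,v$ well defined) is nontrivial; both are the actual content of Chari's Lemma $3.6$ and Proposition $3.8$, and your explicit reliance on \cite{C} there is legitimate.
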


\begin{rmk}
Since $\<t^{k\bm{b_1}},E(-k\bm{b_1}),h(\bm{b_1})\mid k\in\N\>$ and
$\<t^{-k\bm{b_1}},E(k\bm{b_1}),h(\bm{b_1})\mid k\in\N\>$ are two Heisenberg Lie subalgebras of $\wtL$.
The Theorem \ref{nzlham} and the Proposition \ref{zlimoha} also hold for their corresponding $\Z$-graded irreducible modules.
\end{rmk}

For convenience, we denote $\mathcal{E}_{\bm{b_1}}$ the set of all linear functions $\rho:\mathcal{H}_{\bm{b_{1}}}\rightarrow\C$
with $\rho(f(\bm{b_1}))=\rho(h(\bm{b_1}))=0$ such that the $\mathcal{H}_{\bm{b_{1}}}$-module
$T_{\rho}(\mathcal{H}_{\bm{b_{1}}})$ is irreducible.
Let $t_{\bm{b_1}}=\<t^{\pm k\bm{b_1}}\mid k\in\N\>$. Then $t_{\bm{b_1}}$ is a subalgebra of $\wtL$.

\begin{prop}\label{zgimfh}
Let $V=\bigoplus_{i\in\Z}V_{i}$ be a $\Z$-graded irreducible $\mathcal{H}_{\bm{b_{1}}}$-module
with $\text{dim}\ V_{i}<\infty$ for all $i\in\Z$. Suppose that $f(\bm{b_{1}}).v=c_{1}v$,
$h(\bm{b_{1}}).v=c_{2}v$, $f(\bm{b_{2}}).v=c_{3}v$ and $h(\bm{b_{2}}).v=c_{4}v$ for $v\in V$,
where $c_1, c_2,c_3,c_4\in\C$ and $(c_{1}, c_{2})\neq\bm{0}$.\\
(1). If $c_{1}\neq0$ and $c_{2}\neq0$, then \[V\simeq \U(\mathcal{H}_{\bm{b_{1}}})
\otimes_{\U(\<E(k\bm{b_{1}}),\ t^{k\bm{b_{1}}},\ f(\bm{b_{i}}),\ h(\bm{b_{i}})\mid k\in\N,\ i=1,2\>)}\C1,\]
where $\<E(k\bm{b_{1}}),\ t^{k\bm{b_{1}}}\mid k\in \N\>.1=0$, $f(\bm{b_{1}}).1=c_11$, $h(\bm{b_{1}}).1=c_21$,
$f(\bm{b_{2}}).1=c_31$ and $h(\bm{b_{2}}).1=c_41$ or
\[V\simeq \U(\mathcal{H}_{\bm{b_{1}}})
\otimes_{\U(\<E(-k\bm{b_{1}}),\ t^{-k\bm{b_{1}}},\ f(\bm{b_{i}}),\ h(\bm{b_{i}})\mid k\in \N,\ i=1,2\>)}\C1,\]
where $\<E(-k\bm{b_{1}}),\ t^{-k\bm{b_{1}}}\mid k\in \N\>.1=0$, $f(\bm{b_{1}}).1=c_11$, $h(\bm{b_{1}}).1=c_21$,
$f(\bm{b_{2}}).1=c_31$ and $h(\bm{b_{2}}).1=c_41$ .\\
(2). If $c_{1}\neq0$ and $c_{2}=0$, then \[V\simeq T_{\rho}(t_{\bm{b_1}})\otimes M^{\varepsilon}(c_{1}),\]
for some linear function $\rho: t_{\bm{b_1}}\rightarrow\C$,
such that $T_{\rho}(t_{\bm{b_1}})=T_r$ for some $r\in\Z_+$,
where $M^{\varepsilon}(c_{1})$ is the irreducible $E_{\bm{b_{1}}}$-module of level $c_1$, $\varepsilon\in\{+, -\}$.\\
(3). If $c_{1}=0$ and $c_{2}\neq0$, then \[V\simeq \U(\mathcal{H}_{\bm{b_{1}}})
\otimes_{\U(\<E(k\bm{b_{1}}),\ t^{k\bm{b_{1}}},\ f(\bm{b_{i}}),\ h(\bm{b_{i}})\mid k\in \N,\ i=1,2\>)}\C1,\]
where $\<E(k\bm{b_{1}}),\ t^{k\bm{b_{1}}}\mid k\in \N\>.1=0$, $f(\bm{b_1}).1=0$, $h(\bm{b_1}).1=c_21$,
$f(\bm{b_{2}}).1=c_31$ and $h(\bm{b_{2}}).1=c_41$ or
\[V\simeq \U(\mathcal{H}_{\bm{b_{1}}})
\otimes_{\U(\<E(-k\bm{b_{1}}),\ t^{-k\bm{b_{1}}},\ f(\bm{b_{i}}),\ h(\bm{b_{i}})\mid k\in \N,\ i=1,2\>)}\C1,\]
where $\<E(-k\bm{b_{1}}),\ t^{-k\bm{b_{1}}}\mid k\in \N\>.1=0$, $f(\bm{b_1}).1=0$, $h(\bm{b_1}).1=c_21$,
$f(\bm{b_{2}}).1=c_31$ and $h(\bm{b_{2}}).1=c_41$.
\end{prop}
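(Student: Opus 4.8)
The plan is to study the restriction of $V$ to suitable Heisenberg subalgebras of $\mathcal{H}_{\bm{b_1}}$ and then reconstruct $V$ from a multiplicity space, the whole analysis being organized by a single nondegeneracy dichotomy. Write $c_1=f(\bm{b_1})$ and $c_2=h(\bm{b_1})$ for the scalars through which these central elements act on $V$. A direct computation from Definition \ref{defHVart}, using $\det\binom{l\bm{b_1}}{k\bm{b_1}}=0$, shows that for each $k\in\N$ the only nonzero brackets among the degree-$k$ and degree-$(-k)$ generators of $\mathcal{H}_{\bm{b_1}}$ are
\[[E(k\bm{b_1}),E(-k\bm{b_1})]=kc_1,\quad [t^{k\bm{b_1}},E(-k\bm{b_1})]=kc_2,\quad [E(k\bm{b_1}),t^{-k\bm{b_1}}]=kc_2,\]
all other brackets among $\{E(\pm k\bm{b_1}),t^{\pm k\bm{b_1}}\}$ vanishing; in particular the degree-$k$ generators commute with one another, as do the degree-$(-k)$ ones. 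Hence the pairing between $(E(k\bm{b_1}),t^{k\bm{b_1}})$ and $(E(-k\bm{b_1}),t^{-k\bm{b_1}})$ is given by $k\begin{pmatrix}c_1 & c_2\\ c_2 & 0\end{pmatrix}$, of determinant $-k^2c_2^2$, so everything hinges on whether $c_2\neq 0$.

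Consider first $c_2\neq 0$, i.e.\ cases (1) and (3). Here the pairing is nondegenerate and I claim that on $V$ the algebra $\mathcal{H}_{\bm{b_1}}$ splits into two commuting Heisenberg subalgebras of nonzero level. In case (3) ($c_1=0$) these are $\mathcal{G}_1=\langle t^{k\bm{b_1}},E(-k\bm{b_1}),h(\bm{b_1})\mid k\in\N\rangle$ and $\mathcal{G}_2=\langle E(k\bm{b_1}),t^{-k\bm{b_1}},h(\bm{b_1})\mid k\in\N\rangle$, which commute because $c_1=0$ and each has level $c_2\neq 0$. In case (1) ($c_1\neq0$) I set $\tilde t^{\,\pm k}=t^{\pm k\bm{b_1}}-\tfrac{c_2}{c_1}E(\pm k\bm{b_1})$; these commute with every $E(l\bm{b_1})$ and satisfy $[\tilde t^{\,k},\tilde t^{\,-k}]=-k\tfrac{c_2^{2}}{c_1}$, so $E_{\bm{b_1}}$ (level $c_1$) and $\langle\tilde t^{\,\pm k}\rangle$ (level $-c_2^{2}/c_1$) are commuting Heisenberg subalgebras. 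In either case, Theorem \ref{nzlham} applied to the first factor (for $\mathcal{G}_1$ this is licensed by the Remark after Proposition \ref{zlimoha}) shows $V$ is a direct sum of copies of the corresponding $M^{\varepsilon}$; since $V$ is $\mathcal{H}_{\bm{b_1}}$-irreducible and the second factor preserves the isotypic decomposition, $V$ is isotypic of a single type, and, the graded endomorphism ring of $M^{\varepsilon}$ being $\C$, one gets $V\simeq M^{\varepsilon'}\otimes M^{\varepsilon}$ with the second factor acting on the multiplicity space, identified via Theorem \ref{nzlham} again. A comparison of supports forces $\varepsilon=\varepsilon'$: opposite polarizations would leave one factor bounded above and the other below, making every graded piece of the tensor product infinite-dimensional and contradicting $\dim V_i<\infty$. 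Tracing the definitions, the common choice $\varepsilon=+$ means the generating vector is annihilated by $E(k\bm{b_1})$ and $t^{k\bm{b_1}}$ for all $k\in\N$, so $V$ is the induced module of the first displayed form, while $\varepsilon=-$ gives the second.

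Now suppose $c_2=0$ and $c_1\neq0$, case (2). Then the relations above show that every $t^{k\bm{b_1}}$ commutes with all of $E_{\bm{b_1}}$, while $E_{\bm{b_1}}$ has level $c_1\neq0$. Restricting to $E_{\bm{b_1}}$ and invoking Theorem \ref{nzlham}, $V$ is completely reducible with all irreducible constituents isomorphic to a single $M^{\varepsilon}(c_1)$ (again a single $\varepsilon$, since the commuting operators $t^{k\bm{b_1}}$ preserve the $E_{\bm{b_1}}$-isotypic components and $V$ is irreducible). The abelian algebra $t_{\bm{b_1}}$ then acts on the multiplicity space $W$, making it a $\Z$-graded irreducible $t_{\bm{b_1}}$-module with finite-dimensional graded pieces and trivial level, so Proposition \ref{zlimoha} identifies $W$ with some $T_\rho(t_{\bm{b_1}})=T_r$. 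This yields $V\simeq T_\rho(t_{\bm{b_1}})\otimes M^{\varepsilon}(c_1)$, the finiteness of the graded pieces of $V$ being what pins down the admissible $\rho$.

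The routine part is the bracket bookkeeping and the verification that the stated subalgebras are Heisenberg; the main obstacle is the passage from ``$V$ restricted to a nonzero-level Heisenberg $\mathcal{K}$ is isotypic'' to the clean factorization $V\simeq W\otimes M^{\varepsilon}$ over $\mathcal{K}$ and its commuting complement. This rests on showing that $M^{\varepsilon}$ admits no nonzero $\mathcal{K}$-module endomorphism of nonzero degree (forcing the commuting operators to move between, rather than within, copies), together with the support argument that excludes mixed polarizations and thereby forces the single $\varepsilon$ in each case. In case (1) one must also check that the change of variables $\tilde t^{\,\pm k}$, defined only after fixing the central scalars, genuinely produces a Heisenberg algebra to which Theorem \ref{nzlham} applies.
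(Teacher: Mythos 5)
Your argument is correct, but for cases (1) and (3) it takes a genuinely different route from the paper. The paper only uses the tensor-factorization idea in case (2), where it decomposes $\mathcal{H}_{\bm{b_1}}=t_{\bm{b_1}}\oplus E_{\bm{b_1}}\oplus(\text{centers})$ and quotes Lemma~2.7 of [L] --- exactly the multiplicity-space argument you spell out. For (1) and (3) the paper instead argues by hand: it picks a vector $v_0$ killed by the $E(k\bm{b_1})$, $k\in\N$ (via Theorem \ref{nzlham}), forms the auxiliary submodule $W=\U(\<t^{k\bm{b_1}},E(l\bm{b_1}),\dots\>)v_0$, decomposes it over the twisted Heisenberg $\<t^{k\bm{b_1}},E(-k\bm{b_1}),h(\bm{b_1})\>$ into $M^{\pm}(c_2)$-pieces, rules out the wrong-type pieces by a freeness/locally-nilpotent-action contradiction, and in case (3) chases vectors (``repeating the process \dots terminates after finite steps'') until it finds one killed by both $E(k\bm{b_1})$ and $t^{k\bm{b_1}}$ for all $k\in\N$; the paper's final contradiction in (3) (a module free over $\U(\<E(m\bm{b_1})\>)$ with infinite-dimensional graded pieces) is precisely your mixed-polarization exclusion in disguise. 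Your approach uniformizes (1) and (3) under the single dichotomy $c_2\neq0$, and the change of variables $\tilde t^{\,\pm k}=t^{\pm k\bm{b_1}}-\frac{c_2}{c_1}E(\pm k\bm{b_1})$ in case (1) is a clean diagonalization not in the paper; it also makes the irreducibility of the induced modules transparent (by PBW they are exactly $M^{\varepsilon}\otimes M^{\varepsilon}$ over the two commuting Heisenbergs, a point the paper leaves implicit). What the paper's route buys is elementarity: it needs only the existence of one joint singular vector plus the universal property, with no appeal to isotypic components or Schur-type rigidity in these cases. Two small points to tidy in your write-up: Theorem \ref{nzlham} applies to $\<\tilde t^{\,\pm k}\>$ and to $\mathcal{G}_1$, $\mathcal{G}_2$ only after noting they are graded-isomorphic to $E_{\bm{b_1}}$ (the paper's remark after Proposition \ref{zlimoha} covers the twisted pairs, and your computation of $[\tilde t^{\,k},\tilde t^{\,-k}]=-k c_2^2/c_1\neq0$ covers the other); and Proposition \ref{zlimoha} is stated for $\mathcal{H}_{\bm{b_1}}$- and $E_{\bm{b_1}}$-modules, so in case (2) you should invoke the (easier) abelian analogue for $t_{\bm{b_1}}$ --- a graded-simple module over the commutative algebra $\U(t_{\bm{b_1}})$ is a graded field quotient, which is exactly $T_{\rho}(t_{\bm{b_1}})=T_r$; the paper uses this implicitly as well. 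Neither point is a gap in substance.
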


\begin{proof}
(1). If $c_{1}\neq0$ and $c_{2}\neq0$, by the Theorem \ref{nzlham}, we know that there exists some
$0\neq v_{0}\in V_{i_{0}}$ for some $i_{0}\in\Z$ such that $E(k\bm{b_{1}}).v_{0}=0$ for any $k\in \N$ or $-k\in\N$.
Without loss of generality, we assume $k\in \N$,
then $\U(\<E(-k\bm{b_{1}})\mid k\in\N\>)v_{0}$ is an irreducible $E_{\bm{b_{1}}}$-module. Let
\[W:=\U(\<t^{k\bm{b_{1}}},\ E(l\bm{b_{1}}),\ f(\bm{b_{1}}),\
h(\bm{b_{1}})\mid k\in \N,\ l\in \Z\setminus\{0\}\>)v_{0}\subseteq V\]
Note that $W$ as a $\Z$-graded $\<t^{k\bm{b_{1}}},\ E(-k\bm{b_{1}}),\ h(\bm{b_{1}})\mid k\in \N\>$-module is completely reducible,
Then we have that \[W=\big(\bigoplus_{i\in I}(\bigoplus_{m_i\in X_i}V^{+}_{i,m_i})\big)\oplus\big
(\bigoplus_{j\in J}(\bigoplus_{n_j\in Y_j}V_{j,n_j}^{-})\big),\]
where \[V_{i,m_i}^{+}=\U(\<t^{k\bm{b_{1}}},\ E(-k\bm{b_{1}}),\ h(\bm{b_{1}})\mid k\in \N\>)v_{i,m_i}\simeq M^{+}(c_{2}),\]
for some $0\neq v_{i,m_i}\in V_i\cap W$ with $t^{k\bm{b_{1}}}.v_{i,m_i}=0$ for all $k\in\N$, $i\in I$, $m_i\in X_i$,
\[V_{j,n_j}^{-}=\U(\<t^{k\bm{b_{1}}},\ E(-k\bm{b_{1}}),\ h(\bm{b_{1}})\mid k\in \N\>)u_{j,n_j}\simeq M^{-}(c_{2}),\]
where $0\neq u_{j,n_j}\in V_j\cap W$ with $E(-k\bm{b_{1}}).u_{j,n_j}=0$ for all $k\in\N$, $j\in J$, $n_j\in Y_j$, $I, J, X_i, Y_j\subseteq \Z$.
Note that $I$ has an upper bound, $J$ has a lower bound and
all $X_i,Y_j$ are finite sets since dim $V_n<\infty$ for all $n\in\Z$.
Assume $J\neq\emptyset$, then there exists some nonzero vector $w_{0}\in W\cap V_{i}$ such that $E(-k\bm{b_{1}}).w_{0}=0$ for all $k\in\N$
and some $i\in \Z$. Consider $W_{0}=\U(E_{\bm{b_1}})w_{0}\subseteq W$,
then \[W_{0}=\U(\<E(l\bm{b_{1}})\mid l\in\N\>).w_{0}\] and $W_{0}$
is a free $\mathcal{U}(\<E(l\bm{b_{1}})\mid l\in\N\>)$-module. On the other hand,
since $w_{0}\in W$, then there exists $k\in \N$ such that $E(k\bm{b_{1}}).w_{0}=0$, which is a contradiction.
Thus $J=\emptyset$ and $W=\bigoplus_{i\in I}(\bigoplus_{m_i\in X_i}V^{+}_{i,m_i})$.
Since $I$ has an upper bound, then there exists $0\neq u_{0}\in W\cap V_{i_1}$ for some $i_1\in\Z$
such that $E(k\bm{b_{1}}).u_{0}=t^{k\bm{b_{1}}}.u_{0}=0$
for all $k\in\N$. This shows that
\[V\simeq \U(\mathcal{H}_{\bm{b_{1}}})
\otimes_{\U(\<E(k\bm{b_{1}}),\ t^{k\bm{b_{1}}},\ f(\bm{b_{i}}),\ h(\bm{b_{i}})\mid k\in\N,\ i=1,2\>)} \C u_0.\] Another case is similar.\\
(2). If $c_{1}\neq0$ and $c_{2}=0$, we can write
\[\mathcal{H}_{\bm{b_{1}}}=t_{\bm{b_1}}\oplus E_{\bm{b_1}}\oplus\C f(\bm{b_2})\oplus\C h(\bm{b_1})\oplus\C h(\bm{b_2}).\]
From the Theorem \ref{nzlham}, the Proposition \ref{zlimoha} and the Lemma $2.7$ in \cite{L}, the result follows.\\
(3). If $c_{1}=0$ and $c_{2}\neq0$, by the Theorem \ref{nzlham}, $V$ is completely reducible when we view $V$ as the module of the two
subalgebras $\<t^{-k\bm{b_{1}}},\ E(k\bm{b_{1}}),\ h(\bm{b_{1}})\mid k\in \N\>$ and
$\<t^{k\bm{b_{1}}},\ E(-k\bm{b_{1}}),\ h(\bm{b_{1}})\mid k\in \N\>$. We write
\[V=\big(\bigoplus_{i\in I}(\bigoplus_{m_i\in X_i}V^{+}_{i,m_i})\big)\oplus\big
(\bigoplus_{j\in J}(\bigoplus_{n_j\in Y_j}V_{j,n_j}^{-})\big)\]
when viewed as the module of the Lie algebra $\<t^{-k\bm{b_{1}}},\ E(k\bm{b_{1}}),\ h(\bm{b_{1}})\mid k\in \N\>$,
where $I,J,X_i,Y_j\subseteq\Z$, \[V^{+}_{i,m_i}=\U(\<t^{-k\bm{b_{1}}},\ E(k\bm{b_{1}}),\ h(\bm{b_{1}})\mid k\in \N\>)v_{i,m_i}\simeq M^{+}(c_2)\]
with $E(k\bm{b_{1}}).v_{i,m_i}=0$ for all $k\in\N$, $i\in I$, $m_i\in X_i$ and $0\neq v_{i,m_i}\in V_i$,
\[V^{-}_{j,n_j}=\U(\<t^{-k\bm{b_{1}}},\ E(k\bm{b_{1}}),\ h(\bm{b_{1}})\mid k\in \N\>)u_{j,n_j}\simeq M^{-}(c_2)\]
with $t^{-k\bm{b_{1}}}.u_{j,n_j}=0$ for all $k\in\N$, $j\in J$, $n_j\in Y_j$ and $0\neq u_{j,n_j}\in V_j$. Similarly, we write
\[V=\big(\bigoplus_{i\in I^{\prime}}(\bigoplus_{p_i\in X_i^\prime}W^{+}_{i,p_i})\big)
\oplus\big(\bigoplus_{j\in J^{\prime}}(\bigoplus_{q_j\in Y_j^\prime}W_{j,q_j}^{-})\big)\]
when viewed as the module of the Lie algebra $\<t^{k\bm{b_{1}}},\ E(-k\bm{b_{1}}),\ h(\bm{b_{1}})\mid k\in \N\>$.
Note that both $I$ and $I^\prime$ have upper bounds, $J$ and $J^\prime$ have lower bounds and
all $X_i,Y_j,X_i^\prime,Y_j^\prime$ are finite sets as dim $V_n<\infty$ for all $n\in\Z$.
If $I=\emptyset$, similar to the proof in $(1)$, we get \[V\simeq \U(\mathcal{H}_{\bm{b_{1}}})
\otimes_{\U(\<E(k\bm{b_{1}}),\ t^{k\bm{b_{1}}},\ f(\bm{b_{i}}),\ h(\bm{b_{i}})\mid k\in \N,\ i=1,2\>)}\C1,\]
where $\<E(k\bm{b_{1}}),\ t^{k\bm{b_{1}}}\mid k\in \N\>.1=0$, $f(\bm{b_1}).1=0$, $h(\bm{b_1}).1=c_21$,
$f(\bm{b_{2}}).1=c_31$ and $h(\bm{b_{2}}).1=c_41$. Now suppose $I\neq\emptyset$,
we can choose $0\neq v_{0}\in V^{+}_{i_0,m_{i_0}}$ for some
$i_0\in I$, $m_{i_0}\in X_{i_0}$ such that $E(k\bm{b_{1}}).v_{0}=0$ for all $k\in\N$.
Then we have $v_{0}=w_{1}+w_{2}$, where $w_{1}\in\big(\bigoplus_{i\in I^{\prime}}(\bigoplus_{p_i\in X_i^\prime}W^{+}_{i,p_i})\big)\cap V_{i_{0}}$ and
$w_{2}\in\big(\bigoplus_{j\in J^{\prime}}(\bigoplus_{q_j\in Y_j^\prime}W_{j,q_j}^{-})\big)\cap V_{i_{0}}$.
If $w_{1}\neq0$, we can choose large enough $k_0\in\N$ such that $E(-k_0\bm{b_1}).w_{2}=0$ since $J^\prime$ has a lower bound.
Since $\bigoplus_{i\in I^{\prime}}(\bigoplus_{p_i\in X_i^\prime}W^{+}_{i,p_i})$ is a free $\<E(-k\bm{b_{1}})\mid k\in\N\>$-module,
then $0\neq E(-k_0\bm{b_1}).w_{1}=E(-k_0\bm{b_1}).v_{0}\in\bigoplus_{i\in I^{\prime}}(\bigoplus_{p_i\in X_i^\prime}W^{+}_{i,p_i})$
and $E(k\bm{b_{1}}).E(-k_0\bm{b_1}).v_{0}=E(-k_0\bm{b_1}).E(k\bm{b_{1}}).v_{0}=0$ for all $k\in\N$. Now we claim that there exists
$0\neq v\in V_{i}$ for some $i\in\Z$ such that $t^{k\bm{b_{1}}}.v=E(k\bm{b_{1}}).v=0$
for all $k\in\N$. In fact, if $t^{k\bm{b_1}}.(E(-k_0\bm{b_1}).v_{0})=0$ for all $k\in\N$, this is done by setting $v=E(-k_0\bm{b_1}).v_{0}$.
If there exists $k_1\in\N$ such that $t^{k_1\bm{b_1}}.(E(-k_0\bm{b_1}).v_0)\neq0$, set $v_1=t^{k_1\bm{b_1}}.(E(-k_0\bm{b_1}).v_0)$.
Repeating the process, since $I^\prime$ has an upper bound, we know that this process will terminate after finite steps.
This implies that \[V\simeq \U(\mathcal{H}_{\bm{b_{1}}})
\otimes_{\U(\<E(k\bm{b_{1}}),\ t^{k\bm{b_{1}}},\ f(\bm{b_{i}}),\ h(\bm{b_{i}})\mid k\in \N,\ i=1,2\>)}\C1,\]
where $\<E(k\bm{b_{1}}),\ t^{k\bm{b_{1}}}\mid k\in \N\>.1=0$, $f(\bm{b_1}).1=0$, $h(\bm{b_1}).1=c_21$,
$f(\bm{b_{2}}).1=c_31$ and $h(\bm{b_{2}}).1=c_41$. If $w_{1}=0$, i.e., $v_0=w_2$, we know that there exists some
$0\neq u\in V_{j_0}$ for some $j_0\in\Z$ such that
$E(k\bm{b_1}).u=0$ for $k\in\Z\setminus\{0\}$. In fact, if there exists $n_1\in\N$ such that $E(-n_1\bm{b_1})v_0\neq0$, set
$u_1=E(-n_1\bm{b_1})v_0$. We also have $E(k\bm{b_1}).u_1=0$ for all $k\in\N$ since $f(\bm{b_1}).V=0$.
Repeating the process, since $J^\prime$ has a lower bound,
we know that this process will terminate after finite steps. Then
\[V\simeq\U(\mathcal{H}_{\bm{b_{1}}})\otimes_{\U(\<E(m\bm{b_1}),\ f(\bm{b_1}),\ h(\bm{b_1})\mid m\in\Z\setminus\{0\}\>)}\C1,\]
where $E(m\bm{b_1}).1=0$ for all $m\in\Z\setminus\{0\}$, $f(\bm{b_1}).1=0$, $h(\bm{b_1}).1=c_21$, $f(\bm{b_{2}}).1=c_31$ and $h(\bm{b_{2}}).1=c_41$.
This contradicts to the condition that dim $V_{i}<\infty$ for all $i\in\Z$. Then the conclusion follows.
\end{proof}

Fix a $\Z$-basis $\{\bm{b_{1}}, \bm{b_{2}}\}$ of $\Gamma$,
$\bm{b_{1}}=b_{11}\bm{e_{1}}+b_{12}\bm{e_{2}}$,
and $\lambda_{1}, \lambda_{2}\in \C$. Any $\Z$-graded $\mathcal{H}_{\bm{b_{1}}}$-module
$V=\oplus_{i\in \Z}V_{i}$ with fixed level can be extended to a weight module of $\wtL_{0}$ by defining
\[d_{1}v_{j}=(\lambda_{1}+jb_{11})v_{j},\ d_{2}v_{j}=(\lambda_{2}+jb_{12})v_{j},\]
for $v_{j}\in V_{j}$, $j\in\Z$. One can easily see that the vector space $V$ is a $\wtL_{0}$-module
and $\mathcal{P}(V)\subseteq(\lambda_{1}, \lambda_{2})+\Z\bm{b_{1}}$.
For the $\Z$-graded irreducible $\mathcal{H}_{\bm{b_{1}}}$-modules given in the Proposition \ref{zlimoha} and \ref{zgimfh},
we let
\[V^+(\bm{c})=\U(\mathcal{H}_{\bm{b_{1}}})
\otimes_{\U(\<E(k\bm{b_{1}}),\ t^{k\bm{b_{1}}},\ K_i\mid k\in\N,\ i=1,2,3,4\>)}\C1,\]
\[V^{-}(\bm{c})=\U(\mathcal{H}_{\bm{b_{1}}})
\otimes_{\U(\<E(-k\bm{b_{1}}),\ t^{-k\bm{b_{1}}},\ K_i\mid k\in\N,\ i=1,2,3,4\>)}\C1,\]
$M_{\rho}^{\varepsilon}(\bm{c})=T_{\rho}(t_{\bm{b_1}})\otimes M^{\varepsilon}(c_{1})$ and $T_{\rho}(\mathcal{H}_{\bm{b_{1}}})(\bm{c})=T_{\rho}(\mathcal{H}_{\bm{b_{1}}})$,
we can extend those modules to weight $\wtL_{0}$-modules through the above way, then we denote
the corresponding $\wtL_{0}$-module by $V^{+}(\bm{c}, \bm{\lambda})$,
$V^{-}(\bm{c}, \bm{\lambda})$,
$M_{\rho}^{\varepsilon}(\bm{c},\bm{\lambda})$ and
$T_{\rho}(\mathcal{H}_{\bm{b_{1}}})(\bm{c},\bm{\lambda})$ respectively, where $\bm{c}=(c_{1}, c_{2},c_3,c_4)$,
$\bm{\lambda}=(\lambda_{1}, \lambda_{2})$,
$f(\bm{b_1})$, $h(\bm{b_1})$, $f(\bm{b_2})$ and $h(\bm{b_2})$ act as the scalars $c_1$, $c_2$, $c_3$, $c_4\in\C$, respectively.

With the above notations, the following results can be obtained from the Proposition \ref{zlimoha} and \ref{zgimfh}.

\begin{cor}\label{zimoh}
Let $V=\oplus_{i\in \Z}V_{i}$ be any irreducible weight module of $\wtL_{0}$ with dim $V_{i}<\infty$ for all $i\in\Z$,
and $f(\bm{b_{1}}).v=c_{1}v$, $h(\bm{b_{1}}).v=c_{2}v$, $f(\bm{b_2}).v=c_3v$ and $h(\bm{b_2}).v=c_4v$ for $v\in V$, where
$V_{i}:=V_{(\lambda_{1}, \lambda_{2})+i\bm{b_{1}}}$ for the fixed $\bm{\lambda}=(\lambda_1,\lambda_2)\in\C^2$.\\
(1). If $(c_{1}, c_{2})\neq0$, then $V\simeq V^{\varepsilon}(\bm{c}, \bm{\lambda})$ or
$V\simeq M_{\rho}^{\varepsilon}(\bm{c},\bm{\lambda})$ for some linear function
$\rho:t_{\bm{b_1}}\rightarrow\C$ with $T_{\rho}(t_{\bm{b_1}})=T_r$ for some $r\in\Z_+$.\\
(2). If $(c_{1}, c_{2})=0$, then
$V\simeq T_{\rho}(\mathcal{H}_{\bm{b_{1}}})(\bm{c},\bm{\lambda})$ for some $\rho\in\mathcal{E}_{\bm{b_1}}$.
\end{cor}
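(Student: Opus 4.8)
The plan is to reduce the statement to the two classification results already established, namely Proposition \ref{zlimoha} and Proposition \ref{zgimfh}, by identifying irreducible weight $\wtL_0$-modules with $\Z$-graded irreducible $\mathcal{H}_{\bm{b_1}}$-modules having finite-dimensional graded pieces. The first step is to make this identification precise. Since $\wtL_0=\mathcal{H}_{\bm{b_1}}\oplus\C d_1\oplus\C d_2$ and the degree derivations act on $V_i=V_{(\lambda_1,\lambda_2)+i\bm{b_1}}$ by the scalars $\lambda_1+i b_{11}$ and $\lambda_2+i b_{12}$, the assignment $i\mapsto(\lambda_1+i b_{11},\lambda_2+i b_{12})$ is injective because $\bm{b_1}\neq0$; hence the weight-space decomposition of $V$ is exactly the $\Z$-grading $V=\oplus_{i\in\Z}V_i$. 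Any graded $\mathcal{H}_{\bm{b_1}}$-submodule is then automatically stable under $d_1$ and $d_2$, so it is a $\wtL_0$-submodule, and conversely every $\wtL_0$-submodule is graded. Therefore the irreducibility of $V$ over $\wtL_0$ is equivalent to $V$ being a $\Z$-graded irreducible $\mathcal{H}_{\bm{b_1}}$-module, and the hypothesis $\dim V_i<\infty$ transfers verbatim.

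With this equivalence, part (2) is immediate: when $(c_1,c_2)=0$ the module has level zero, so Proposition \ref{zlimoha}(2) gives $V\simeq T_{\rho}(\mathcal{H}_{\bm{b_1}})$ for some $\rho\in\mathcal{E}_{\bm{b_1}}$, and re-attaching the $d_1,d_2$-action yields $V\simeq T_{\rho}(\mathcal{H}_{\bm{b_1}})(\bm{c},\bm{\lambda})$. For part (1) I would invoke Proposition \ref{zgimfh} and run through its three cases. If $c_1\neq0,c_2\neq0$, or if $c_1=0,c_2\neq0$, the conclusion is one of the two induced modules built from $\langle E(\pm k\bm{b_1}),t^{\pm k\bm{b_1}}\rangle$, which after the $\bm{\lambda}$-extension are precisely $V^{+}(\bm{c},\bm{\lambda})$ and $V^{-}(\bm{c},\bm{\lambda})$, i.e. $V^{\varepsilon}(\bm{c},\bm{\lambda})$; here one uses that, since $\{\bm{b_1},\bm{b_2}\}$ is a $\Z$-basis, the matrix of coordinates has determinant $\pm1$, so the scalars $c_1,c_2,c_3,c_4$ attached to $f(\bm{b_1}),h(\bm{b_1}),f(\bm{b_2}),h(\bm{b_2})$ determine and are determined by the central character on $K_1,\dots,K_4$ appearing in the definition of $V^{\pm}(\bm{c})$. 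If $c_1\neq0,c_2=0$, Proposition \ref{zgimfh}(2) gives $V\simeq T_{\rho}(t_{\bm{b_1}})\otimes M^{\varepsilon}(c_1)=M_{\rho}^{\varepsilon}(\bm{c},\bm{\lambda})$ with $T_{\rho}(t_{\bm{b_1}})=T_r$ for some $r\in\Z_+$.

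The only point that needs genuine verification, and the closest thing to an obstacle, is that the isomorphisms furnished by the two propositions are compatible with the passage to $\wtL_0$. This is mild: every isomorphism produced in Propositions \ref{zlimoha} and \ref{zgimfh} is a $\Z$-graded map, so it automatically intertwines the scalar actions of $d_1$ and $d_2$ dictated by $\bm{\lambda}$ and hence upgrades to an isomorphism of $\wtL_0$-modules. What remains is the routine bookkeeping of matching the sign $\varepsilon$ and the linear functional $\rho$ to the notation $V^{\varepsilon}$, $M_{\rho}^{\varepsilon}$, and $T_{\rho}(\mathcal{H}_{\bm{b_1}})$, after which both parts of the corollary follow.
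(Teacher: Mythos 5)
Your proposal is correct and follows exactly the route the paper intends: the paper offers no written proof beyond the remark that the corollary ``can be obtained from'' Propositions \ref{zlimoha} and \ref{zgimfh}, and your argument simply fills in the implicit details --- the identification of irreducible weight $\wtL_{0}$-modules with $\Z$-graded irreducible $\mathcal{H}_{\bm{b_{1}}}$-modules (via the injectivity of $i\mapsto(\lambda_{1}+ib_{11},\lambda_{2}+ib_{12})$ and the fact that submodules are graded), the case split on $(c_{1},c_{2})$, and the unimodular change of basis relating $f(\bm{b_{i}}),h(\bm{b_{i}})$ to $K_{1},\dots,K_{4}$. No gaps; the grading-shift/bookkeeping issue you flag is genuinely the only loose end, and it is absorbed by the isomorphisms $T_{\rho,i}(\mathcal{H}_{\bm{b_{1}}})\simeq T_{\rho,j}(\mathcal{H}_{\bm{b_{1}}})$ of $(2.4)$, just as you indicate.
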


The following lemma give the characterization of the irreducible weight modules of $\wtL$ with finite dimensional weight spaces.
\begin{lem}\label{izmoL}
Let $\{\bm{b_{1}}, \bm{b_{2}}\}$ be a $\Z$-basis of $\Gamma$. $V$ is an irreducible
weight module of $\wtL$ with finite dimensional weight spaces and $f(\bm{b_1}),h(\bm{b_1}), f({\bm{b_2}}),h(\bm{b_2})$
act on $V$ as scalars $c_1,c_2,c_3,c_4$ respectively. If there exist $\lambda_{1}, \lambda_{2}\in \C$ such that
$V_{(\lambda_{1}, \lambda_{2})}\neq0$ and $\mathcal{P}(V)\cap \big((\lambda_{1}, \lambda_{2})+\Z\bm{b_{1}}+
\N\bm{b_{2}}\big)=\emptyset$, we have\\
(1). if $c_1=c_2=0$, $V\simeq M\big(\bm{b_{1}}, \bm{b_{2}}, T_{\rho}(\mathcal{H}_{\bm{b_{1}}})(\bm{c},\bm{\lambda})\big)$ for some $\rho\in\mathcal{E}_{\bm{b_1}}$,\\
(2). if $c_1\neq0$, $c_2=0$, $V\simeq M\big(\bm{b_{1}}, \bm{b_{2}},M_{\rho}^{\varepsilon}(\bm{c},\bm{\lambda})\big)$ for some linear function
$\rho:t_{\bm{b_1}}\rightarrow\C$ satisfying $T_{\rho}(t_{\bm{b_1}})=T_r$ for some $r\in\Z_+$,\\
(3). if $c_2\neq0$, $V\simeq M\big(\bm{b_{1}}, \bm{b_{2}}, V^{\varepsilon}(\bm{c}, \bm{\lambda})\big)$,\\
where $\varepsilon\in \{+, -\}$, $\bm{\lambda}=(\lambda_{1}, \lambda_{2})$, $\bm{c}=(c_1,c_2,c_3,c_4)$.
\end{lem}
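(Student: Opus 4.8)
The plan is to realize $V$ as the irreducible induced module $M(\bm{b_{1}},\bm{b_{2}},V^{0})$ for a suitable $\wtL_{0}$-module $V^{0}$, and then identify $V^{0}$ through Corollary \ref{zimoh}. First I would set $V^{0}=\bigoplus_{j\in\Z}V_{(\lambda_{1},\lambda_{2})+j\bm{b_{1}}}$, the ``$\bm{b_{2}}$-level zero slice'' of $V$. Every weight of $V$ lies in $(\lambda_{1},\lambda_{2})+\Gamma$, and by hypothesis none lies in $(\lambda_{1},\lambda_{2})+\Z\bm{b_{1}}+\N\bm{b_{2}}$, so all weights sit at nonpositive $\bm{b_{2}}$-level. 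Since each generator of $\wtL_{i}$ raises the $\bm{b_{2}}$-level by $i$, this forces $\wtL_{+}.V^{0}=0$, while $\wtL_{0}$ preserves each weight space and hence preserves $V^{0}$. Thus $V^{0}$ is a $\Z$-graded $\wtL_{0}$-module whose graded piece $V^{0}_{j}=V_{(\lambda_{1},\lambda_{2})+j\bm{b_{1}}}$ is a single weight space of $V$, so $\dim V^{0}_{j}<\infty$ for all $j$; moreover the $d_{1},d_{2}$ action reads off $(\lambda_{1}+jb_{11},\lambda_{2}+jb_{12})$, so $V^{0}$ is precisely a weight $\wtL_{0}$-module of the type appearing before Corollary \ref{zimoh}.

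The core step is to show that $V^{0}$ is an irreducible $\wtL_{0}$-module and that $V\cong M(\bm{b_{1}},\bm{b_{2}},V^{0})$. Since $V_{(\lambda_{1},\lambda_{2})}\neq0$ and $V$ is irreducible, $V=\U(\wtL)V^{0}$; using $\wtL_{+}.V^{0}=0$, $\wtL_{0}V^{0}\subseteq V^{0}$ and the decomposition $\U(\wtL)=\U(\wtL_{-})\U(\wtL_{0})\U(\wtL_{+})$, this collapses to $V=\U(\wtL_{-})V^{0}$. For irreducibility, given a nonzero $\wtL_{0}$-submodule $W\subseteq V^{0}$, I would check by the standard commutation argument (using the grading $[\wtL_{i},\wtL_{j}]\subseteq\wtL_{i+j}$ together with $\wtL_{+}.W=0$) that $\U(\wtL_{-})W$ is an $\wtL$-submodule; irreducibility of $V$ then gives $\U(\wtL_{-})W=V$, and intersecting with the $\bm{b_{2}}$-level zero component, which $\wtL_{-}$ strictly lowers, yields $W=V^{0}$. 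Hence $V^{0}$ is irreducible. The universal property of induction then produces an epimorphism $\wtM(\bm{b_{1}},\bm{b_{2}},V^{0})\twoheadrightarrow V$ that is injective on $V^{0}$; its kernel meets $V^{0}$ trivially, so it is contained in the unique maximal such submodule $J(\bm{b_{1}},\bm{b_{2}},V^{0})$, and the induced map $M(\bm{b_{1}},\bm{b_{2}},V^{0})\to V$ is an isomorphism of irreducible modules.

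Finally I would feed the irreducible weight $\wtL_{0}$-module $V^{0}$ into Corollary \ref{zimoh}, refined by the case division of Proposition \ref{zgimfh} and Proposition \ref{zlimoha}. If $c_{1}=c_{2}=0$, then $V^{0}\cong T_{\rho}(\mathcal{H}_{\bm{b_{1}}})(\bm{c},\bm{\lambda})$ for some $\rho\in\mathcal{E}_{\bm{b_{1}}}$, giving case (1). If $c_{1}\neq0$ and $c_{2}=0$, then $V^{0}\cong M_{\rho}^{\varepsilon}(\bm{c},\bm{\lambda})$ with $T_{\rho}(t_{\bm{b_{1}}})=T_{r}$ for some $r\in\Z_{+}$, giving case (2). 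If $c_{2}\neq0$ (whether or not $c_{1}=0$), then $V^{0}\cong V^{\varepsilon}(\bm{c},\bm{\lambda})$, giving case (3). Substituting each of these identifications of $V^{0}$ into $V\cong M(\bm{b_{1}},\bm{b_{2}},V^{0})$ completes the proof.

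I expect the main obstacle to be the irreducibility of the slice $V^{0}$, and specifically the verification that $\U(\wtL_{-})W$ is closed under the $\wtL_{+}$-action. This is where the interplay between $\wtL_{+}.V^{0}=0$ and the commutators $[\wtL_{+},\wtL_{-}]$, which by the grading may land in $\wtL_{-}$, $\wtL_{0}$, or $\wtL_{+}$, must be organized by an induction on the length of $\wtL_{-}$-words; everything else reduces to the already established classification results.
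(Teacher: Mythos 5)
Your proposal is correct and follows essentially the same route as the paper: the paper's proof likewise sets $W=\oplus_{i\in\Z}V_{(\lambda_{1},\lambda_{2})+i\bm{b_{1}}}$, observes that $W$ is an irreducible $\wtL_{0}$-module killed by $\wtL_{+}$, obtains an epimorphism $\wtM(\bm{b_{1}},\bm{b_{2}},W)\twoheadrightarrow V$ via PBW, and concludes from Corollary \ref{zimoh} and the irreducibility of $V$. The only difference is that you spell out the details (irreducibility of the slice via $\U(\wtL)W'=\U(\wtL_{-})W'$ and comparison of $\bm{b_{2}}$-levels, and the identification of the kernel with the maximal submodule $J$) which the paper asserts without elaboration; these verifications are correct.
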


\begin{proof}
Let $W=\oplus_{i\in \Z}V_{(\lambda_{1}, \lambda_{2})+i\bm{b_{1}}}$. Since
$\mathcal{P}(V)\cap \big((\lambda_{1}, \lambda_{2})+\Z\bm{b_{1}}+
\N\bm{b_{2}}\big)=\emptyset$, we see that $W$ is an irreducible $\wtL_{0}$ weight module and $\wtL_{+}W=0$.
Thus by the construction of $\wtM(\bm{b_{1}}, \bm{b_{2}}, W)$ and the PBW theorem, there exists an epimorphism $\varphi$ from
$\wtM(\bm{b_{1}}, \bm{b_{2}}, W)$ to $V$ such that $\varphi\mid_{W}=\text{id}_{W}$.
Therefore, the lemma follows from the Corollary \ref{zimoh} and the irreducibility of $V$.
\end{proof}

With the notations in the Lemma \ref{izmoL}, the following lemma shows that the cases $(2)$ and $(3)$ in the Lemma \ref{izmoL} don't occur.

\begin{lem}\label{nzHCmoL}
For any $\Z$-basis $\{\bm{b_{1}}, \bm{b_{2}}\}$ of $\Gamma$,
$M\big(\bm{b_{1}}, \bm{b_{2}},M_{\rho}^{\varepsilon}(\bm{c},\bm{\lambda})\big)$
or $M\big(\bm{b_{1}}, \bm{b_{2}}, V^{\varepsilon}(\bm{c}, \bm{\lambda})\big)$ is not a Harish-Chandra module.
\end{lem}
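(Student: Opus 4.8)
The plan is to prove the \emph{negation of quasi-finiteness}: for a suitable weight $\mu$ the weight space of $M(\bm{b_1},\bm{b_2},V)$ at $\mu$ is infinite dimensional. Since $M(\bm{b_1},\bm{b_2},V)$ is, by construction, an irreducible weight module, failing quasi-finiteness is exactly failing to be a Harish-Chandra module. Write $\wtM(V)$ for the induced module, $J$ for its maximal submodule, and fix $\mu=(\lambda_1,\lambda_2)+p\bm{b_1}-\bm{b_2}$ for a free parameter $p\in\Z$.

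First I would isolate a \emph{pairing principle} in $\bm{b_2}$-degree $-1$. Because $\wtM(V)=\U(\wtL_-)\otimes V$ is concentrated in non-positive $\bm{b_2}$-degrees and $\wtL_+V=0$, the $\mu$-weight space of $\wtM(V)$ equals $\wtL_{-1}\cdot V$, spanned by $E((p-j)\bm{b_1}-\bm{b_2})w$ and $t^{(p-j)\bm{b_1}-\bm{b_2}}w$ with $w\in V_j$, $j\in\Z$. Moreover, for $u$ of $\bm{b_2}$-degree $-1$ and $y\in\wtL_{+1}$ one has $yu\in\wtL_0\cdot V\subseteq V$, the only surviving term being the bracket since $\wtL_{+1}V=0$. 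As $J$ is a submodule with $J\cap V=0$, any $u\in J$ satisfies $yu=0$ in $\wtM(V)$ for all $y\in\wtL_{+1}$. Hence, to show that an infinite family $\{x_j\}\subseteq\wtM(V)_\mu$ remains linearly independent in $M(V)$, it suffices to choose operators $y_s=E(s\bm{b_1}+\bm{b_2})$ (or $t^{s\bm{b_1}+\bm{b_2}}$) so that $\sum_j a_j\,y_s x_j=0$ for all $s$ forces every $a_j=0$.

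Next I would construct the family so that this pairing is \emph{triangular with nonzero diagonal}, the diagonal being nonzero precisely because the level is nonzero. The only nonzero central contribution in $[y_s,\cdot]$ is $f(s\bm{b_1}+\bm{b_2})=sc_1+c_3$ (from $[E,E]$) or $h(s\bm{b_1}+\bm{b_2})=sc_2+c_4$ (from $[t,E]$). For $V=V^\varepsilon(\bm c,\bm\lambda)$ with $c_2\neq0$ I use the abelian direction: the vectors $w_j:=(t^{-\bm{b_1}})^{-j}1$ (resp. $(t^{\bm{b_1}})^{j}1$ for $\varepsilon=-$) are nonzero, lie in infinitely many $V_j$, and are killed by every $t^{k\bm{b_1}}$ with $k\in\N$. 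Setting $x_j=t^{(p-j)\bm{b_1}-\bm{b_2}}w_j$ and pairing with $y_s=E(s\bm{b_1}+\bm{b_2})$, the off-diagonal coefficient $t^{(s+p-j)\bm{b_1}}w_j$ vanishes whenever $s+p-j>0$, while the diagonal entry $s=j-p$ equals a scalar multiple of $(sc_2+c_4)w_j$, nonzero for all but at most one $j$ since $c_2\neq0$. Peeling from the largest index then shows that $\{x_j\}$, minus at most one term, is independent in $M(V)$, so $\dim M(V)_\mu=\infty$. For $V=M_\rho^\varepsilon(\bm c,\bm\lambda)=T_r\otimes M^\varepsilon(c_1)$ with $c_1\neq0$ and $r\geq1$ the same scheme works with $E$ and $t$ interchanged: since $c_2=0$ the modes $t^{k\bm{b_1}}$ and $E(k\bm{b_1})$ commute, the vectors $w_j=t^{(j-j_0)\bm{b_1}}\otimes v_a$ (for $j\in j_0+r\Z$, where $v_a\in V_{j_0}$) are annihilated by all $E(k\bm{b_1})$, $k\in\N$, and the pairing of $x_j=E((p-j)\bm{b_1}-\bm{b_2})w_j$ with $y_s=E(s\bm{b_1}+\bm{b_2})$ vanishes above the diagonal and equals $(sc_1+c_3)w_j$ on it.

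The main obstacle is the degenerate sub-case of (2) with $r=0$: then $T_0=\C1$ and every $t^{k\bm{b_1}}$ acts as $0$ on $V=M^\varepsilon(c_1)$. Here the $t$-lowering vectors $x_j=t^{(p-j)\bm{b_1}-\bm{b_2}}w_j$ pair as $E(s\bm{b_1}+\bm{b_2})x_j=\delta_{s,\,j-p}\,c_4\,w_j$, so if $c_4\neq0$ the pairing is already diagonal and the infinitely many $x_j$ (as $w_j$ runs over bases of the $V_j$) are independent in $M(V)$. The genuine difficulty is $r=0$ together with $c_2=c_4=0$, where the $t$-lowering vectors lie in $J$ and only the $E$-pairing survives; its matrix is then a Shapovalov form of the Heisenberg Fock module $M^\varepsilon(c_1)$ (the $E(k\bm{b_1})$ form a Heisenberg algebra with central element $f(\bm{b_1})=c_1$) rather than a triangular one. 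I expect to resolve this either by invoking the non-degeneracy of that form, which holds precisely because $c_1\neq0$ and forces the pairing map to have infinite rank, or by restricting $M(V)$ to the Virasoro-like subalgebra $\langle E(\bm m),K_3,K_4\rangle$, for which $v_a$ is a generalized highest weight vector of nonzero level, and quoting the corresponding non-Harish-Chandra result of \cite{LT1}. Carrying out one of these two routes, and checking the precise cone on which $v_a$ is annihilated, is the delicate point of the proof.
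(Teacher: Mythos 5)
Your completed cases are correct, and for $V^{\varepsilon}(\bm{c},\bm{\lambda})$ your argument is the mirror image of the paper's own computation: the paper fixes the vector $v_0$ with $E(k\bm{b_{1}})v_0=t^{k\bm{b_{1}}}v_0=0$ for $k\in\N$, places the vectors $E(k_j\bm{b_{1}}-\bm{b_{2}})t^{-k_j\bm{b_{1}}}v_0$ in the single weight space of weight $(\lambda_1,\lambda_2)-\bm{b_{2}}$, and pairs them against the raising operators $t^{-k_j\bm{b_{1}}+\bm{b_{2}}}$; the diagonal entries are the central scalars $h(-k_j\bm{b_{1}}+\bm{b_{2}})=-k_jc_2+c_4$, nonzero after discarding at most one $k_j$ since $c_2\neq0$, while the off-diagonal terms die because the $t$-modes commute and the positive ones kill $v_0$. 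You interchange the roles of $E$ and $t$ (descend by $t^{(p-j)\bm{b_{1}}-\bm{b_{2}}}$, raise by $E(s\bm{b_{1}}+\bm{b_{2}})$), which is the same triangular mechanism, and your ``pairing principle'' ($u\in J$ of $\bm{b_{2}}$-degree $-1$ forces $\wtL_{1}u\subseteq J\cap V=0$) is exactly the step the paper uses implicitly to transfer linear independence to the quotient $M(V)$. Your direct treatments of $M_{\rho}^{\varepsilon}(\bm{c},\bm{\lambda})$ with $r\geq1$, and of $r=0$ with $c_4\neq0$, are also sound and are in fact more self-contained than the paper, which disposes of the \emph{entire} case $c_1\neq0$ (hence of all $M_{\rho}^{\varepsilon}$, and of $V^{\varepsilon}$ with $c_1\neq0$) in one sentence by citing Lemma 2.6 of [LT1].

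The genuine gap is the sub-case you flag yourself: $r=0$ and $c_2=c_4=0$, where every $t^{k\bm{b_{1}}}$ acts as zero on $V\simeq M^{\varepsilon}(c_1)$ and you only name two candidate routes without executing either. Be aware that your second route is, in substance, the paper's actual proof -- the deferred ``annihilation-cone check'' is the whole content of the paper's appeal to [LT1, Lemma 2.6] -- so deferring it leaves the proposal incomplete exactly where the paper's citation does the work. Your first route does close, and more cheaply than ``nondegeneracy of the Shapovalov form'' suggests: given a finite relation $\sum_j a_j\,E((p-j)\bm{b_{1}}-\bm{b_{2}})w_j$ with $0\neq w_j\in V_j$ holding in $M(V)$, apply $y_s=E(s\bm{b_{1}}+\bm{b_{2}})$ with $|s|$ large and of the sign for which the shifted modes land in the half over which $M^{\varepsilon}(c_1)$ is free. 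Then no central term occurs, the $t$-part of the mixed bracket acts as zero, and the output is $\sum_j \pm(s+p-j)\,E((s+p-j)\bm{b_{1}})w_j\in V$ with numerical factors $\pm(s+p-j)\neq0$; since $M^{\varepsilon}(c_1)$ is, by PBW, a free module over the polynomial algebra on its lowering modes, for $|s|$ large the generators $E((s+p-j)\bm{b_{1}})$ are pairwise distinct and absent from the finitely many $w_j$, so the outputs are linearly independent and all $a_j=0$. Combined with $J\cap V=0$ this gives infinite rank of the pairing and $\dim M(V)_{\mu}=\infty$ directly, with no form-theoretic input. As submitted, however, the proof has a real hole at precisely this sub-case, even though both of your proposed repairs (one of them being the paper's) are viable.
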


\begin{proof}
With the notations in the Lemma \ref{izmoL}, for the case that $f(\bm{b_1})$ acts as the scalar $c_1\neq0$,
the lemma follows from the Lemma 2.6 in \cite{LT1}. So we only need to consider the case $c_1=0$, $c_2\neq0$. Without loss
of generality, we may assume that there exists a weight vector $0\neq v_0\in V^{\varepsilon}(\bm{c}, \bm{\lambda})$
such that $E(k\bm{b_1})v_0=t^{k\bm{b_1}}v_0=0$ and
$E(-k\bm{b_1})v_0\neq0$ and $t^{-k\bm{b_1}}v_0\neq0$ for all $k\in\N$ (see the Proposition \ref{zgimfh}$(3)$).
For any $n\in\N$, we can choose $k_j\in\Z$, $1\leq j\leq n$ with $0<k_1<k_2<\cdots<k_n$ such that
$h(-k_j\bm{b_1}+\bm{b_2})v_0\neq0$ for $1\leq j\leq n$. We claim that
\[\{E(k_j\bm{b_1}-\bm{b_2})t^{-k_j\bm{b_1}}v_0\mid1\leq j\leq n\}
\subseteq M\big(\bm{b_{1}}, \bm{b_{2}}, V^{\varepsilon}(\bm{c}, \bm{\lambda})\big)_{(\lambda_1,\lambda_2)-\bm{b_2}}\]
is a set of linear independent vectors,
thus the conclusion follows. In fact, if $\sum_{j=1}^na_jE(k_j\bm{b_1}-\bm{b_2})t^{-k_j\bm{b_1}}v_0=0$, then
\begin{eqnarray*}0&=&t^{-k_1\bm{b_1}+\bm{b_2}}\sum_{j=1}^na_jE(k_j\bm{b_1}-\bm{b_2})t^{-k_j\bm{b_1}}v_0\\
&=&a_1h(-k_1\bm{b_1}+\bm{b_2})t^{-k_1\bm{b_1}}v_0+\sum_{j=2}^na_j
\text{det}{k_j\bm{b_1}-\bm{b_2} \choose -k_1\bm{b_1}+\bm{b_2}}t^{(k_j-k_1)\bm{b_1}}t^{-k_j\bm{b_1}}v_0.\end{eqnarray*}
Since $h(-k_1\bm{b_1}+\bm{b_2})\neq0$, this implies $a_1=0$. Similarly,
we can prove $a_2=a_3=\cdots=a_n=0$. Therefore the conclusion follows.
\end{proof}

From the Lemmas \ref{izmoL} and \ref{nzHCmoL}, we have:

\begin{prop}\label{211}
For any $\Z$-basis $\{\bm{b_{1}}, \bm{b_{2}}\}$ of $\Gamma$.
Let $V$ be the Harish-Chandra module of $\wtL$. If there exist $\lambda_{1}, \lambda_{2}\in \C$ such that
$V_{(\lambda_{1}, \lambda_{2})}\neq0$ and $\mathcal{P}(V)\cap \big((\lambda_{1}, \lambda_{2})+\Z\bm{b_{1}}+
\N\bm{b_{2}}\big)=\emptyset$, then
$V\simeq M\big(\bm{b_{1}}, \bm{b_{2}}, T_{\rho}(\mathcal{H}_{\bm{b_{1}}})(\bm{c},\bm{\lambda})\big)$ for some $\rho\in\mathcal{E}_{\bm{b_1}}$.
\end{prop}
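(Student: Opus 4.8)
The plan is to obtain Proposition \ref{211} as an immediate consequence of Lemmas \ref{izmoL} and \ref{nzHCmoL}, since the substantive work is already contained in those two statements.

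First I would verify that $V$ meets all the hypotheses of Lemma \ref{izmoL}. By assumption $V$ is a Harish-Chandra module, hence in particular an irreducible weight module of $\wtL$ with finite dimensional weight spaces, so the central elements $K_1, K_2, K_3, K_4$ act on $V$ as scalars. Writing $\bm{b_1} = b_{11}\bm{e_1} + b_{12}\bm{e_2}$ and $\bm{b_2} = b_{21}\bm{e_1} + b_{22}\bm{e_2}$, the elements $f(\bm{b_1}) = b_{11}K_3 + b_{12}K_4$, $h(\bm{b_1}) = b_{11}K_1 + b_{12}K_2$, $f(\bm{b_2}) = b_{21}K_3 + b_{22}K_4$ and $h(\bm{b_2}) = b_{21}K_1 + b_{22}K_2$ are $\C$-linear combinations of the $K_i$, so they too act as scalars, say $c_1, c_2, c_3, c_4$. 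The remaining hypotheses of Lemma \ref{izmoL}, namely $V_{(\lambda_1, \lambda_2)} \neq 0$ and $\mathcal{P}(V) \cap \big((\lambda_1, \lambda_2) + \Z\bm{b_1} + \N\bm{b_2}\big) = \emptyset$, are exactly the assumptions of the present proposition. Thus Lemma \ref{izmoL} applies, and $V$ is isomorphic to one of the three modules listed there according to the vanishing of $c_1$ and $c_2$.

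Next I would eliminate two of the three cases. If $(c_1, c_2)$ fell into case $(2)$ of Lemma \ref{izmoL} then $V \simeq M\big(\bm{b_1}, \bm{b_2}, M_\rho^\varepsilon(\bm{c}, \bm{\lambda})\big)$, and if it fell into case $(3)$ then $V \simeq M\big(\bm{b_1}, \bm{b_2}, V^\varepsilon(\bm{c}, \bm{\lambda})\big)$. But Lemma \ref{nzHCmoL} asserts that neither of these modules is a Harish-Chandra module, contradicting the hypothesis on $V$. Hence both cases are excluded and we must be in case $(1)$, i.e.\ $c_1 = c_2 = 0$, which yields $V \simeq M\big(\bm{b_1}, \bm{b_2}, T_\rho(\mathcal{H}_{\bm{b_1}})(\bm{c}, \bm{\lambda})\big)$ for some $\rho \in \mathcal{E}_{\bm{b_1}}$, as required.

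Since the proposition is a direct corollary, there is no genuine obstacle at this stage; the real difficulty has already been surmounted inside the two lemmas. In particular, the crux is the explicit construction in Lemma \ref{nzHCmoL} of arbitrarily many linearly independent vectors $E(k_j\bm{b_1} - \bm{b_2})t^{-k_j\bm{b_1}}v_0$ all lying in the single weight space of weight $(\lambda_1, \lambda_2) - \bm{b_2}$, which forces that space to be infinite dimensional and thereby rules out the nonzero-level cases. The only point requiring attention here is the entirely routine observation that $f(\bm{b_1}), h(\bm{b_1}), f(\bm{b_2}), h(\bm{b_2})$ act by scalars, so that Lemma \ref{izmoL} is indeed applicable.
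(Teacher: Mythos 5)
Your proposal is correct and matches the paper's own argument exactly: the paper also derives Proposition \ref{211} directly from Lemmas \ref{izmoL} and \ref{nzHCmoL}, with Lemma \ref{nzHCmoL} excluding the cases $c_1\neq0$ and $c_2\neq0$ so that only case $(1)$ of Lemma \ref{izmoL} survives. Your preliminary check that $f(\bm{b_1})$, $h(\bm{b_1})$, $f(\bm{b_2})$, $h(\bm{b_2})$ act as scalars (being linear combinations of the central elements $K_i$) is the same routine observation the paper leaves implicit.
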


\begin{rmk}
If $V$ is a Harish-Chandra module $V$ of $\wtL$ satisfying the conditions in the Proposition $2.11$,
then $\bm{c}=(0,0,c_3,c_4)$, i.e., $f(\bm{b_1}),h(\bm{b_1})$ acting trivially.
\end{rmk}

As one of the main results in this paper, we prove that a Harish-Chandra module of $\wtL$ is
a generalized highest weight module or a uniformly bounded module. First, we need the following lemma.
\begin{lem}\label{213}
An irreducible weight $\wtL$-module $V$ is a generalized highest weight module if
there is a $\Z$-basis $\{\bm{b_{1}},\ \bm{b_{2}}\}$ of $\Gamma$ and a weight vector $v\neq0$
such that $E(\bm{b_{1}})v=E(\bm{b_{2}})v=t^{\bm{b_{1}}}v=0$.
\end{lem}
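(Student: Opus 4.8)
The plan is to reduce everything to the two ``spreading'' identities that the brackets in Definition~\ref{defHVart} provide. Since $V$ is irreducible and $v\neq0$, one has $V=\U(\wtL)v$ automatically, so it suffices to exhibit a $\Z$-basis $\{\bm{c_1},\bm{c_2}\}$ of $\Gamma$ with $E(\bm{m})v=t^{\bm{m}}v=0$ for all $\bm{m}\in\Z_+\bm{c_1}+\Z_+\bm{c_2}$, and then $v$ is a GHW vector for that basis. The two identities are: whenever $\bm{p},\bm{q}\in\Gamma$ are linearly independent we have $\bm{p}+\bm{q}\neq\bm0$ and $\det{\bm{q}\choose\bm{p}}\neq0$, so the central terms drop out and $E(\bm{p}+\bm{q})=\det{\bm{q}\choose\bm{p}}^{-1}[E(\bm{p}),E(\bm{q})]$ while $t^{\bm{p}+\bm{q}}=\det{\bm{q}\choose\bm{p}}^{-1}[t^{\bm{p}},E(\bm{q})]$. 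Hence $E(\bm{p})v=E(\bm{q})v=0$ forces $E(\bm{p}+\bm{q})v=0$, and $t^{\bm{p}}v=E(\bm{q})v=0$ forces $t^{\bm{p}+\bm{q}}v=0$.

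First I would spread the $E$-annihilation. Starting from $E(\bm{b_1})v=E(\bm{b_2})v=0$ and inducting on $a+b$, I apply the first identity with $\bm{p}=(a-1)\bm{b_1}+b\bm{b_2}$, $\bm{q}=\bm{b_1}$ when $a\ge2$, and symmetrically with $\bm{q}=\bm{b_2}$ when $a=1,b\ge2$. At each step the two summands are linearly independent because the remaining coordinate stays nonzero, so the determinant is nonzero; this yields
\[E(a\bm{b_1}+b\bm{b_2})v=0\qquad\text{for all }a,b\ge1.\]

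Next I would spread the $t$-annihilation, feeding in the $E$-vanishing just obtained. From $t^{\bm{b_1}}v=0$ and $E(\bm{b_2})v=0$, repeatedly adding $\bm{b_2}$ (each step keeps independence since the $\bm{b_1}$-coordinate stays equal to $1$) gives $t^{\bm{b_1}+k\bm{b_2}}v=0$ for all $k\ge0$. Then for $k\ge1$, repeatedly adding $\bm{b_1}$ is legitimate because $\det{\bm{b_1}\choose a\bm{b_1}+k\bm{b_2}}=k\neq0$, which gives
\[t^{a\bm{b_1}+b\bm{b_2}}v=0\qquad\text{for all }a,b\ge1.\]

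The main obstacle, as these two displays already show, is that neither the axis $\Z\bm{b_1}$ nor the axis $\Z\bm{b_2}$ is reachable by the spreading (the relevant determinant vanishes there), so I can only conclude vanishing on the \emph{open} first-quadrant cone. The fix is a change of basis that avoids the axes: put $\bm{c_1}=\bm{b_1}+\bm{b_2}$ and $\bm{c_2}=\bm{b_1}+2\bm{b_2}$, which is a $\Z$-basis since $\det{\bm{c_1}\choose\bm{c_2}}=1$. For integers $s,r\ge0$ not both zero, $s\bm{c_1}+r\bm{c_2}=(s+r)\bm{b_1}+(s+2r)\bm{b_2}$ has both coordinates $\ge1$, so $\Z_+\bm{c_1}+\Z_+\bm{c_2}\setminus\{\bm0\}$ lies strictly inside $\{a\bm{b_1}+b\bm{b_2}\mid a,b\ge1\}$. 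Both displays then apply and give $E(\bm{m})v=t^{\bm{m}}v=0$ for every $\bm{m}\in\Z_+\bm{c_1}+\Z_+\bm{c_2}$, so $v$ is a GHW vector for $\{\bm{c_1},\bm{c_2}\}$ and $V$ is a GHW module (this is consistent with the Remark after the construction of $M(\bm{b_1},\bm{b_2},V)$). The only thing needing care is the per-step bookkeeping of the independence/nonvanishing-determinant conditions and checking that the chosen basis lands the closed cone inside the open one.
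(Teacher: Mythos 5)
Your proof is correct and takes essentially the same route as the paper's: an induction using the brackets $[E(\bm{p}),E(\bm{q})]$ and $[t^{\bm{p}},E(\bm{q})]$ to kill everything in the open cone $\N\bm{b_1}+\N\bm{b_2}$, followed by passing to a new $\Z$-basis whose closed positive cone lies inside that open cone. The only cosmetic difference is your choice $\{\bm{b_1}+\bm{b_2},\ \bm{b_1}+2\bm{b_2}\}$ in place of the paper's $\{2\bm{b_1}+\bm{b_2},\ 3\bm{b_1}+\bm{b_2}\}$, and you spell out the determinant bookkeeping that the paper leaves implicit.
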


\begin{proof} Since there is a weight vector $v\neq0$, s.t., $E(\bm{b_{1}})v=E(\bm{b_{2}})v=t^{\bm{b_{1}}}v=0$, by induction, we have
\[E(\bm{m})v=t^{\bm{m}}v=0,\]for $\bm{m}\in\N\bm{b_{1}}+\N\bm{b_{2}}$ by induction.
Therefore, for $\bm{b_{1}}^{\prime}=2\bm{b_{1}}+\bm{b_{2}}$, $\bm{b_{2}}^{\prime}=3\bm{b_{1}}+\bm{b_{2}}\in\Gamma$,
we have \[E(\bm{m})v=t^{\bm{m}}v=0,\]for $\bm{m}\in\Z_{+}\bm{b_{1}}^{\prime}+\Z_{+}\bm{b_{2}}^{\prime}$.
It is obvious that $\{\bm{b_{1}}^{\prime},\ \bm{b_{2}}^{\prime}\}$ is a $\Z$-basis of $\Gamma$,
together with $V$ being irreducible, we get that $V$ is a generalized highest weight module.
\end{proof}

\begin{prop}
A Harish-Chandra module $V$ of $\wtL$ is a generalized highest weight module or a uniformly bounded module.
\end{prop}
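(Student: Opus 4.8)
The plan is to show that if $V$ is \emph{not} uniformly bounded, then it must be a GHW module, and the key tool for this is Lemma \ref{213}: it suffices to produce a $\Z$-basis $\{\bm{b_1},\bm{b_2}\}$ of $\Gamma$ and a nonzero weight vector $v$ with $E(\bm{b_1})v=E(\bm{b_2})v=t^{\bm{b_1}}v=0$. So the whole argument reduces to manufacturing such a vector. First I would invoke the observation (already in the excerpt) that $\mathcal{P}(V)\subseteq(\lambda_1,\lambda_2)+\Gamma$ for some fixed $(\lambda_1,\lambda_2)$, so that $V$ is graded by $\Gamma$ up to a shift. The dichotomy I want is: either some $\Z$-line direction $\bm{b_2}$ has the property that $\mathcal{P}(V)$ misses a half-strip $(\lambda_1,\lambda_2)+\Z\bm{b_1}+\N\bm{b_2}$ for a suitable $\bm{b_1}$, or no such direction exists.

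In the first case I would apply Proposition \ref{211}: the hypothesis of that proposition is exactly that such a half-strip is missed, and its conclusion is that $V\simeq M\big(\bm{b_1},\bm{b_2},T_\rho(\mathcal{H}_{\bm{b_1}})(\bm{c},\bm{\lambda})\big)$, which (by the Remark following Definition of $M(V)$) is a GHW module corresponding to the $\Z$-basis $\{\bm{b_1}+\bm{b_2},\bm{b_1}+2\bm{b_2}\}$. Alternatively, one can directly produce the vector needed for Lemma \ref{213} from the highest-weight-type vector of the $\mathcal{H}_{\bm{b_1}}$-module $T_\rho(\mathcal{H}_{\bm{b_1}})$, which is annihilated by $E(k\bm{b_1}),t^{k\bm{b_1}}$ for $k\in\N$ and (since $\wtL_+V=0$) also by $E(\bm{b_2})$; this is precisely an $E(\bm{b_1})v=E(\bm{b_2})v=t^{\bm{b_1}}v=0$ configuration after relabeling the basis. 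Either way, $V$ is GHW.

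The substance of the proof, and what I expect to be the main obstacle, is the contrapositive direction: to show that if $V$ is \emph{not} GHW, then $\mathcal{P}(V)$ cannot miss any such half-strip in any direction, and that the resulting "two-sided spreading" of the support forces uniform boundedness of the weight multiplicities. The standard technique here (following \cite{Su1}, \cite{LZ1}, \cite{LT1}) is a support/propagation argument: using the brackets $[t^{\bm{m}},E(\bm{n})]=\det{\bm{n}\choose\bm{m}}t^{\bm{m}+\bm{n}}+\delta_{\bm{m}+\bm{n},0}h(\bm{m})$ and the analogous $E$–$E$ bracket, one shows that nonzero weight spaces propagate in all directions of $\Gamma$, and that whenever the support is genuinely two-dimensional (not confined to a half-plane) one can bound $\dim V_{\bm{\mu}}$ by a fixed $N$ independent of $\bm{\mu}$. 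Concretely, I would fix a generic weight $\bm{\mu}$ and exhibit injective maps (built from operators $E(\bm{n})$ with $\det{\bm{n}\choose\cdot}\neq0$) from $V_{\bm{\mu}}$ into a \emph{fixed finite} collection of neighboring weight spaces, iterating to get a uniform bound. The delicate points are handling the degenerate directions where $\det$ vanishes (so that $t^{\bm{m}}$ and $E(\bm{n})$ fail to move weights transversally) and accounting for the central terms $h(\bm{m})$, $f(\bm{m})$ that appear only on the diagonal $\bm{m}+\bm{n}=\bm{0}$; these central corrections are what distinguish the present rank-two Heisenberg-Virasoro setting from the Virasoro-like case in \cite{LT1} and require the finer $\mathcal{H}_{\bm{b_1}}$-module analysis of Propositions \ref{zlimoha} and \ref{zgimfh} rather than a single Heisenberg subalgebra.
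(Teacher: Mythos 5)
Your skeleton is right in outline---Lemma \ref{213} is indeed the pivot, and the real content is the implication ``not GHW $\Rightarrow$ uniformly bounded''---but the heart of the proof, the uniform bound itself, is left as a sketch, and the plan you sketch would not deliver it. Two corrections. First, no single operator $E(\bm{n})$ is (or needs to be) injective on a weight space; what the contrapositive of Lemma \ref{213} gives is that for \emph{every} $\Z$-basis $\{\bm{b_{1}},\bm{b_{2}}\}$ of $\Gamma$ and every nonzero weight vector, the joint kernel $\ker E(\bm{b_{1}})\cap\ker E(\bm{b_{2}})\cap\ker t^{\bm{b_{1}}}$ is zero, so $\dim V_{\bm{\mu}}$ is bounded by the sum of the dimensions of the three target spaces. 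Second---and this is the missing idea---uniformity does not come from propagating through \emph{neighboring} weight spaces: iterating neighbor-to-neighbor inequalities yields bounds that grow with the distance of $\bm{\mu}$ from a base point, not a uniform $N$. The paper instead chooses the $\Z$-basis \emph{depending on the weight}: for $V_{(m_{1},m_{2})}$ it takes $\bm{b_{1}}=-m_{1}\bm{e_{1}}+\bm{e_{2}}$ and $\bm{b_{2}}=(1-m_{1})\bm{e_{1}}+\bm{e_{2}}$ (a unimodular pair), so that all three operators land in the two \emph{fixed} columns with first coordinate $0$ or $1$, giving $\dim V_{(m_{1},m_{2})}\leq 2\dim V_{(0,m_{2}+1)}+\dim V_{(1,m_{2}+1)}$ in a single long jump; one more jump of the same kind in the second coordinate lands in the four fixed spaces $V_{(-1,1)}$, $V_{(-1,2)}$, $V_{(0,1)}$, $V_{(0,2)}$, and the uniform bound follows after just two steps. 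In particular no support analysis is needed at all (your half-strip dichotomy via Proposition \ref{211} is superfluous, though not circular), and the ``delicate points'' you flag are vacuous: the degrees occurring are never opposite, so the central terms $h(\bm{m})$, $f(\bm{m})$ never enter, and nothing beyond unimodularity of the chosen bases is used.

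A smaller but genuine error: your ``alternative'' production of a Lemma \ref{213} vector fails. In $T_{\rho}(\mathcal{H}_{\bm{b_{1}}})=T_{r}$ the elements $E(k\bm{b_{1}})$ and $t^{k\bm{b_{1}}}$ act by the shifts $\rho(E(k\bm{b_{1}}))t^{k+n}$ and $\rho(t^{k\bm{b_{1}}})t^{k+n}$, so unless $\rho=0$ there is no vector annihilated by all $E(k\bm{b_{1}})$, $t^{k\bm{b_{1}}}$, $k\in\N$. The module $M(\bm{b_{1}},\bm{b_{2}},V)$ is GHW with respect to the basis $\{\bm{b_{1}}+\bm{b_{2}},\bm{b_{1}}+2\bm{b_{2}}\}$ (as the Remark after its construction records) because $\wtL_{+}$ kills the top layer---every element of $\Z_{+}(\bm{b_{1}}+\bm{b_{2}})+\Z_{+}(\bm{b_{1}}+2\bm{b_{2}})$ has positive $\bm{b_{2}}$-component---not because of any $E(\bm{b_{1}})$-annihilation. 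To repair your write-up: drop the support dichotomy, and replace the propagation sketch with the explicit weight-dependent bases and the two-step estimate above.
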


\begin{proof} Let $(\lambda_{1},\ \lambda_{2})\in\mathcal{P}(V)$ and let
$V_{\bm{b}}:=V_{(\lambda_{1},\ \lambda_{2})+\bm{b}}$ for $\bm{b}\in\Gamma$.
Then $V=\oplus_{\bm{b}\in\Gamma}V_{\bm{b}}$. If $V$ is not a generalized highest weight module,
for $\bm{m}=(m_{1},m_{2})\in\Gamma$, consider the linear maps
$E(-m_{1}\bm{e_{1}}+\bm{e_{2}}): V_{(m_{1},m_{2})}\rightarrow V_{(0,m_{2}+1)}$,
$E((1-m_{1})\bm{e_{1}}+\bm{e_{2}}):V_{(m_{1},m_{2})}\rightarrow V_{(1,m_{2}+1)}$ and
$t^{-m_{1}\bm{e_{1}}+\bm{e_{2}}}: V_{(m_{1},m_{2})}\rightarrow V_{(0,m_{2}+1)}$. By the Lemma \ref{213}, we have that
\[\text{ker}\ E(-m_{1}\bm{e_{1}}+\bm{e_{2}})\cap\text{ker}\ E((1-m_{1})\bm{e_{1}}+\bm{e_{2}})
\cap\text{ker}\ t^{-m_{1}\bm{e_{1}}+\bm{e_{2}}}=0.\]
This shows that \[\text{dim}\ V_{(m_{1},m_{2})}\leq2\text{dim}\ V_{(0,m_{2}+1)}+\text{dim}\ V_{(1,m_{2}+1)}.\]
Now we consider the linear maps $E(-\bm{e_{1}}-m_{2}\bm{e_{2}}):V_{(0,m_{2}+1)}\rightarrow V_{(-1,1)}$,
$E(-\bm{e_{1}}+(1-m_{2})\bm{e_{2}}):V_{(0,m_{2}+1)}\rightarrow V_{(-1,2)}$ and
$t^{-\bm{e_{1}}-m_{2}\bm{e_{2}}}:V_{(0,m_{2}+1)}\rightarrow V_{(-1,1)}$. With the same reason, we get
\[\text{dim}\ V_{(0,m_{2}+1)}\leq2\text{dim}\ V_{(-1,1)}+\text{dim}\ V_{(-1,2)}.\]
Similarly, we have
\[\text{dim}\ V_{(1,m_{2}+1)}\leq2\text{dim}\ V_{(0,1)}+\text{dim}\ V_{(0,2)}.\]
Thus, $V$ is a uniformly bounded module.
\end{proof}

\section{Nonzero level Harish-Chandra Modules of $\wtL$}

In this section, we study the nonzero level Harish-Chandra module $V$ of $\wtL$, i.e., satisfying
$K_{i}.v=c_{i}v$ for $v\in V$, $\bm{0}\neq(c_{1}, c_{2}, c_{3}, c_{4})\in\C^4$.\\

We denote $[p, q]=\{x\mid x\in\Z, p\leq x\leq q\}$ and similarly for
$(-\infty, p]$, $[q, \infty)$ and $(-\infty, +\infty)$. First, we have:

\begin{thm}\label{nzlHCmiGHWm}
If $V$ is a nonzero level Harish-Chandra module of $\wtL$,
then $V$ is a GHW module.
\end{thm}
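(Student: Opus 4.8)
The plan is to reduce the statement to a single nonexistence fact and then derive it from the Heisenberg representation theory already recorded. By the dichotomy proved at the end of Section~2---every Harish-Chandra module of $\wtL$ is either a GHW module or a uniformly bounded module---it suffices to show that a nonzero level Harish-Chandra module $V$ is \emph{not} uniformly bounded; the GHW alternative then follows automatically. So the whole argument is organized around the claim: if $V$ is a Harish-Chandra module of $\wtL$ whose level $(c_1,c_2,c_3,c_4)$ is nonzero, then $\dim V_{\bm{\lambda}}$ is unbounded as $\bm{\lambda}$ ranges over $\mathcal{P}(V)$.

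To prove this claim I would argue by contradiction, assuming $\dim V_{\bm{\lambda}}\le N$ for all $\bm{\lambda}$. Since the level is nonzero we have $(c_3,c_4)\neq\bm{0}$ or $(c_1,c_2)\neq\bm{0}$. In the first case I pick $\bm{b_1}\in\{\bm{e_1},\bm{e_2}\}$ with $f(\bm{b_1})=b_{11}c_3+b_{12}c_4=a\neq0$; in the remaining case (where in addition $(c_3,c_4)=\bm{0}$) I pick $\bm{b_1}$ with $h(\bm{b_1})=b_{11}c_1+b_{12}c_2=a\neq0$. Fixing any $(\lambda_1,\lambda_2)\in\mathcal{P}(V)$, I then form the single line $W=\oplus_{i\in\Z}V_{(\lambda_1,\lambda_2)+i\bm{b_1}}$, graded by $W_i=V_{(\lambda_1,\lambda_2)+i\bm{b_1}}$. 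Because $E(k\bm{b_1})$ and $t^{k\bm{b_1}}$ each shift the $\bm{b_1}$-degree by $k$, the space $W$ is a $\Z$-graded module, with all graded pieces finite dimensional, over the appropriate Heisenberg subalgebra supplied by Lemma~2.2(1): over $E_{\bm{b_1}}=\<E(\pm k\bm{b_1}),\ f(\bm{b_1})\mid k\in\N\>$ in the first case, and over the mixed Heisenberg subalgebra $\<E(-k\bm{b_1}),\ t^{k\bm{b_1}},\ h(\bm{b_1})\mid k\in\N\>$ in the second; in either case the level of $W$ equals $a\neq0$, the point being that $[E(k\bm{b_1}),E(-k\bm{b_1})]=k\,f(\bm{b_1})$ and $[t^{k\bm{b_1}},E(-k\bm{b_1})]=k\,h(\bm{b_1})$ inject the nonzero scalar into the bracket.

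Now Theorem~\ref{nzlham}, together with the Remark following Proposition~\ref{zlimoha} that extends it to the mixed Heisenberg algebras, applies to $W$: it is completely reducible, and each irreducible summand is isomorphic to some $M^{\varepsilon}(a)$, $\varepsilon\in\{+,-\}$. Since $(\lambda_1,\lambda_2)\in\mathcal{P}(V)$ forces $W\neq0$, at least one such summand occurs. But the graded dimensions of $M^{\varepsilon}(a)$ are unbounded---the component of relative degree $-n$ is spanned by the monomials in the generators $E(-k\bm{b_1})$ (resp.\ $t^{k\bm{b_1}}$) of total degree $n$, so its dimension equals the number of partitions of $n$---whence $\dim W_i=\dim V_{(\lambda_1,\lambda_2)+i\bm{b_1}}$ is unbounded in $i$. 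This contradicts $\dim V_{\bm{\lambda}}\le N$ and proves the claim, so $V$ must be GHW. I expect the only genuine obstacle to be the second case, $(c_1,c_2)\neq\bm{0}$ with $(c_3,c_4)=\bm{0}$: there the relevant algebra is the twisted Heisenberg mixing the $t^{k\bm{b_1}}$ and $E(-k\bm{b_1})$ generators rather than the Virasoro-like $E_{\bm{b_1}}$, and one must confirm that $W$ really carries a nonzero-level graded module structure over it and that the extension of Theorem~\ref{nzlham} quoted in that Remark indeed produces the same partition-type growth; everything else is a routine application of the Section~2 dichotomy and the Heisenberg classification.
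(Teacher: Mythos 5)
Your proposal is correct and follows essentially the same route as the paper: restrict to a weight line $W=\oplus_{i\in\Z}V_{(\lambda_1,\lambda_2)+i\bm{b_1}}$, apply Theorem~2.4 (with the Remark extending it to the mixed Heisenberg subalgebras) to see that the nonzero-level summands $M^{\varepsilon}(a)$ force unbounded weight multiplicities, and then invoke the GHW/uniformly-bounded dichotomy from the end of Section~2. If anything, your explicit case split on $(c_3,c_4)\neq\bm{0}$ versus $(c_1,c_2)\neq\bm{0}$ is a more careful rendering of the paper's terse ``without loss of generality, $K_1$ acts as $c_1\neq0$,'' since a $\Z$-basis change cannot trade the $f$-centers $K_3,K_4$ for the $h$-centers $K_1,K_2$, and your worry about the mixed Heisenberg case is already settled by the paper's Remark~2.7.
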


\begin{proof}
Without loss of generality, we may assume the center element $K_{1}$ acting
as $0\neq c_1\in\C$. Let $(\lambda_{1}, \lambda_{2})\in \mathcal{P}(V)$. Set $W_{0}:=\oplus_{i\in\Z}V_{(\lambda_1,\lambda_2)+i\bm{e_{1}}}\neq0$.
From the Theorem \ref{nzlham}, we see that $W_{0}$ as $\<E(k\bm{e_{1}}),t^{-k\bm{e_{1}}},K_{1}\mid k\in\N\>$-module is completely reducible.
Also from the Theorem \ref{nzlham}, we know that $V$ is not a uniformly bounded module. Thus $V$ is a GHW module.
\end{proof}

\begin{cor}
If $V$ is a uniformly bounded Harish-Chandra module of $\wtL$,
then $K_{i}.v=0$ for $v\in V$, $i=1,2,3,4$.
\end{cor}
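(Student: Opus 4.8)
The plan is to prove the contrapositive: assuming that $V$ is uniformly bounded, I will show that the level $(c_1,c_2,c_3,c_4)$ must vanish, so that each $K_i$ acts as $0$. Suppose instead that $(c_1,c_2,c_3,c_4)\neq\bm{0}$. I will exhibit a single direction $\bm{b_1}\in\Gamma$ along which the weight-space dimensions of $V$ grow without bound, directly contradicting uniform boundedness. This is exactly the mechanism already present in the proof of Theorem \ref{nzlHCmiGHWm}: there it is shown, via Theorem \ref{nzlham}, that a nonzero level Harish-Chandra module cannot be uniformly bounded, and the corollary is the contrapositive of that fact. The only thing to add is that the argument covers all four central charges, not just the case $K_1\neq0$ treated there.

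First I would reduce to a suitable Heisenberg subalgebra. Recall from the first lemma of Section 2 that for any primitive $\bm{b_1}=b_{11}\bm{e_1}+b_{12}\bm{e_2}\in\Gamma$ the spaces $\<E(\pm k\bm{b_1}),\,f(\bm{b_1})\mid k\in\N\>$ and $\<E(k\bm{b_1}),\,t^{-k\bm{b_1}},\,h(\bm{b_1})\mid k\in\N\>$ are Heisenberg subalgebras with central elements $f(\bm{b_1})=b_{11}K_3+b_{12}K_4$ and $h(\bm{b_1})=b_{11}K_1+b_{12}K_2$, respectively. Since $(c_1,c_2,c_3,c_4)\neq\bm{0}$, at least one of the two linear forms $\bm{b_1}\mapsto b_{11}c_1+b_{12}c_2$ and $\bm{b_1}\mapsto b_{11}c_3+b_{12}c_4$ is not identically zero; its kernel is at most a line in $\R^2$, so I can pick a primitive $\bm{b_1}$ (hence part of a $\Z$-basis $\{\bm{b_1},\bm{b_2}\}$) on which the chosen form is nonzero. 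In this way one of these Heisenberg subalgebras acts on $V$ with nonzero central charge $a\neq0$.

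Next I would slice $V$ along $\bm{b_1}$. Fixing $\bm{\mu}\in\mathcal{P}(V)$ and setting $W_0=\bigoplus_{i\in\Z}V_{\bm{\mu}+i\bm{b_1}}$, one gets a nonzero $\Z$-graded module over the chosen Heisenberg subalgebra of level $a\neq0$, since $E(m\bm{b_1})$ and $t^{m\bm{b_1}}$ shift weights by multiples of $\bm{b_1}$. As $V$ is quasi-finite, every graded piece $V_{\bm{\mu}+i\bm{b_1}}$ is finite dimensional, so Theorem \ref{nzlham} applies (for the $h(\bm{b_1})$-type subalgebra in the form noted in the remark following Proposition \ref{zlimoha}, and for $E_{\bm{b_1}}$ directly): $W_0$ is completely reducible and, being nonzero, contains an irreducible summand isomorphic to some $M^{\varepsilon}(a)$. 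The graded dimensions of $M^{\varepsilon}(a)$ are given by the number of partitions of $n$ and hence are unbounded, forcing $\dim V_{\bm{\mu}+i\bm{b_1}}$ to be unbounded as $i$ ranges over $\Z$. This contradicts uniform boundedness, so the level is $\bm{0}$.

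I expect the only delicate points to be the bookkeeping of the first step—checking in each of the cases $(c_1,c_2)\neq\bm{0}$ and $(c_3,c_4)\neq\bm{0}$ that a primitive $\bm{b_1}$ can be selected making the relevant central element act nontrivially—and confirming that the Fock-type summand $M^{\varepsilon}(a)$ genuinely has unbounded graded multiplicities, which is what drives the contradiction. Everything else is a direct appeal to Theorem \ref{nzlham}, precisely as in the proof of Theorem \ref{nzlHCmiGHWm}.
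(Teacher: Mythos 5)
Your proposal is correct and is essentially the paper's own argument: the corollary is stated there as the contrapositive of Theorem \ref{nzlHCmiGHWm}, whose proof likewise slices $V$ along a single direction into $W_0=\bigoplus_{i}V_{\bm{\mu}+i\bm{b_1}}$ and invokes Theorem \ref{nzlham} to conclude $V$ cannot be uniformly bounded. Your two additions—justifying the paper's ``without loss of generality $K_1$ acts nontrivially'' by choosing a primitive $\bm{b_1}$ off the kernel line so that one of $h(\bm{b_1})$, $f(\bm{b_1})$ acts as a nonzero scalar, and noting that the Fock modules $M^{\varepsilon}(a)$ have partition-number graded dimensions, hence unbounded—are exactly the details the paper leaves implicit, not a different route.
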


We assume that $V=\oplus_{\bm{n}\in\Gamma}V_{\bm{\lambda}+\bm{n}}$ is a nontrivial GHW
Harish-Chandra $\wtL$-module with GHW $\bm{\lambda}=(\lambda_{1}, \lambda_{2})$ corresponding to
a $\Z$-basis $B=\{\bm{b_{1}}, \bm{b_{2}}\}$ of $\Gamma$. Without loss of generality, we assume
$\bm{\lambda}=\bm{0}$.

\begin{lem}\label{33}
(1). For any $v\in V$, there exists $p>0$ such that
$E(i\bm{b_{1}}+j\bm{b_{2}})v=t^{i\bm{b_{1}}+j\bm{b_{2}}}v=0$
for all $(i, j)\geq(p, p)$.\\
(2). For any $0\neq v\in V$, $(m_{1}, m_{2})>\bm{0}$, we have $E(-m_{1}\bm{b_{1}}-m_{2}\bm{b_{2}})v\neq0$.\\
(3). If $\bm{b}:=i_{1}\bm{b_{1}}+i_{2}\bm{b_{2}}\in\mathcal{P}(V)$,
then for any $(m_{1}, m_{2})>\bm{0}$, there exists $m\geq0$ such that
$\{x\in\Z\mid\bm{b}+x\bm{a}\in\mathcal{P}(V)\}=(-\infty, m]$,
where $\bm{a}=m_{1}\bm{b_{1}}+m_{2}\bm{b_{2}}$.
\end{lem}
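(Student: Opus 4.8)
The plan is to prove the three assertions in order, using part~(1) as the main tool for~(2) and then using (1) and (2) together for~(3).

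\emph{Part (1).} I would fix $v\in V$, write $v=u\,v_{\bm 0}$ for the GHW vector $v_{\bm 0}$ and some $u\in\U(\wtL)$, and expand $u$ by the PBW theorem into a finite sum of ordered monomials in the root vectors $E(\bm n),t^{\bm n}$. Put $\bm c=i\bm{b_1}+j\bm{b_2}$. Since $E(\bm m)v_{\bm 0}=t^{\bm m}v_{\bm 0}=0$ for all $\bm m\in\Z_+\bm{b_1}+\Z_+\bm{b_2}$, I commute $E(\bm c)$ (and likewise $t^{\bm c}$) to the right through the finitely many generators of a given monomial; each bracket $[E(\bm c),E(\bm n)]$ or $[E(\bm c),t^{\bm n}]$ merely shifts the index $\bm c\mapsto\bm c+\bm n$ up to a scalar, and produces a central term only when $\bm c+\bm n=\bm 0$. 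If $p$ exceeds all $\bm{b_1},\bm{b_2}$-coordinates of the finitely many $\bm n$ occurring in $u$, then for $(i,j)\ge(p,p)$ every index arising in this process stays in $\Z_+\bm{b_1}+\Z_+\bm{b_2}$ and never becomes $\bm 0$, so the whole expression is pushed onto $v_{\bm 0}$ and dies. Formally this is an induction on the length of the monomial, and taking the largest $p$ over the finitely many monomials gives~(1).

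\emph{Part (2).} Writing $\bm a=m_1\bm{b_1}+m_2\bm{b_2}$, note $E(-\bm a)$ sends distinct weight spaces to distinct weight spaces, so it suffices to treat a weight vector $v$. Assume $E(-\bm a)v=0$. By (1) choose $\bm{b}'$ deep in the cone $\Z_+\bm{b_1}+\Z_+\bm{b_2}$, not parallel to $\bm a$, with $E(\bm{b}')v=t^{\bm{b}'}v=0$. Applying $E(\bm{b}')$ to $E(-\bm a)v=0$ and using $E(\bm{b}')v=0$ gives $[E(\bm{b}'),E(-\bm a)]v=0$; since $\bm{b}'\not\parallel\bm a$ we have $\bm{b}'-\bm a\neq\bm 0$ and $\det\binom{-\bm a}{\bm{b}'}\neq0$, whence $E(\bm{b}'-\bm a)v=0$. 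Iterating (the determinant is unchanged by subtracting $\bm a$, and $\bm{b}'-k\bm a\neq\bm 0$ throughout) yields $E(\bm{b}'-k\bm a)v=0$ for all $k\ge0$, and the same computation with $t^{\bm{b}'}$ gives $t^{\bm{b}'-k\bm a}v=0$. As every $\bm w\not\parallel\bm a$ equals $\bm{b}'-k\bm a$ with $\bm{b}'$ deep in the cone, and the remaining $\bm w=\ell\bm a$ are reached from a bracket of two non-parallel vectors, I conclude $E(\bm w)v=t^{\bm w}v=0$ for all $\bm w\in\Gamma\setminus\{\bm 0\}$. Then $\C v$ is a submodule, so $V=\C v$ by irreducibility; but then $f(\bm{b_i})v=[E(\bm{b_i}),E(-\bm{b_i})]v=0$ and $h(\bm{b_i})v=[t^{\bm{b_i}},E(-\bm{b_i})]v=0$, forcing every $K_j$ to act as $0$, contradicting the nonzero level. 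Hence $E(-\bm a)v\neq0$.

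\emph{Part (3).} Downward closure is immediate from (2): if $0\neq u\in V_{\bm b+x\bm a}$ then $E(-\bm a)u$ is a nonzero vector of $V_{\bm b+(x-1)\bm a}$, so $X:=\{x\in\Z:\bm b+x\bm a\in\mathcal P(V)\}$ is closed downward; equivalently $E(-\bm a)\colon V_{\bm b+(x+1)\bm a}\hookrightarrow V_{\bm b+x\bm a}$ is injective, so $\dim V_{\bm b+x\bm a}$ is non-increasing in $x$. As $0\in X$ (because $\bm b\in\mathcal P(V)$) and $X$ is downward closed, it equals $(-\infty,m]$ with $m\ge0$ once I show $X$ is bounded above. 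For boundedness above I would restrict the string $M=\oplus_{x}V_{\bm b+x\bm a}$ to the direction-$\bm a$ Heisenberg-type subalgebra spanned by $E(k\bm a),t^{k\bm a},f(\bm a),h(\bm a)$. If $f(\bm a)$ or $h(\bm a)$ acts nontrivially, $M$ is a $\Z$-graded nonzero-level module with finite-dimensional graded pieces on which, by (1), the raising operators act locally nilpotently; Theorem~\ref{nzlham} and Proposition~\ref{zgimfh} then force $M$ to be of highest-weight type in the $\bm a$-direction, hence bounded above.

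The hard part is the \emph{degenerate} case $f(\bm a)=h(\bm a)=0$, which genuinely occurs for special interior $\bm a$ even though $V$ has nonzero level (there the subalgebra above is abelian and Theorem~\ref{nzlham} does not apply). Here I would use that, since the level is nonzero, $\bm a$ completes to a $\Z$-basis $\{\bm a,\bm d\}$ with $f(\bm d)\neq0$ or $h(\bm d)\neq0$; if the string were unbounded above, then applying transverse operators built from $\bm d$ to the vectors $E(-x\bm a)u$ (which are nonzero by~(2) applied to $x\bm a$) and exploiting the nonvanishing central scalar $h(x\bm a-\bm d)=-h(\bm d)$ (resp. $f$) produced by the relevant bracket, in the style of the independence argument of Lemma~\ref{nzHCmoL}, would manufacture infinitely many linearly independent vectors inside a single weight space, contradicting quasi-finiteness. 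Arranging the base vectors so that the cross terms do not swamp the surviving central contribution is the main technical obstacle I expect to confront in this last step.
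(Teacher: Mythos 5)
Your parts (1) and (2) are correct and essentially coincide with the paper's proof. Part (1) is the same PBW-and-commute computation, with the same kind of bound $p$ read off from the negative parts of the exponents occurring in a monomial. For part (2), where you run an explicit descent $E(\bm{b^{\prime}}-k\bm{a})$, $t^{\bm{b^{\prime}}-k\bm{a}}$ to annihilate every root vector, the paper argues more compactly: it notes that $E(-m_{1}\bm{b_{1}}-m_{2}\bm{b_{2}})$, $E(\bm{b_{1}}+p(m_{1}\bm{b_{1}}+m_{2}\bm{b_{2}}))$, $E(\bm{b_{2}}+p(m_{1}\bm{b_{1}}+m_{2}\bm{b_{2}}))$ and the two corresponding $t$-elements generate $L$, so $Lv=0$ at once; your iteration proves the same annihilation by hand, which is fine. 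One slip in (2): the contradiction you should invoke is with the standing assumption (fixed just before the lemma) that $V$ is a \emph{nontrivial} GHW Harish-Chandra module, not with nonzero level. You have already shown $V=\C v$ is the trivial module, which contradicts nontriviality directly; citing the level instead silently restricts the lemma, whereas it is later applied, through Lemmas \ref{35}--\ref{310} and Proposition \ref{311}, to GHW Harish-Chandra modules whose level may be zero.

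The genuine gap is in part (3), which the paper does not reprove at all (it cites Lemma 3.2 of \cite{LT1}) and which your proposal leaves unproved. Your downward closure via (2) is fine, but your entire argument for boundedness above hinges on some central scalar being nonzero: Case A needs $f(\bm{a})\neq0$ or $h(\bm{a})\neq0$, and your degenerate case needs a completion $\{\bm{a},\bm{d}\}$ with $f(\bm{d})\neq0$ or $h(\bm{d})\neq0$, i.e.\ nonzero level. But nonzero level is not a hypothesis of the lemma, and nontrivial level-zero GHW Harish-Chandra modules genuinely exist and must satisfy (3) --- for instance $M\big(\bm{b_{1}},\bm{b_{2}},T_{\rho}(\mathcal{H}_{\bm{b_{1}}})(\bm{c},\bm{\lambda})\big)$ with $\bm{c}=\bm{0}$ and $T_{\rho}(\mathcal{H}_{\bm{b_{1}}})=T_{r}$, $r\in\N$; indeed Proposition \ref{311} forces $c_{1}=c_{2}=0$ and permits $c_{3}=c_{4}=0$ for all modules in its conclusion. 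For such $V$ every $f(\bm{m})$, $h(\bm{m})$ acts as zero, so Theorem \ref{nzlham} never applies in any direction, no basis vector $\bm{d}$ with nonzero central value exists, and the Lemma \ref{nzHCmoL}-style independence trick has no surviving central term $h(x\bm{a}-\bm{d})$ to exploit: both branches of your case analysis are empty. You also concede that the degenerate case is unfinished even when the level is nonzero (``the main technical obstacle''). Finally, Case A as written is loose: Theorem \ref{nzlham} and Proposition \ref{zgimfh} concern completely reducible, respectively irreducible, modules, so you would still need to argue that the local annihilation from (1) excludes all summands free in the positive $\bm{a}$-direction and that finite-dimensionality of the graded pieces allows only finitely many highest-weight-type summands. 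As it stands, part (3) requires a level-free argument such as the one in \cite{LT1}.
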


\begin{proof}
Let $v_{0}$ be the GHW vector of $V$ corresponding to the $\Z$-basis $B$.\\
$(1)$ Since $v=uv_{0}$ for some $u\in\U(\wtL)$, then $u$ can be written as a linear combination
of elements of the form $u_{m, n}=t^{i_{1}\bm{b_{1}}+j_{1}\bm{b_{2}}}\cdots
t^{i_{m}\bm{b_{1}}+j_{m}\bm{b_{2}}}E(k_{1}\bm{b_{1}}+l_{1}\bm{b_{2}})\cdots
E(k_{n}\bm{b_{1}}+l_{n}\bm{b_{2}})$. Without loss of generality, we may assume
$u=u_{m, n}$. Take $p_{1}=-\Sigma_{i_{s}<0}i_{s}-\Sigma_{k_{t}<0}k_{t}+1$,
$p_{2}=-\Sigma_{j_{s}<0}j_{s}-\Sigma_{l_{t}<0}l_{t}+1$. Fix $m\in\Z_{+}$. By induction on $n$, one gets
$E(i\bm{b_{1}}+j\bm{b_{2}})v=t^{i\bm{b_{1}}+j\bm{b_{2}}}v=0$
for all $(i, j)\geq(p_{1}, p_{2})$, take $p=\text{max}\{p_{1}, p_{2}\}$, then the result follows.\\
$(2)$ Suppose $E(-m_{1}\bm{b_{1}}-m_{2}\bm{b_{2}})v=0$ for some $0\neq v\in V$ and some $(m_1,m_2)>\bm{0}$. Let $p$
be as in the proof $(1)$. Then one gets \[E(-m_{1}\bm{b_{1}}-m_{2}\bm{b_{2}})v
=E(\bm{b_{1}}+p(m_{1}\bm{b_{1}}+m_{2}\bm{b_{2}}))v=E(\bm{b_{2}}+p(m_{1}\bm{b_{1}}+m_{2}\bm{b_{2}}))v=0\]
\[t^{\bm{b_{1}}+p(m_{1}\bm{b_{1}}+m_{2}\bm{b_{2}})}v=t^{\bm{b_{2}}+p(m_{1}\bm{b_{1}}+m_{2}\bm{b_{2}})}v=0.\]
Note that the Lie algebra $L$ is generated by these elements, we have $Lv=0$,
which contradicts with $V$ is a nontrivial irreducible module.\\
$(3)$ See the Lemma $3.2$ in $\cite{LT1}$.
\end{proof}

The following lemma follows from the Lemma \ref{33} and the proof is given in \cite{LT1}.

\begin{lem}\label{34}
There exists a $\Z$-basis $B^{\prime}=\{\bm{b^{\prime}_{1}}, \bm{b^{\prime}_{2}}\}$
of $\Gamma$ such that\\
$(1)$ $V$ is a GHW module with weight $\bm{0}$ corresponding to the $\Z$-basis $B^{\prime}$;\\
$(2)$ $\{\Z_{+}\bm{b^{\prime}_{1}}+\Z_{+}\bm{b^{\prime}_{2}}\}\cap\mathcal{P}(V)={\bm{0}}$;\\
$(3)$ $\{-\Z_{+}\bm{b^{\prime}_{1}}-\Z_{+}\bm{b^{\prime}_{2}}\}\subseteq\mathcal{P}(V)$;\\
$(4)$ If $i_{1}\bm{b^{\prime}_{1}}+i_{2}\bm{b^{\prime}_{2}}\notin\mathcal{P}(V)$, then
$k_{1}\bm{b^{\prime}_{1}}+k_{2}\bm{b^{\prime}_{2}}\notin\mathcal{P}(V)$ for $(k_{1}, k_{2})\geq(i_{1}, i_{2})$;\\
$(5)$ If $i_{1}\bm{b^{\prime}_{1}}+i_{2}\bm{b^{\prime}_{2}}\in\mathcal{P}(V)$, then
$k_{1}\bm{b^{\prime}_{1}}+k_{2}\bm{b^{\prime}_{2}}\in\mathcal{P}(V)$ for $(k_{1}, k_{2})\leq(i_{1}, i_{2})$;\\
$(6)$ For any $\bm{0}\neq(k_{1}, k_{2})\geq\bm{0}$, $(i_{1}, i_{2})\in\Gamma$, we have
\[\{x\in\Z\mid i_{1}\bm{b^{\prime}_{1}}+i_{2}\bm{b^{\prime}_{2}}
+x(k_{1}\bm{b^{\prime}_{1}}+k_{2}\bm{b^{\prime}_{2}})\in\mathcal{P}(V)\}=(-\infty, m]\]
for some $m\in\Z$.
\end{lem}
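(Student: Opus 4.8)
The plan is to translate Lemma~\ref{33} into combinatorial constraints on the weight support $S:=\mathcal P(V)\subseteq\Gamma$ and then to choose $B'$ so that, in $B'$-coordinates, $S$ becomes the lattice points of a downward cone with apex at $\bm0$. Writing points in the coordinates of the current basis $\{\bm{b_1},\bm{b_2}\}$: Lemma~\ref{33}(2) says $S$ is closed under subtracting strictly positive vectors (if $\bm b\in S$ and $(m_1,m_2)>\bm0$ then $\bm b-(m_1\bm{b_1}+m_2\bm{b_2})\in S$); Lemma~\ref{33}(3) says that along every strictly positive rational direction $\bm a$ the set $\{x:\bm b+x\bm a\in S\}$ is a lower ray $(-\infty,m]$; and the generalized highest weight hypothesis together with Lemma~\ref{33}(1) says $S$ is bounded above in the positive cone. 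Since $\bm0\in S$, these force the real recession cone $R=\{\bm d\in\R^2: \bm b+\R_{\ge0}\bm d\subseteq\overline{S}\ \text{for}\ \bm b\in S\}$ to be a pointed two-dimensional cone that contains the open third quadrant, is disjoint from the open first quadrant, and has its two extreme rays in the closed second and fourth quadrants.

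First I would pin down these two extreme rays. Using quasi-finiteness and Lemma~\ref{33}(3) applied to all rational positive directions, I would show the rays are rational and choose primitive integral generators $-\bm{b_1'}$ and $-\bm{b_2'}$, with signs arranged so that $R=\R_{\ge0}(-\bm{b_1'})+\R_{\ge0}(-\bm{b_2'})$. The essential point is that $\{\bm{b_1'},\bm{b_2'}\}$ is an honest $\Z$-basis of $\Gamma$, i.e.\ $\det\binom{\bm{b_1'}}{\bm{b_2'}}=\pm1$ by the determinant criterion of Section~2. To guarantee this I would not take arbitrary rays but select the basis extremally: among all $\Z$-bases with respect to which $V$ is a generalized highest weight module (a nonempty family, since it contains $\{\bm{b_1},\bm{b_2}\}$), choose one whose positive cone is tightest around $R$. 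Two primitive lattice vectors bounding an empty cone are Farey neighbours and hence have determinant $\pm1$ automatically. This is exactly the combinatorial step of \cite{LT1}.

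With $B'$ fixed I would verify the six assertions from the closure properties. Property~(3), $-\Z_+\bm{b_1'}-\Z_+\bm{b_2'}\subseteq S$, follows from $\bm0\in S$ and the strict down-closure of Lemma~\ref{33}(2), with a boundary-direction argument for the two axes. Property~(2), $\{\Z_+\bm{b_1'}+\Z_+\bm{b_2'}\}\cap S=\{\bm0\}$, is precisely where the extremal choice is used: a nonzero support point in the closed positive $B'$-orthant would, via Lemma~\ref{33}(3), produce a strictly positive recession direction or a strictly tighter GHW basis, contradicting the construction. Properties~(4) and~(5) are the non-strict monotonicity of the complement and of $S$; the strict versions are immediate from Lemma~\ref{33}(2), and the upgrade to the coordinate axes uses the ray structure of Lemma~\ref{33}(3) for directions just inside $R$. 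Property~(6) is Lemma~\ref{33}(3) restated in the new coordinates, with down-closure supplying the $-\infty$ end. Finally, for~(1) I would take any $0\neq v_0'\in V_{\bm0}$; by~(2) it is annihilated by $E(\bm m)$ and $t^{\bm m}$ for all $\bm m\in\Z_+\bm{b_1'}+\Z_+\bm{b_2'}$, and since $V$ is irreducible it is generated by $v_0'$, so $v_0'$ is a GHW vector for $B'$ (compare Lemma~\ref{213}).

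I expect the decisive difficulty to be the unimodularity of $B'$ (equivalently, the existence of the extremal GHW basis) and the accompanying upgrade from the strict closure of Lemma~\ref{33}(2) to the non-strict, axis-inclusive monotonicity of~(4)--(5); these are the genuinely combinatorial points, where quasi-finiteness of the weight multiplicities and the ray structure~\ref{33}(3) must be combined. Once the correct basis is in place, the remaining verifications are routine bookkeeping with the cone $R$.
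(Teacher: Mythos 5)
Your strategy has a genuine structural gap at its very first step: the claim that the listed constraints force the real recession cone $R$ of $S=\mathcal{P}(V)$ to be \emph{pointed} with extreme rays in the closed second and fourth quadrants is false, and the modules classified in this very paper are counterexamples. Take $V\simeq M\big(\bm{b_{1}},\bm{b_{2}},T_{\rho}(\mathcal{H}_{\bm{b_{1}}})(\bm{c},\bm{\lambda})\big)$ with $T_{\rho}(\mathcal{H}_{\bm{b_{1}}})=T_{r}$, $r\geq1$, and $\rho$ exp-polynomial (e.g.\ $\rho(E(k\bm{b_1}))=\rho(t^{k\bm{b_1}})=\frac1r\sum_{\zeta^{r}=1}\zeta^{k}$); by Theorem 4.9 this is a nontrivial GHW Harish-Chandra module, and its top row of support is $r\Z\bm{b_{1}}$, unbounded in \emph{both} horizontal directions. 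Hence $R$ contains the full line $\R\bm{b_{1}}$ and is a closed half-plane, not a pointed cone; its two extreme rays are the opposite horizontal rays, whose primitive generators have determinant $0$, so your recipe "take $-\bm{b_1'},-\bm{b_2'}$ on the extreme rays of $R$" cannot even begin. Nothing in Lemma \ref{33} prevents this: part (3) controls rays only in \emph{strictly} positive directions, and Lemma \ref{38} explicitly contemplates supports containing $(-\infty,p]\cup[q,\infty)$ along a line. The resolution in the correct proof is that the cone of $B'$ is \emph{not} the recession cone at all: one must choose $\Z_{+}\bm{b_1'}+\Z_{+}\bm{b_2'}$ strictly inside one open half-plane avoiding the line $\R\bm{b_{1}}$ (in the example above, $\bm{b_1'}=\bm{b_2}$, $\bm{b_2'}=\bm{b_1}+\bm{b_2}$ works), and this is why property (6) — which quantifies over boundary directions $(k_1,0)$, $(0,k_2)$ and over \emph{all} base points $(i_1,i_2)$, not just points of $\mathcal{P}(V)$ — can hold; your dismissal of (6) as "Lemma \ref{33}(3) restated in new coordinates" misses exactly this content.

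Two further points. First, your unimodularity step is both misstated and unnecessary: "two primitive lattice vectors bounding an empty cone" is not a meaningful criterion (every two-dimensional cone contains interior lattice points; the correct statement concerns the fundamental parallelogram), and in the actual argument unimodularity is free because one only ever performs $SL_{2}(\Z)$ changes of basis of the type $\bm{b_1}\mapsto\bm{b_1}+\bm{b_2}$, $\bm{b_2}\mapsto\bm{b_1}+2\bm{b_2}$ (compare Lemma \ref{213} and the remark after the definition of $M(V)$), rather than selecting rays from an a priori irrational or degenerate cone. Second, note that the paper does not reprove this lemma: it cites Lemma 3.3 of \cite{LT1}, whose proof proceeds by such explicit unimodular base changes guided by Lemma \ref{33}(1)--(3), irreducibility, and quasi-finiteness, with no recession-cone geometry. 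So your proposal is not a repackaging of the paper's route with one hard step deferred; the step you flag as "the decisive difficulty" (existence of the extremal basis) is based on a picture of $S$ that the classified modules themselves violate, and the verifications of (2)--(6) built on that picture would have to be redone along the lines of \cite{LT1}.
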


From now on, we assume that $V$ is a nontrivial GHW Harish-Chandra module with GHW $\bm{0}$ corresponding
to the $\Z$-basis $B=\{\bm{b_{1}}, \bm{b_{2}}\}$ and $B$ satisfies the properties in the Lemma \ref{34}.
To characterize the nontrivial GHW Harish-Chandra module $V$ of $\wtL$, we need the
following lemmas. The proof is in \cite{LT1} (c.f. $\cite{LZ1}$, $\cite{Su1}$).

\begin{lem}\label{35}
If there exist an integer $s>0$ and $(i_{1}, i_{2})$, $(k_{1}, k_{2})\in\Gamma$
such that $k_{1}$, $k_{2}$ are coprime, and
\[\{i_{1}\bm{b_{1}}+i_{2}\bm{b_{2}}+x_{1}s\bm{b_{1}}+x_{2}s\bm{b_{2}}\mid(x_{1}, x_{2})\in\Gamma,
k_{1}x_{1}+k_{2}x_{2}=0\}\cap\mathcal{P}(V)=\emptyset,\]
then $V\simeq M\big(\bm{b^{\prime}_{1}}, \bm{b^{\prime}_{2}}, T_{\rho}(\mathcal{H}_{\bm{b^{\prime}_{1}}})(\bm{c},\bm{\lambda})\big)$ for
some $\Z$-basis $\{\bm{b^{\prime}_{1}}, \bm{b^{\prime}_{2}}\}$ of $\Gamma$ and some $\rho\in\mathcal{E}_{\bm{b^{\prime}_{1}}}$,
where $f(\bm{b^{\prime}_{1}}),h(\bm{b^{\prime}_{1}}),f(\bm{b^{\prime}_{2}}),h(\bm{b^{\prime}_{2}})$ act as scalars $c_1=0,c_2=0,c_3,c_4$
and $\bm{c}=(c_1,c_2,c_3,c_4)$.
\end{lem}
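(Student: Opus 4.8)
The plan is to reduce Lemma \ref{35} to the characterization of Harish-Chandra modules already obtained in Proposition \ref{211}. That proposition says: if there exist $\lambda_1,\lambda_2\in\C$ with $V_{(\lambda_1,\lambda_2)}\neq 0$ and $\mathcal{P}(V)\cap\big((\lambda_1,\lambda_2)+\Z\bm{b_1}+\N\bm{b_2}\big)=\emptyset$ (for a suitable $\Z$-basis), then $V$ is of the required induced form $M\big(\bm{b^{\prime}_{1}},\bm{b^{\prime}_{2}},T_{\rho}(\mathcal{H}_{\bm{b^{\prime}_{1}}})(\bm{c},\bm{\lambda})\big)$ with $c_1=c_2=0$. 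So the entire content of the proof is to manufacture, from the vanishing hypothesis about the arithmetic-progression slice, a new $\Z$-basis $\{\bm{b^{\prime}_{1}},\bm{b^{\prime}_{2}}\}$ together with a weight $\bm{\mu}\in\mathcal{P}(V)$ such that the whole half-plane $\bm{\mu}+\Z\bm{b^{\prime}_{1}}+\N\bm{b^{\prime}_{2}}$ misses $\mathcal{P}(V)$, and then invoke Proposition \ref{211}.

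First I would set up the change of basis. Since $k_1,k_2$ are coprime, $\{\bm{a_{1}},\bm{a_{2}}\}$ with $\bm{a_{1}}=k_1\bm{b_1}+k_2\bm{b_2}$ completed by a suitable $\bm{a_{2}}$ is again a $\Z$-basis of $\Gamma$; equivalently, one can choose integers so that the line $\{k_1x_1+k_2x_2=0\}$ becomes one coordinate axis. Under this rescaling by $s$ (passing to the sublattice $s\Gamma$ in the directions of $\bm{b_1},\bm{b_2}$), the hypothesis states exactly that an entire coset of the one-dimensional sublattice $\Z\bm{a_{1}}$ (shifted to pass through $i_1\bm{b_1}+i_2\bm{b_2}$) avoids $\mathcal{P}(V)$. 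The task is to promote this codimension-one vanishing to the half-plane vanishing that Proposition \ref{211} requires. This is where Lemma \ref{34} does the heavy lifting: properties $(4)$ and $(6)$ say that once a weight is absent along a positive direction, all further weights in that direction are absent, and that along any positive ray the weight set is a lower-interval $(-\infty,m]$. Combining the empty line with monotonicity in a transverse positive direction forces an entire half-plane to be empty.

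Concretely, I would argue as follows. Let $\bm{a}=s(m_1\bm{b_1}+m_2\bm{b_2})$ be a transverse positive direction (with $(m_1,m_2)>\bm{0}$) chosen so that $\{\bm{a_{1}},\bm{a}\}$ spans $s\Gamma$ or an appropriate finite-index overlattice. For each point $\bm{p}$ on the empty line, property $(6)$ of Lemma \ref{34} applied to the ray $\bm{p}+\Z_{+}\bm{a}$ gives $\{x\mid \bm{p}+x\bm{a}\in\mathcal{P}(V)\}=(-\infty,m_{\bm{p}}]$; but since $\bm{p}\notin\mathcal{P}(V)$ already, and using that the line itself is empty, one deduces $\bm{p}+x\bm{a}\notin\mathcal{P}(V)$ for all $x\geq 0$ along every such ray. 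Sweeping $\bm{p}$ over the empty line then fills out a half-plane $\bm{\mu}+\Z\bm{b^{\prime}_{1}}+\N\bm{b^{\prime}_{2}}$ disjoint from $\mathcal{P}(V)$, where $\bm{b^{\prime}_{1}}$ is the primitive vector along the empty line and $\bm{b^{\prime}_{2}}$ is chosen from the direction $\bm{a}$ so that $\{\bm{b^{\prime}_{1}},\bm{b^{\prime}_{2}}\}$ is a genuine $\Z$-basis of $\Gamma$ (this last normalization, turning the index-$s$ sublattice statement into a full-lattice statement, needs the coprimality of $k_1,k_2$ to guarantee primitivity). Picking a nonzero weight $\bm{\mu}$ on the boundary of this half-plane and applying Proposition \ref{211} yields $V\simeq M\big(\bm{b^{\prime}_{1}},\bm{b^{\prime}_{2}},T_{\rho}(\mathcal{H}_{\bm{b^{\prime}_{1}}})(\bm{c},\bm{\lambda})\big)$ with $c_1=c_2=0$, as claimed.

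The main obstacle I anticipate is the bookkeeping that converts the \emph{sublattice} vanishing (the hypothesis is phrased over $s\bm{b_1},s\bm{b_2}$ and over the line $k_1x_1+k_2x_2=0$) into a \emph{full-lattice} half-plane statement with an honest $\Z$-basis of $\Gamma$. One has to be careful that the scaling factor $s$ and the coprimality of $(k_1,k_2)$ interact correctly: coprimality ensures the empty line's primitive generator $\bm{b^{\prime}_{1}}$ is part of a $\Z$-basis, while the factor $s$ must be absorbed without leaving gaps in $\mathcal{P}(V)$ that would spoil the half-plane exhaustion. Because this is exactly the kind of lattice combinatorics carried out for the Virasoro-like algebra in \cite{LT1} (and its companions \cite{LZ1}, \cite{Su1}), I expect the argument to go through verbatim once the preliminary Lemmas \ref{33} and \ref{34} are in place, which is why the proof can legitimately be deferred to those references.
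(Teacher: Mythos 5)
The paper itself offers no argument for Lemma \ref{35}: it is quoted with the remark that the proof is in \cite{LT1}, and your overall route --- convert the empty $s$-spaced line into an empty half-plane using the monotonicity properties of Lemma \ref{34}, then invoke Proposition \ref{211} --- is exactly the argument being cited, so in outline you and the paper agree. But the one step carrying all the content is wrong as you describe it. Since $k_1,k_2$ are coprime, the hypothesis says that $\{\bm{p}+ts\bm{d}\mid t\in\Z\}$ avoids $\mathcal{P}(V)$, where $\bm{p}=i_1\bm{b_1}+i_2\bm{b_2}$ and $\bm{d}=k_2\bm{b_1}-k_1\bm{b_2}$. Sweeping single rays $\bm{p}+ts\bm{d}+\Z_{+}\bm{a}$ over these base points produces only a discrete family of parallel rays, not a half-plane; worse, in the essential case $k_1k_2>0$ (after replacing $(k_1,k_2)$ by $(-k_1,-k_2)$ if necessary, assume both positive) even the full quadrants $\bm{p}+ts\bm{d}+\Z_{+}\bm{b_1}+\Z_{+}\bm{b_2}$ supplied by Lemma \ref{34}(4) miss the intermediate points $\bm{p}+j\bm{d}$, $0<j<s$, on the line itself: $\bm{q}\geq\bm{p}+ts\bm{d}$ componentwise with $\bm{q}-\bm{p}\in\Z\bm{d}$ forces $\bm{q}$ to be one of the base points, because the two coordinates of $(j-ts)\bm{d}$ have opposite signs unless $j=ts$. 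What is actually true, and what must be proved, is that the staircase union of those quadrants contains the \emph{shifted} half-plane $\{\bm{q}=x_1\bm{b_1}+x_2\bm{b_2}\mid\varphi(\bm{q})\geq\varphi(\bm{p})+sk_1k_2\}$, where $\varphi(x_1\bm{b_1}+x_2\bm{b_2})=k_1x_1+k_2x_2$ is the linear form vanishing on $\Z\bm{d}$: writing $\bm{q}-\bm{p}=\alpha\bm{d}+\beta\bm{b_1}$ (so $\beta k_1=\varphi(\bm{q})-\varphi(\bm{p})\geq sk_1k_2$) and choosing $t$ with $0\leq ts-\alpha<s$ gives $\bm{q}\geq\bm{p}+ts\bm{d}$. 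So the empty half-plane sits at $\varphi$-distance up to $sk_1k_2$ above the empty line rather than hugging it; your sweep leaves exactly that strip unaccounted for --- harmless for the conclusion, but it means the quadrant computation, not the ray sweep, is what closes the argument.

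Two further points are missing. First, the sign dichotomy: if $k_1k_2<0$ then $\bm{d}$ or $-\bm{d}$ is a positive direction, and Lemma \ref{34}(4) applied to the base points empties all of $\Gamma$ (every lattice point dominates $\bm{p}+ts\bm{d}$ for suitable $t$), contradicting Lemma \ref{34}(3); so under the standing assumption that $V$ is nontrivial this case is vacuous, while if $k_1k_2=0$ the quadrant union is already a closed coordinate half-plane and no shift occurs. Second, Proposition \ref{211} requires a nonzero weight sitting under the empty half-plane, and the boundary of the half-plane you construct may itself carry no weight, so ``picking a nonzero weight on the boundary'' is not justified as stated; instead, since $\mathcal{P}(V)\neq\emptyset$ and $\varphi$ is now bounded above on $\mathcal{P}(V)$, take $\bm{\mu}\in\mathcal{P}(V)$ with $\varphi(\bm{\mu})$ maximal. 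With $\bm{b^{\prime}_{1}}=\bm{d}$ and $\bm{b^{\prime}_{2}}=m\bm{b_1}+n\bm{b_2}$ where $k_1m+k_2n=1$ --- the only place coprimality is used; no passage to a sublattice $s\Gamma$ is needed, as $s$ enters only through the offset $sk_1k_2$ --- the set $\bm{\mu}+\Z\bm{b^{\prime}_{1}}+\N\bm{b^{\prime}_{2}}$ lies in levels $\varphi>\varphi(\bm{\mu})$ and is therefore disjoint from $\mathcal{P}(V)$, and Proposition \ref{211} applies, with $c_1=c_2=0$ coming from the remark following it. With these repairs your outline coincides with the proof in \cite{LT1} that the paper defers to.
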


\begin{lem}\label{37}
If there exist $(i_{1}, i_{2})$, $(0, 0)\neq(k_{1}, k_{2})\in\Gamma$ such that
\[\{i_{1}\bm{b_{1}}+i_{2}\bm{b_{2}}+x(k_{1}\bm{b_{1}}+k_{2}\bm{b_{2}})\mid x\in\Z\}
\cap\mathcal{P}(V)=\emptyset,\]
then $V\simeq M\big(\bm{b^{\prime}_{1}}, \bm{b^{\prime}_{2}}, T_{\rho}(\mathcal{H}_{\bm{b^{\prime}_{1}}})(\bm{c},\bm{\lambda})\big)$ for
some $\Z$-basis $\{\bm{b^{\prime}_{1}}, \bm{b^{\prime}_{2}}\}$ of $\Gamma$ and some $\rho\in\mathcal{E}_{\bm{b^{\prime}_{1}}}$.
\end{lem}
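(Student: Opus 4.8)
The plan is to deduce Lemma~\ref{37} directly from Lemma~\ref{35}, by recognizing that an arbitrary missing ``line'' $\{i_{1}\bm{b_{1}}+i_{2}\bm{b_{2}}+x(k_{1}\bm{b_{1}}+k_{2}\bm{b_{2}})\mid x\in\Z\}$ is exactly one of the sublattice cosets whose emptiness is already the hypothesis of Lemma~\ref{35}, once the direction vector $(k_{1},k_{2})$ is factored as its gcd times a primitive vector. Thus no new representation-theoretic input is needed: the argument is pure lattice bookkeeping, all the structural content having been packaged into Lemma~\ref{35} (and ultimately \cite{LT1}).

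First I would set $d=\gcd(k_{1},k_{2})>0$ and write $(k_{1},k_{2})=d(w_{1},w_{2})$ with $(w_{1},w_{2})$ primitive; this is legitimate including in the degenerate cases $k_{1}=0$ or $k_{2}=0$, since $(k_{1},k_{2})\neq(0,0)$ guarantees $d>0$. Putting $(\kappa_{1},\kappa_{2}):=(w_{2},-w_{1})$, the given missing line is rewritten as
\[\{i_{1}\bm{b_{1}}+i_{2}\bm{b_{2}}+x_{1}d\bm{b_{1}}+x_{2}d\bm{b_{2}}\mid(x_{1},x_{2})\in\Gamma,\ \kappa_{1}x_{1}+\kappa_{2}x_{2}=0\}.\]
Indeed the integer solutions of $\kappa_{1}x_{1}+\kappa_{2}x_{2}=0$ are precisely $(x_{1},x_{2})=t(w_{1},w_{2})$ for $t\in\Z$, so that $x_{1}d\bm{b_{1}}+x_{2}d\bm{b_{2}}=t\,d(w_{1}\bm{b_{1}}+w_{2}\bm{b_{2}})=t(k_{1}\bm{b_{1}}+k_{2}\bm{b_{2}})$, and the two parametrizations sweep out literally the same subset of $\Gamma$. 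Since $(w_{1},w_{2})$ is primitive, $(\kappa_{1},\kappa_{2})=(w_{2},-w_{1})$ is coprime, so the choice $s=d$ together with the coprime pair $(\kappa_{1},\kappa_{2})$ in place of $(k_{1},k_{2})$ meets all the requirements of Lemma~\ref{35}. Applying Lemma~\ref{35} then yields $V\simeq M\big(\bm{b^{\prime}_{1}},\bm{b^{\prime}_{2}},T_{\rho}(\mathcal{H}_{\bm{b^{\prime}_{1}}})(\bm{c},\bm{\lambda})\big)$ for a suitable $\Z$-basis $\{\bm{b^{\prime}_{1}},\bm{b^{\prime}_{2}}\}$ of $\Gamma$ and some $\rho\in\mathcal{E}_{\bm{b^{\prime}_{1}}}$, which is exactly the assertion.

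I do not expect a genuine obstacle here: the real work, namely constructing the basis $\{\bm{b^{\prime}_{1}},\bm{b^{\prime}_{2}}\}$ and identifying the irreducible quotient, lives entirely inside Lemma~\ref{35}. The only point requiring care is the verification that the two descriptions of the missing set coincide, that is, that rescaling by $s=d$ together with the perpendicular-kernel constraint $\kappa_{1}x_{1}+\kappa_{2}x_{2}=0$ reproduces exactly the arithmetic progression with step $k_{1}\bm{b_{1}}+k_{2}\bm{b_{2}}$; checking the coprimality of $(\kappa_{1},\kappa_{2})$ and the boundary cases where one coordinate of $(k_{1},k_{2})$ vanishes is routine. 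Hence Lemma~\ref{37} is best viewed as the coordinate-free repackaging of Lemma~\ref{35}.
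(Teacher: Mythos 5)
Your reduction is correct: factoring $(k_{1},k_{2})=d(w_{1},w_{2})$ with $(w_{1},w_{2})$ primitive and applying Lemma~\ref{35} with $s=d$ and the coprime pair $(\kappa_{1},\kappa_{2})=(w_{2},-w_{1})$ does reproduce exactly the missing line, since the solution set of $\kappa_{1}x_{1}+\kappa_{2}x_{2}=0$ is $\Z(w_{1},w_{2})$. This is essentially the same argument as the one the paper relies on (it gives no proof beyond citing \cite{LT1}, where the corresponding lemma is deduced from the analogue of Lemma~\ref{35} by precisely this gcd-and-orthogonal-vector bookkeeping), so there is nothing further to add.
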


\begin{lem}\label{38}
If there exist $(0, 0)\neq(m, n)\in\Gamma$, $(i, j)\in\Gamma$, $p$, $q\in\Z$
such that
\[\{x\in\Z\mid i\bm{b_{1}}+j\bm{b_{2}}+x(m\bm{b_{1}}+n\bm{b_{2}})\in\mathcal{P}(V)\}
\supseteq(-\infty, p]\cup[q, \infty),\]
then $V\simeq M\big(\bm{b^{\prime}_{1}}, \bm{b^{\prime}_{2}}, T_{\rho}(\mathcal{H}_{\bm{b^{\prime}_{1}}})(\bm{c},\bm{\lambda})\big)$ for
some $\Z$-basis $\{\bm{b^{\prime}_{1}}, \bm{b^{\prime}_{2}}\}$ of $\Gamma$ and some $\rho\in\mathcal{E}_{\bm{b^{\prime}_{1}}}$.
\end{lem}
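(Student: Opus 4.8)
The plan is to reduce the statement to Lemma~\ref{37} by exhibiting a line parallel to $\bm{a}:=m\bm{b_{1}}+n\bm{b_{2}}$ that is disjoint from $\mathcal{P}(V)$. Throughout I identify $\Gamma$ with $\Z^{2}$ via the basis $B$, writing a weight as $(A,B)=A\bm{b_{1}}+B\bm{b_{2}}$, and I use the structural properties of Lemma~\ref{34}: $\mathcal{P}(V)$ is downward closed in the product order (parts $(4),(5)$), contains the negative orthant (part $(3)$), meets the closed positive orthant only in $\bm{0}$ (part $(2)$), and restricts to a single lower tail $(-\infty,M]$ along every nonnegative direction (part $(6)$).

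First I would carry out a sign reduction on the direction $(m,n)$. If $(m,n)\geq\bm{0}$, then part $(6)$ of Lemma~\ref{34} forces the support along this direction to be a set $(-\infty,M]$, bounded above, contradicting the hypothesis $[q,\infty)\subseteq\{x\mid\cdots\in\mathcal{P}(V)\}$; applying the same remark to $-(m,n)$ rules out $(m,n)\leq\bm{0}$, and the case where one coordinate vanishes is excluded likewise. Hence $m$ and $n$ have strictly opposite signs, and after replacing $x$ by $-x$ (which interchanges the two tails) I may assume $m>0>n$. Then the tail $x\to+\infty$ sits deep in the fourth quadrant and the tail $x\to-\infty$ deep in the second quadrant.

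Next I recast the target geometrically. Let $\phi(A,B)=|n|A+mB$, so that $\phi$ is constant on every line parallel to $\bm{a}$ and is monotone non-decreasing for the product order. The given line is the level set $\{\phi=\mu_{0}\}$ with $\mu_{0}=|n|i+mj$, and every parallel lattice line is a level set $\{\phi=\mu\}\cap\Gamma$, which (whenever nonempty) runs in the primitive direction underlying $\bm{a}$. The whole argument then turns on the claim that $\phi$ is bounded above on $\mathcal{P}(V)$: once this is known, any admissible $\mu$ exceeding $\sup\{\phi(P)\mid P\in\mathcal{P}(V)\}$ gives a lattice line parallel to $\bm{a}$ disjoint from $\mathcal{P}(V)$, and Lemma~\ref{37} (or Lemma~\ref{35}) produces the desired isomorphism $V\simeq M(\bm{b^{\prime}_{1}},\bm{b^{\prime}_{2}},T_{\rho}(\mathcal{H}_{\bm{b^{\prime}_{1}}})(\bm{c},\bm{\lambda}))$.

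The heart of the proof, and the step I expect to be the main obstacle, is establishing this upper bound, because the two tails lie deep in the second and fourth quadrants while downward closure fills everything below the given line, so that no naive parallel translate is visibly empty. I would extract the bound from finite-dimensionality of the weight spaces via the Heisenberg subalgebra $E_{\bm{a}}=\langle E(\pm k\bm{a}),f(\bm{a})\mid k\in\N\rangle$, under which $W_{\ell}:=\bigoplus_{x\in\Z}V_{(i,j)+x\bm{a}}$ is a $\Z$-graded module with finite-dimensional graded pieces of level $f(\bm{a})=mc_{1}+nc_{3}$. If $f(\bm{a})\neq0$, Theorem~\ref{nzlham} makes $W_{\ell}$ completely reducible into summands $M^{\pm}(f(\bm{a}))$, and the presence of both tails forces summands of both signs, so $\dim V_{(i,j)+x\bm{a}}\to\infty$ as $x\to\pm\infty$; propagating this growth through the transverse nonnegative directions and part $(6)$ of Lemma~\ref{34}, one sees that support rising too far above the line would either overflow a finite-dimensional weight space or place a weight in the open positive orthant, contradicting Lemma~\ref{34}$(2)$, which is exactly the sought bound on $\phi$. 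If instead $f(\bm{a})=0$ (and likewise $h(\bm{a})=0$), the level-zero classification in Proposition~\ref{zlimoha} and Corollary~\ref{zimoh} forces $W_{\ell}$ to be of $T_{r}$-type, with bounded graded dimensions and periodic support, so that $\phi$ is bounded outright. Converting the multiplicity growth of the nonzero-level case into the geometric bound on $\phi$ is the delicate point; everything else is bookkeeping with Lemma~\ref{34} and the Heisenberg structure.
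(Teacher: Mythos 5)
Your overall skeleton is reasonable, and two pieces of it are correct: ruling out $(m,n)\geq\bm{0}$ and $(m,n)\leq\bm{0}$ via Lemma \ref{34}(6) is exactly right, and reducing to Lemma \ref{37} by exhibiting a lattice line parallel to $\bm{a}=m\bm{b_1}+n\bm{b_2}$ disjoint from $\mathcal{P}(V)$ would indeed suffice (note the paper itself gives no internal proof of this lemma but defers it to \cite{LT1}, so your attempt must stand on its own). The problem is that your reduction converts the lemma into the claim that $\phi=|n|A+mB$ is bounded above on $\mathcal{P}(V)$, which is essentially the entire content of the statement, and you never prove it. In the nonzero-level branch, Theorem \ref{nzlham} does force summands of both types $M^{+}$ and $M^{-}$ in $W_{\ell}$ (only finitely many summands can occur, by finite-dimensionality of the graded pieces), hence $\dim V_{(i,j)+x\bm{a}}\to\infty$ in both directions; but unbounded growth along a single line is perfectly compatible with quasi-finiteness, so there is no contradiction yet, and the sentence ``propagating this growth through the transverse nonnegative directions \dots one sees that support rising too far above the line would either overflow a finite-dimensional weight space or place a weight in the open positive orthant'' is precisely the missing argument, not a proof of it. You flag this yourself as ``the delicate point,'' but a flagged gap is still a gap, and it sits at the heart of the lemma.

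The level-zero branch is not merely incomplete but incorrect as written. Proposition \ref{zlimoha} and Corollary \ref{zimoh} classify \emph{irreducible} $\Z$-graded modules; $W_{\ell}$ has no reason to be irreducible over $E_{\bm{a}}$ or $\mathcal{H}_{\bm{a}}$, and level-zero $\Z$-graded modules over these Heisenberg algebras are not completely reducible (Theorem \ref{nzlham} has no level-zero analogue), so nothing forces $W_{\ell}$ to be ``of $T_r$-type with bounded graded dimensions.'' Worse, even granting bounded dimensions along $\ell$, the conclusion ``so that $\phi$ is bounded outright'' is a non sequitur: $\phi$ bounded on $\mathcal{P}(V)$ is a statement about the whole weight support, and the combinatorics of Lemma \ref{34} alone do not exclude arbitrarily high weights off the line --- for instance, adjoining a downward-closed column $\{-\bm{b_1}+B\bm{b_2}\mid B\leq K\}$ with $K$ arbitrarily large to the support violates none of the properties (2)--(6) of Lemma \ref{34}, so the bound on $\phi$ must come from the module action and quasi-finiteness (e.g., by producing a weight vector annihilated by a suitable half-lattice and invoking Proposition \ref{211}, as in the arguments of \cite{LT1} that the paper cites), and your proposal supplies no such mechanism. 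Two smaller slips: the level of $E_{\bm{a}}$ is $f(\bm{a})$, the combination of $c_3,c_4$ determined by the $\{\bm{e_1},\bm{e_2}\}$-coordinates of $\bm{a}$ (not ``$mc_1+nc_3$''); and when $f(\bm{a})=0\neq h(\bm{a})$ you must work with the mixed Heisenberg subalgebras $\<E(k\bm{a}),\ t^{-k\bm{a}},\ h(\bm{a})\mid k\in\N\>$ of Remark 2.7, which your parenthetical ``(and likewise $h(\bm{a})=0$)'' does not actually address.
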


\begin{lem}\label{39}
If there exist $(i, j)$, $(k, l)\in\Gamma$ and $x_{1}, x_{2}, x_{3}\in\Z$
with $x_{1}<x_{2}<x_{3}$ such that
\begin{align}i\bm{b_{1}}+j\bm{b_{2}}+x_{1}(k\bm{b_{1}}+l\bm{b_{2}})\notin\mathcal{P}(V),\end{align}
\begin{align}i\bm{b_{1}}+j\bm{b_{2}}+x_{2}(k\bm{b_{1}}+l\bm{b_{2}})\in\mathcal{P}(V),\end{align} and
\begin{align}i\bm{b_{1}}+j\bm{b_{2}}+x_{3}(k\bm{b_{1}}+l\bm{b_{2}})\notin\mathcal{P}(V),\end{align}
then $V\simeq M\big(\bm{b^{\prime}_{1}}, \bm{b^{\prime}_{2}}, T_{\rho}(\mathcal{H}_{\bm{b^{\prime}_{1}}})(\bm{c},\bm{\lambda})\big)$ for
some $\Z$-basis $\{\bm{b^{\prime}_{1}}, \bm{b^{\prime}_{2}}\}$ of $\Gamma$ and some $\rho\in\mathcal{E}_{\bm{b^{\prime}_{1}}}$.
\end{lem}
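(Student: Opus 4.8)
The plan is to show that the hypothesis forces $\mathcal{P}(V)$ to avoid an entire line of lattice points, so that the statement reduces to one of the previously established Lemmas~\ref{37} and \ref{35}, each of which already yields the asserted form $M(\bm{b^{\prime}_{1}}, \bm{b^{\prime}_{2}}, T_{\rho}(\mathcal{H}_{\bm{b^{\prime}_{1}}})(\bm{c},\bm{\lambda}))$. Throughout I use the normalization fixed before Lemma~\ref{35}: $V$ is a nontrivial GHW module with GHW $\bm{0}$ relative to $B=\{\bm{b_{1}},\bm{b_{2}}\}$ and $\mathcal{P}(V)\subseteq\Gamma$ satisfies all the monotonicity and half-line properties of Lemma~\ref{34}. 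Write $\bm{d}=k\bm{b_{1}}+l\bm{b_{2}}$ and $\bm{p}=i\bm{b_{1}}+j\bm{b_{2}}$, and set $\bm{q}=\bm{p}+x_{2}\bm{d}\in\mathcal{P}(V)$, $C_{1}=\bm{p}+x_{1}\bm{d}$, $C_{3}=\bm{p}+x_{3}\bm{d}$.

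First I would normalize the direction. If $(k,l)\geq\bm{0}$ or $(k,l)\leq\bm{0}$ (both coordinates of the same sign, a zero coordinate allowed), then after replacing $\bm{d}$ by $-\bm{d}$ if needed, Lemma~\ref{34}(6) makes $\{x\in\Z : \bm{p}+x\bm{d}\in\mathcal{P}(V)\}$ a half-line $(-\infty,m]$, which cannot contain $x_{2}$ while excluding the smaller $x_{1}$. Hence $k$ and $l$ must have strictly opposite signs, and after negating I may assume $k>0>l$. Now $C_{1}$ lies up-left and $C_{3}$ down-right of the weight $\bm{q}$, and since $C_{1},C_{3}\notin\mathcal{P}(V)$, Lemma~\ref{34}(4) produces two disjoint forbidden north-east cones $(C_{1}+\Z_{+}\bm{b_{1}}+\Z_{+}\bm{b_{2}})\cap\mathcal{P}(V)=\emptyset$ and $(C_{3}+\Z_{+}\bm{b_{1}}+\Z_{+}\bm{b_{2}})\cap\mathcal{P}(V)=\emptyset$. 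The key dichotomy is on the slices $R(b)=\{a\in\Z : a\bm{b_{1}}+b\bm{b_{2}}\in\mathcal{P}(V)\}$: if some $R(b)=\emptyset$ then the progression $\{b\bm{b_{2}}+x\bm{b_{1}}\mid x\in\Z\}$ misses $\mathcal{P}(V)$ and Lemma~\ref{37} applies; otherwise, by Lemma~\ref{34}(6) in the direction $\bm{b_{1}}$ together with \ref{34}(5), each slice has the form $R(b)=(-\infty,m(b)]$ with $m(b)\in\Z$ and $m$ non-increasing, so $\mathcal{P}(V)$ is the region under a monotone staircase.

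In the staircase case I would read the three conditions at $C_{1}$, $\bm{q}$, $C_{3}$ as inequalities for $m$; using that $m$ is non-increasing they pin its average slope near $\bm{q}$ to the rational value $k/|l|$, steeper than $-k/|l|$ above $\bm{q}$ and shallower below. The crux is then to \emph{globalize} this: I would feed the two opposite forbidden cones back into the construction, sliding the pattern along the primitive direction $(k^{\prime},l^{\prime})$ of slope $k/|l|$ (so $\gcd(k^{\prime},l^{\prime})=1$) and using quasi-finiteness to force a whole scaled lattice line beyond $C_{1}$ to lie in the forbidden region; this matches the hypothesis of Lemma~\ref{35} with the appropriate scaling $s$, giving the conclusion. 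I expect this globalization to be the main obstacle, since the slope estimate coming from $C_{1}$ and $C_{3}$ is only local: one must iterate the out-in-out configuration and invoke the finiteness of the index sets in Lemma~\ref{34} and the freeness statements in the proof of Lemma~\ref{33} to guarantee that the iteration terminates and that a genuine line — rather than a single progression — is avoided. The remaining bookkeeping, namely checking that $(k^{\prime},l^{\prime})$ extends to a $\Z$-basis and matching the level $\bm{c}=(0,0,c_{3},c_{4})$, then follows exactly as in \cite{LT1}.
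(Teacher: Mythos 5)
Your proposal correctly handles the easy preliminary reductions (the same\-/sign case $(k,l)\geq\bm{0}$ or $\leq\bm{0}$ is vacuous by Lemma~\ref{34}(6), and an empty slice gives Lemma~\ref{37}), but the entire load-bearing step --- the ``globalization'' from the local out--in--out configuration to a whole avoided line matching Lemma~\ref{35} or \ref{37} --- is left unproved, and in fact it cannot be proved by the means you invoke. Your argument uses only the shape constraints that Lemma~\ref{34} imposes on $\mathcal{P}(V)$, i.e.\ that $\mathcal{P}(V)$ is a staircase region $\{a\bm{b_{1}}+b\bm{b_{2}}\mid a\leq m(b)\}$ with $m$ non-increasing. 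But there are such staircases that exhibit the out--in--out pattern yet avoid no lattice line at all: take $m(b)=-b|b|$. This $m$ is non-increasing, satisfies Lemma~\ref{34}(2)--(6), grows quadratically as $b\to-\infty$ and decays quadratically as $b\to+\infty$, so \emph{every} lattice line eventually enters the region on its $b\to-\infty$ end (hence Lemmas~\ref{35} and \ref{37} never apply), and no line meets the region in a pattern $(-\infty,p]\cup[q,\infty)$ (hence Lemma~\ref{38} never applies either); yet for any large $q_{2}>0$ the slopes above and below $b=q_{2}$ straddle $2q_{2}$, so the three points $(-q_{2}^{2},q_{2})+x\,(2q_{2},-1)$ for $x=-1,0,1$ realize out--in--out. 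So no amount of iterating slope estimates on the weight set alone can terminate in the hypothesis of Lemma~\ref{35}; any proof must use the module structure, not just $\mathcal{P}(V)$. (Incidentally, your worry about needing ``a genuine line rather than a single progression'' is a misreading: Lemma~\ref{37} already accepts any single progression; the real problem is that no progression need be avoided.)

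The paper's proof supplies exactly the representation-theoretic ingredient your sketch is missing. After normalizing $x_{1}<x_{2}=0<x_{3}=1$ and setting $\bm{b^{\prime}_{1}}=k\bm{b_{1}}+l\bm{b_{2}}$, completed to a basis by $\bm{b^{\prime}_{2}}$, one may assume (else Lemma~\ref{38} finishes) that some $s$ satisfies $i\bm{b_{1}}+j\bm{b_{2}}+\bm{b^{\prime}_{2}}+s\bm{b^{\prime}_{1}}\notin\mathcal{P}(V)$. Then a weight vector $v$ at $i\bm{b_{1}}+j\bm{b_{2}}$ is annihilated by the six elements $E(x_{1}\bm{b^{\prime}_{1}})$, $E(\bm{b^{\prime}_{1}})$, $E(\bm{b^{\prime}_{2}}+s\bm{b^{\prime}_{1}})$, $t^{x_{1}\bm{b^{\prime}_{1}}}$, $t^{\bm{b^{\prime}_{1}}}$, $t^{\bm{b^{\prime}_{2}}+s\bm{b^{\prime}_{1}}}$, and --- this is the crux, using $x_{1}<0$ --- these generate a subalgebra containing all $E(p\bm{b^{\prime}_{1}}+q\bm{b^{\prime}_{2}})$, $t^{p\bm{b^{\prime}_{1}}+q\bm{b^{\prime}_{2}}}$ with $p\in\Z$, $q\in\N$. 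Hence a full half-plane of weights is vacant, $\mathcal{P}(V)\cap\big(i\bm{b_{1}}+j\bm{b_{2}}+\Z\bm{b^{\prime}_{1}}+\N\bm{b^{\prime}_{2}}\big)=\emptyset$, and Proposition~\ref{211} (not Lemma~\ref{35} or \ref{37}) identifies $V$. In particular this argument also shows a posteriori that staircases like $m(b)=-b|b|$ never occur as weight sets of irreducible modules --- which is precisely the information invisible to the combinatorial approach you propose. Your sketch contains none of these steps (no annihilated vector, no generation statement, no appeal to Proposition~\ref{211}), so the gap is not bookkeeping but the core of the proof.
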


\begin{proof}
Without loss of generality, we may assume $k, l$ are coprime. Thus we can choose
$(m, n)\in\Gamma$ with $kn-lm=1$. Let $\bm{b^{\prime}_{1}}=k\bm{b_{1}}+l\bm{b_{2}}$,
$\bm{b^{\prime}_{2}}=m\bm{b_{1}}+n\bm{b_{2}}$, then
$\{\bm{b^{\prime}_{1}}, \bm{b^{\prime}_{2}}\}$ is a $\Z$-basis of $\Gamma$.
Replacing $x_{2}$ by the largest $x<x_{3}$ with
$i\bm{b_{1}}+j\bm{b_{2}}+x(k\bm{b_{1}}+l\bm{b_{2}})\in\mathcal{P}(V)$,
then replacing $x_{3}$ by $x_{2}+1$ and $(i, j)$ by $(i, j)+x_{2}(k, l)$, we can assume
\begin{align}x_{1}<x_{2}=0<x_{3}=1.\end{align}
We may assume that there exists $s\in\Z$
with \begin{align}i\bm{b_{1}}+j\bm{b_{2}}+\bm{b^{\prime}_{2}}+s\bm{b^{\prime}_{1}}
=(i+m)\bm{b_1}+(j+n)\bm{b_2}+s(k\bm{b_{1}}+l\bm{b_{2}})\notin\mathcal{P}(V).\end{align}
Otherwise from the Lemma \ref{38}, we are done.
Thus by $(3.1)$, $(3.2)$, $(3.3)$, $(3.4)$ and $(3.5)$, we have
\[E(x_{1}\bm{b^{\prime}_{1}})v_{i\bm{b_{1}}+j\bm{b_{2}}}=E(x_1(k\bm{b_{1}}+l\bm{b_{2}}))v_{i\bm{b_{1}}+j\bm{b_{2}}}=0,\ t^{x_{1}\bm{b^{\prime}_{1}}}v_{i\bm{b_{1}}+j\bm{b_{2}}}=t^{x_1(k\bm{b_{1}}+l\bm{b_{2}})}v_{i\bm{b_{1}}+j\bm{b_{2}}}=0,\]
\[E(\bm{b^{\prime}_{1}})v_{i\bm{b_{1}}+j\bm{b_{2}}}=E(k\bm{b_{1}}+l\bm{b_{2}})v_{i\bm{b_{1}}+j\bm{b_{2}}}=0,\ t^{\bm{b^{\prime}_{1}}}v_{i\bm{b_{1}}+j\bm{b_{2}}}=t^{k\bm{b_{1}}+l\bm{b_{2}}}v_{i\bm{b_{1}}+j\bm{b_{2}}}=0,\]and
\[E(\bm{b^{\prime}_{2}}+s\bm{b^{\prime}_{1}})v_{i\bm{b_{1}}+j\bm{b_{2}}}=0,\
t^{\bm{b^{\prime}_{2}}+s\bm{b^{\prime}_{1}}}v_{i\bm{b_{1}}+j\bm{b_{2}}}=0,\]
where $0\neq v_{i\bm{b_{1}}+j\bm{b_{2}}}\in V_{i\bm{b_{1}}+j\bm{b_{2}}}$.
Note that since $x_{1}<0$, we have that $\{E(p\bm{b^{\prime}_{1}}+q\bm{b^{\prime}_{2}}),\ t^{p\bm{b^{\prime}_{1}}+q\bm{b^{\prime}_{2}}}\mid
p\in\Z, q\in\N\}$ belongs to the subalgebra generated by
\[\{E(x_{1}\bm{b^{\prime}_{1}}),\ E(\bm{b^{\prime}_{1}}),\ E(\bm{b^{\prime}_{2}}+s\bm{b^{\prime}_{1}}),\
t^{x_{1}\bm{b^{\prime}_{1}}},\ t^{\bm{b^{\prime}_{1}}},\ t^{\bm{b^{\prime}_{2}}+s\bm{b^{\prime}_{1}}}\}.\] We obtain $E(p\bm{b^{\prime}_{1}}+q\bm{b^{\prime}_{2}})v_{i\bm{b_{1}}+j\bm{b_{2}}}
=t^{p\bm{b^{\prime}_{1}}+q\bm{b^{\prime}_{2}}}v_{i\bm{b_{1}}+j\bm{b_{2}}}=0$ for
$p\in\Z, q\in\N$.
Since $\{\bm{b^{\prime}_{1}}, \bm{b^{\prime}_{2}}\}$ is a $\Z$-basis of $\Gamma$, and $V$ is irreducible, from PBW theorem, we have
$V=\U(\wtL)v_{i\bm{b_{1}}+j\bm{b_{2}}}$ and
\[\{i\bm{b_{1}}+j\bm{b_{2}}+\Z\bm{b^{\prime}_{1}}+\N\bm{b^{\prime}_{2}}\}\cap\mathcal{P}(V)
=\emptyset.\] Thus the results follows from the Proposition \ref{211}.
\end{proof}

\begin{lem}\label{310}
If there exist $i>0$, $j<0$ and $0\neq v_{\bm{a}}\in V_{\bm{a}}$,
$\bm{b}=m\bm{b_{1}}+n\bm{b_{2}}\neq\bm{0}$, such that $E(i\bm{b})v_{\bm{a}}=0$,
$E(j\bm{b})v_{\bm{a}}=0$,
then $V\simeq M\big(\bm{b^{\prime}_{1}}, \bm{b^{\prime}_{2}}, T_{\rho}(\mathcal{H}_{\bm{b^{\prime}_{1}}})(\bm{c},\bm{\lambda})\big)$ for
some $\Z$-basis $\{\bm{b^{\prime}_{1}}, \bm{b^{\prime}_{2}}\}$ of $\Gamma$ and some $\rho\in\mathcal{E}_{\bm{b^{\prime}_{1}}}$.
\end{lem}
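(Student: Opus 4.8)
The plan is to restrict $V$ to the subalgebras supported on the line $\Z\bm{b}$, read off the shape of the weight support $\mathcal{P}(V)$ along $\bm{b}$, and feed the resulting pattern into one of Lemmas \ref{37}, \ref{38}, \ref{39}. First I would reduce to the case that $\bm{b}$ is primitive: if $\bm{b}=d\bm{b}_{0}$ with $\bm{b}_{0}$ primitive and $d\geq 1$, then $E(id\bm{b}_{0})v_{\bm{a}}=E(jd\bm{b}_{0})v_{\bm{a}}=0$ with $id>0>jd$, so we may replace $(\bm{b},i,j)$ by $(\bm{b}_{0},id,jd)$. Next I would record that $f(\bm{b})$ acts as $0$. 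When $j=-i$ this is immediate from applying $[E(i\bm{b}),E(-i\bm{b})]=f(i\bm{b})=i\,f(\bm{b})$ to $v_{\bm{a}}$. In general, view $W:=\oplus_{x\in\Z}V_{\bm{a}+x\bm{b}}$ as a quasi-finite $\Z$-graded module over the Heisenberg algebra $E_{\bm{b}}=\<E(k\bm{b}),\,f(\bm{b})\mid k\in\Z\setminus\{0\}\>$. If $f(\bm{b})\neq 0$, then Theorem \ref{nzlham} decomposes $W$ into Fock summands $M^{\varepsilon}(a)$ on which the negative modes ($\varepsilon=+$) resp.\ the positive modes ($\varepsilon=-$) act freely, hence injectively; writing $v_{\bm{a}}=v^{+}+v^{-}$ along this decomposition, $E(j\bm{b})v_{\bm{a}}=0$ forces $v^{+}=0$ and $E(i\bm{b})v_{\bm{a}}=0$ forces $v^{-}=0$, so $v_{\bm{a}}=0$, a contradiction. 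Hence $f(\bm{b})=0$.

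Then I would use Lemma \ref{33}(2) to locate $\bm{b}$: since $E(\bm{w})v_{\bm{a}}\neq 0$ for every $\bm{w}\in -\N\bm{b_{1}}-\N\bm{b_{2}}$, neither $i\bm{b}$ nor $j\bm{b}$ can lie in this open negative quadrant, which rules out $\bm{b}>\bm{0}$ and $\bm{b}<\bm{0}$ and leaves the mixed-sign directions (and the axis-aligned ones $\bm{b}=\pm\bm{b_{1}},\pm\bm{b_{2}}$). Fixing such a $\bm{b}$, I would analyze the $\bm{b}$-support $P:=\{x\in\Z\mid\bm{a}+x\bm{b}\in\mathcal{P}(V)\}$. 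With $f(\bm{b})=0$ in hand, the structure of $W$ over the two Heisenberg algebras $\<E(k\bm{b}),t^{-k\bm{b}},h(\bm{b})\mid k\in\N\>$ and $\<E(-k\bm{b}),t^{k\bm{b}},h(\bm{b})\mid k\in\N\>$ (central charge $h(\bm{b})$) is governed by Theorem \ref{nzlham} when $h(\bm{b})\neq 0$ and by Proposition \ref{zlimoha} when $h(\bm{b})=0$; in either case the simultaneous annihilation of $v_{\bm{a}}$ by one positive mode $E(i\bm{b})$ and one negative mode $E(j\bm{b})$ obstructs $W$ from being a single two-sided (highest- or lowest-weight) Fock module. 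This should constrain $P$ to one of three shapes: bounded, a union of two disjoint opposite rays, or so degenerate that some full translate of $\Z\bm{b}$ misses $\mathcal{P}(V)$.

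From each shape the conclusion is delivered by the tools already in place. If $P$ is bounded, choosing $x_{2}\in P$ together with $x_{1},x_{3}\notin P$ and $x_{1}<x_{2}<x_{3}$ gives the out--in--out configuration of Lemma \ref{39}. If $P\supseteq(-\infty,p]\cup[q,\infty)$, Lemma \ref{38} applies on the line through $\bm{a}$ in direction $\bm{b}$. If an entire translate $i'\bm{b_{1}}+j'\bm{b_{2}}+\Z\bm{b}$ misses $\mathcal{P}(V)$, Lemma \ref{37} applies. In every case one obtains $V\simeq M\big(\bm{b^{\prime}_{1}},\bm{b^{\prime}_{2}},T_{\rho}(\mathcal{H}_{\bm{b^{\prime}_{1}}})(\bm{c},\bm{\lambda})\big)$ for a suitable $\Z$-basis $\{\bm{b^{\prime}_{1}},\bm{b^{\prime}_{2}}\}$ and some $\rho\in\mathcal{E}_{\bm{b^{\prime}_{1}}}$, which is exactly the assertion.

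The step I expect to be the main obstacle is the boundary case where $P$ is a single infinite ray, that is bounded on exactly one side, together with the axis-aligned directions $\bm{b}=\pm\bm{b_{1}},\pm\bm{b_{2}}$ for which Lemma \ref{33}(2) yields no constraint. There neither Lemma \ref{38} nor Lemma \ref{39} is directly available on the line through $\bm{a}$, and I would have to exploit both annihilation conditions at once: since $f(\bm{b})=0$, the operators $E(\pm k\bm{b})$ commute with the relevant Heisenberg algebra, so they preserve its space of highest-weight vectors, and I would use this together with the injectivity statement of Lemma \ref{33}(2) to propagate the one-sided bound into a transverse direction (producing the missing line for Lemma \ref{37}) or to contradict the finiteness of the weight spaces. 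Correctly managing the interplay of the two central charges $f(\bm{b})=0$ and $h(\bm{b})$ across the two Heisenberg subalgebras in this one-sided case is where the real difficulty lies.
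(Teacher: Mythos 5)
Your preliminary reductions are fine: the reduction to primitive $\bm{b}$, and the Heisenberg argument via Theorem \ref{nzlham} showing that $f(\bm{b})$ must act as zero, are both correct (though the paper's own proof needs neither of them, since its bracketing only ever involves pairs of elements whose degrees are not opposite, so no central terms appear). The genuine gap is exactly where you flag ``the main obstacle'': the case in which the support along a line in direction $\bm{b}$ is a single one-sided ray. Your claimed trichotomy for $P$ (bounded, union of two opposite rays, or a wholly missing translate) is unjustified --- nothing in Theorem \ref{nzlham} or Proposition \ref{zlimoha} excludes one-sided support, and Lemma \ref{34}(6) shows that rays of the form $(-\infty,m]$ are precisely the expected shape of the support of a GHW module along a direction, so this is the generic situation rather than a degenerate boundary case. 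Your proposed fix (``propagate the one-sided bound into a transverse direction'') names no mechanism; supplying that mechanism is the actual content of the lemma.

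The paper fills this in concretely, and along different lines from your sketch: writing $\bm{b}=s\bm{b^{\prime}_{1}}$ with $\bm{b^{\prime}_{1}}$ primitive and completing to a basis $\{\bm{b^{\prime}_{1}},\bm{b^{\prime}_{2}}\}$, it examines, for \emph{every} $q\neq0$, the shifted parallel line $\bm{a}+q\bm{b^{\prime}_{2}}+\Z\bm{b^{\prime}_{1}}$, not the line through $\bm{a}$ that your $P$ records. If some such line misses $\mathcal{P}(V)$, or shows an out--in--out pattern, or contains two opposite rays, then Lemmas \ref{37}, \ref{39}, \ref{38} respectively finish the proof, just as in your outline. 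Otherwise each $A_{q}=\{x\mid \bm{a}+q\bm{b^{\prime}_{2}}+x\bm{b^{\prime}_{1}}\in\mathcal{P}(V)\}$ is one-sided, say $(-\infty,p_{q}]$; then for $x\gg0$ the weights $\bm{a}+q\bm{b^{\prime}_{2}}+(-jxs\pm1)\bm{b^{\prime}_{1}}$ lie outside $\mathcal{P}(V)$, so $E(q\bm{b^{\prime}_{2}}-jxs\bm{b^{\prime}_{1}}\pm\bm{b^{\prime}_{1}})v_{\bm{a}}=t^{q\bm{b^{\prime}_{2}}-jxs\bm{b^{\prime}_{1}}\pm\bm{b^{\prime}_{1}}}v_{\bm{a}}=0$, and repeated bracketing with $E(js\bm{b^{\prime}_{1}})$ --- which kills $v_{\bm{a}}$ by hypothesis, and whose bracket with any element of $\bm{b^{\prime}_{2}}$-degree $q\neq0$ carries a nonzero determinant coefficient --- walks these vanishings down step by step to $x=0$, yielding $E(q\bm{b^{\prime}_{2}}\pm\bm{b^{\prime}_{1}})v_{\bm{a}}=t^{q\bm{b^{\prime}_{2}}\pm\bm{b^{\prime}_{1}}}v_{\bm{a}}=0$ (the case $A_{q}=[p_{q},\infty)$ is symmetric, using $E(is\bm{b^{\prime}_{1}})$ with $i>0$). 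Taking suitable $q$ then annihilates $v_{\bm{a}}$ by $E(\pm(\bm{b^{\prime}_{1}}+\bm{b^{\prime}_{2}}))$, $E(\pm(\bm{b^{\prime}_{1}}+2\bm{b^{\prime}_{2}}))$ and the corresponding $t$-elements, a generating set of $L$, so $V$ would be trivial --- a contradiction, which eliminates the one-sided case altogether. This transport of two-sided annihilation across all parallel lines is absent from your proposal, so as it stands your argument is incomplete at its decisive step.
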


\begin{proof}
Write $(m, n)=s(m^{\prime}, n^{\prime})$ with
$m^{\prime}, n^{\prime}$ coprime and $s\geq1$. Then we can choose $(m_{2}, n_{2})\in\Gamma$ with
$n^{\prime}m_{2}-m^{\prime}n_{2}=1$. Let $\bm{b^{\prime}_{1}}=m^{\prime}\bm{b_{1}}+n^{\prime}\bm{b_{2}}$,
$\bm{b^{\prime}_{2}}=m_{2}\bm{b_{1}}+n_{2}\bm{b_{2}}$, then
$\{\bm{b^{\prime}_{1}}, \bm{b^{\prime}_{2}}\}$ is a $\Z$-basis of $\Gamma$. Fix any $0\neq q\in\Z$.\\
$\textbf{Case\ 1}.$ If $\{\bm{a}+q\bm{b^{\prime}_{2}}+x\bm{b^{\prime}_{1}}\mid x\in\Z\}\cap\mathcal{P}(V)=\emptyset$,
then by the Lemma $3.7$, we are done.\\
$\textbf{Case\ 2}.$ If there exist integers $x_1<x_2<x_3$ with $\bm{a}+q\bm{b^{\prime}_{2}}+x_2\bm{b^{\prime}_{1}}\in\mathcal{P}(V)$ and
$\bm{a}+q\bm{b^{\prime}_{2}}+x_i\bm{b^{\prime}_{1}}\notin\mathcal{P}(V)$, $i=1,3$, then by the Lemma \ref{310}, we are done.\\
$\textbf{Case\ 3}.$ If there exist $m,n\in\Z$ with $(-\infty,m]\cup[n,\infty)\subseteq\{x\in\Z\mid\bm{a}+q\bm{b^{\prime}_{2}}+x\bm{b^{\prime}_{1}}\in\mathcal{P}(V)\}$,
then by the Lemma \ref{39}, we are done.

Now if the above three cases don't occur, we know that there exists some integer $p_{q}$ such that
$A_{q}:=\{x\in\Z\mid\bm{a}+q\bm{b^{\prime}_{2}}+x\bm{b^{\prime}_{1}}\in\mathcal{P}(V)\}
=(-\infty, p_{q}]$ or $[p_{q}, \infty)$. We first assume $A_{q}=(-\infty, p_{q}]$. Thus
\[E(q\bm{b^{\prime}_{2}}-jxs\bm{b^{\prime}_{1}}\pm\bm{b^{\prime}_{1}})v_{\bm{a}}
=t^{q\bm{b^{\prime}_{2}}-jxs\bm{b^{\prime}_{1}}\pm\bm{b^{\prime}_{1}}}v_{\bm{a}}=0\]
for sufficient large integer $x>0$. Since $E(j\bm{b})v_{\bm{c}}=E(js\bm{b^{\prime}_{1}})v_{\bm{a}}=0$, we can obtain
\[E(q\bm{b^{\prime}_{2}}\pm\bm{b^{\prime}_{1}})v_{\bm{a}}
=t^{q\bm{b^{\prime}_{2}}\pm\bm{b^{\prime}_{1}}}v_{\bm{a}}=0.\] If $A_{q}=[p_{q}, \infty)$, with similar argument,
we can also obtain \[E(q\bm{b^{\prime}_{2}}\pm\bm{b^{\prime}_{1}})v_{\bm{a}}
=t^{q\bm{b^{\prime}_{2}}\pm\bm{b^{\prime}_{1}}}v_{\bm{a}}=0.\]
This implies
\[E(\pm(\bm{b^{\prime}_{1}}+\bm{b^{\prime}_{2}}))v_{\bm{a}}
=E(\pm(\bm{b^{\prime}_{1}}+2\bm{b^{\prime}_{2}}))v_{\bm{a}}=0,\ t^{\pm(\bm{b^{\prime}_{1}}+\bm{b^{\prime}_{2}})}v_{\bm{a}}
=t^{\pm(\bm{b^{\prime}_{1}}+2\bm{b^{\prime}_{2}})}v_{\bm{a}}=0.\]
Since $\{\bm{b^{\prime}_{1}}+\bm{b^{\prime}_{2}},\bm{b^{\prime}_{1}}+2\bm{b^{\prime}_{2}}\}$ is a $\Z$-basis of $\Gamma$, $L$ is generated by $\{E(\pm(\bm{b^{\prime}_{1}}+\bm{b^{\prime}_{2}})),
E(\pm(\bm{b^{\prime}_{1}}+2\bm{b^{\prime}_{2}})), t^{\pm(\bm{b^{\prime}_{1}}+\bm{b^{\prime}_{2}})},
t^{\pm(\bm{b^{\prime}_{1}}+2\bm{b^{\prime}_{2}})}\}$.
Thus $V=\U(\wtL)v_{\bm{a}}$ is a trivial module, which is a contradiction.
\end{proof}

The following proposition give the characterization of the nontrivial GHW Harish-Chandra module.

\begin{prop}\label{311}
If $V$ is a nontrivial GHW Harish-Chandra module with GHW $\bm{\lambda}=(\lambda_{1}, \lambda_{2})$
corresponding to a $\Z$-basis $B=\{\bm{b_{1}}, \bm{b_{2}}\}$ of $\Gamma$ of $\wtL$,
then $V\simeq M\big(\bm{b^{\prime}_{1}}, \bm{b^{\prime}_{2}}, T_{\rho}(\mathcal{H}_{\bm{b^{\prime}_{1}}})(\bm{c},\bm{\lambda})\big)$ for
some $\Z$-basis $\{\bm{b^{\prime}_{1}}, \bm{b^{\prime}_{2}}\}$ of $\Gamma$ and some $\rho\in\mathcal{E}_{\bm{b^{\prime}_{1}}}$.
\end{prop}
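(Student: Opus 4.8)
The plan is to reduce the statement to the single sufficient criterion furnished by Proposition~\ref{211}: it is enough to exhibit a $\Z$-basis $\{\bm{b^{\prime}_{1}},\bm{b^{\prime}_{2}}\}$ of $\Gamma$ and a weight $\bm{\mu}\in\mathcal{P}(V)$ with $\mathcal{P}(V)\cap(\bm{\mu}+\Z\bm{b^{\prime}_{1}}+\N\bm{b^{\prime}_{2}})=\emptyset$. Indeed each of Lemmas~\ref{35}, \ref{37}, \ref{38}, \ref{39} and \ref{310} already yields exactly the desired isomorphism once its hypothesis is verified, and all of them are proved precisely by producing such a half-plane-avoiding configuration and then quoting Proposition~\ref{211}. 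So the entire task is to show that, for a nontrivial GHW Harish-Chandra module, at least one of these hypotheses must hold. I would begin by applying Lemma~\ref{34} to replace $B$ by a basis for which $\mathcal{P}(V)$ is down-closed in the product order, contains the cone $-\Z_{+}\bm{b_{1}}-\Z_{+}\bm{b_{2}}$, and meets $\Z_{+}\bm{b_{1}}+\Z_{+}\bm{b_{2}}$ only in $\bm{0}$.

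Next I would encode the shape of $\mathcal{P}(V)$ through its one-dimensional sections. For each $i$ set $n_{i}=\sup\{j\mid i\bm{b_{1}}+j\bm{b_{2}}\in\mathcal{P}(V)\}$, and symmetrically $m_{j}$; property $(6)$ of Lemma~\ref{34} forces every nonempty column and row to be a downward ray, while down-closedness makes $i\mapsto n_{i}$ and $j\mapsto m_{j}$ nonincreasing. The easy case is when one of these is bounded above: if $n_{i}\le N$ for all $i$, then (by integrality $n_{i}$ is eventually constant for $i\ll 0$) $\mathcal{P}(V)$ lies in the half-plane $\{j\le N\}$ and its top row is attained, giving a configuration for Proposition~\ref{211}; symmetrically if $m_{j}$ is bounded. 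This settles every module whose support fails to spread into both off-diagonal cones, so I may assume $n_{i}\to+\infty$ as $i\to-\infty$ and $m_{j}\to+\infty$ as $j\to-\infty$.

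The heart of the argument is this spreading case, and here I would examine the intersection of $\mathcal{P}(V)$ with an arbitrary line $\bm{a}+\Z\bm{b}$, $\bm{b}=p\bm{b_{1}}-q\bm{b_{2}}$ with $p,q>0$ coprime. If for some choice this intersection is a bounded nonempty set, then it realizes an ``out--in--out'' pattern and Lemma~\ref{39} finishes; equivalently, an interior point $\bm{a}$ is killed by $E(i\bm{b})$ and $E(j\bm{b})$ with $i>0>j$ (the outside weight spaces vanish), which is the hypothesis of Lemma~\ref{310}. If instead the intersection contains two opposite rays, Lemma~\ref{38} finishes. Hence I may assume that every line meets $\mathcal{P}(V)$ in the empty set, a single ray, or the whole line. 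A short wedge analysis shows this is extremely restrictive: a proper convex cone produces a bounded transverse section (triggering Lemma~\ref{39}), the complement of a wedge produces two opposite rays (triggering Lemma~\ref{38}), and an extra isolated point produces an ``out--in--out''; so the trichotomy forces $\mathcal{P}(V)$ to be exactly the lattice points of a genuine affine half-plane whose boundary separates $-\Z_{+}\bm{b_{1}}-\Z_{+}\bm{b_{2}}$ from the open cone $\N\bm{b_{1}}+\N\bm{b_{2}}$.

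The main obstacle is to upgrade this half-plane to one with a \emph{rational} boundary direction, since Proposition~\ref{211} requires an honestly integral $\bm{b^{\prime}_{1}}$ running along the supporting line; a priori the separating line could have irrational slope, in which case no lattice vector is parallel to it and no line section is bounded, so none of the lemmas applies. This is exactly where the Harish-Chandra hypothesis must be used decisively: I expect finiteness of the weight multiplicities, fed through the Heisenberg classification of Theorem~\ref{nzlham} and Proposition~\ref{zlimoha} applied along rational directions tending to the boundary, to force the integer data $n_{i}$ to become eventually an arithmetic progression, pinning the slope to a rational $-q/p$. Once this is established I would take $\bm{b^{\prime}_{1}}=p\bm{b_{1}}-q\bm{b_{2}}$ (primitive, hence completable to a $\Z$-basis $\{\bm{b^{\prime}_{1}},\bm{b^{\prime}_{2}}\}$ with $\bm{b^{\prime}_{2}}$ pointing out of the half-plane), read off that $\mathcal{P}(V)$ omits $\bm{\mu}+\Z\bm{b^{\prime}_{1}}+\N\bm{b^{\prime}_{2}}$ for $\bm{\mu}$ on the top occupied line, and invoke Proposition~\ref{211} to conclude $V\simeq M\big(\bm{b^{\prime}_{1}},\bm{b^{\prime}_{2}},T_{\rho}(\mathcal{H}_{\bm{b^{\prime}_{1}}})(\bm{c},\bm{\lambda})\big)$. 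Ruling out the irrational-slope boundary is the delicate point; everything else is routine case-checking against the lemmas already in place.
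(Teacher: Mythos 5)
Your reduction scheme does match the architecture of the paper's Section 3, but note first what the paper's own proof of Proposition \ref{311} actually is: a one-line citation combining Lemma \ref{310} with the proof of Proposition 3.9 in \cite{LT1}. The substance hidden in that citation is exactly the step you leave open. Your support-side case analysis is correct as far as it goes: after normalizing with Lemma \ref{34}, a line missing $\mathcal{P}(V)$ triggers Lemma \ref{37}, two opposite rays (in particular a full line) trigger Lemma \ref{38}, an ``out--in--out'' pattern triggers Lemma \ref{39}, and a half-plane support with \emph{rational} boundary slope contains full lattice lines parallel to its boundary, so it too is dispatched by Lemma \ref{38}. What survives is precisely the irrational-slope half-plane, where every lattice line meets $\mathcal{P}(V)$ in a single ray and none of the hypotheses of Lemmas \ref{35}--\ref{39} can be verified from the shape of the support alone. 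You explicitly concede this case (``I expect finiteness of the weight multiplicities \dots to force \dots pinning the slope''; ``Ruling out the irrational-slope boundary is the delicate point''). An expectation is not an argument, and this is the only case carrying real content, so the proposal is an incomplete proof rather than a complete alternative one.

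The reason your intended mechanism cannot be patched by more of the same is that it is purely support-theoretic: you try to read rationality off the integer data $n_{i}$, but in the irrational configuration the support is consistent with the failure of every one of the geometric hypotheses, so no amount of section-shape analysis closes the case. The closing step in the paper (via \cite{LT1}) is module-theoretic. One fixes a direction $\bm{b}$ and views $\bigoplus_{x\in\Z}V_{\bm{a}+x\bm{b}}$ as a $\Z$-graded module over the Heisenberg-type subalgebra in that direction; the classification results already in the paper (Theorem \ref{nzlham} for nonzero level, Proposition \ref{zlimoha} for level zero) together with finite-dimensionality of weight spaces produce a weight vector $v_{\bm{a}}$ annihilated by $E(i\bm{b})$ \emph{and} by $E(j\bm{b})$ with $i>0>j$ --- annihilation occurring inside the support, hence invisible to your analysis, since a ray-shaped section only yields vanishing on one side. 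This is exactly the hypothesis of Lemma \ref{310}, whose proof then converts the annihilating pair, using the ray structure of the sections, into $E(q\bm{b^{\prime}_{2}}\pm\bm{b^{\prime}_{1}})v_{\bm{a}}=t^{q\bm{b^{\prime}_{2}}\pm\bm{b^{\prime}_{1}}}v_{\bm{a}}=0$ for all $q$, forcing either one of the earlier lemmas to apply or $V$ to be trivial. Your outline never manufactures such a vector, so the decisive implication --- and with it the proof --- is missing. A correct write-up should either reproduce that construction or, as the paper does, invoke Lemma \ref{310} together with the argument of Proposition 3.9 of \cite{LT1}.
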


\begin{proof}
From the Lemma \ref{310} and the proof of the Proposition $3.9$ in \cite{LT1}, we can obtain our results.
\end{proof}

Together with the Theorem \ref{nzlHCmiGHWm} and the Proposition \ref{311}, we have:

\begin{thm}
If $V$ is a nonzero level Harish-Chandra $\wtL$-module,
then \[V\simeq M\big(\bm{b^{\prime}_{1}}, \bm{b^{\prime}_{2}}, T_{\rho}(\mathcal{H}_{\bm{b^{\prime}_{1}}})(\bm{c},\bm{\lambda})\big)\]
for some $\Z$-basis $\{\bm{b^{\prime}_{1}}, \bm{b^{\prime}_{2}}\}$ of $\Gamma$ and some
$\rho\in\mathcal{E}_{\bm{b^{\prime}_{1}}}$, $\bm{\lambda}\in\C^2$.
\end{thm}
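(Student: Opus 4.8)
The final theorem combines two previously established results, so the plan is essentially to assemble them. The statement asserts that any nonzero level Harish-Chandra module $V$ of $\wtL$ is isomorphic to an induced module $M\big(\bm{b^{\prime}_{1}}, \bm{b^{\prime}_{2}}, T_{\rho}(\mathcal{H}_{\bm{b^{\prime}_{1}}})(\bm{c},\bm{\lambda})\big)$ for a suitable $\Z$-basis and $\rho\in\mathcal{E}_{\bm{b^{\prime}_{1}}}$.

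The first step is to invoke Theorem \ref{nzlHCmiGHWm}, which tells us that a nonzero level Harish-Chandra module of $\wtL$ is automatically a GHW module. Concretely, this furnishes some $\Z$-basis $B=\{\bm{b_{1}}, \bm{b_{2}}\}$ of $\Gamma$, some weight $\bm{\lambda}=(\lambda_{1},\lambda_{2})\in\C^2$, and a GHW vector realizing $V$ as a generalized highest weight module with GHW $\bm{\lambda}$ corresponding to $B$. I would note that $V$ is nontrivial: since the level is nonzero, at least one of the central elements $K_i$ acts as a nonzero scalar, so $V$ cannot be the trivial one-dimensional module.

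The second step is to feed this GHW structure into Proposition \ref{311}. That proposition takes exactly a nontrivial GHW Harish-Chandra module with GHW $\bm{\lambda}$ corresponding to a $\Z$-basis $B$ and concludes that $V\simeq M\big(\bm{b^{\prime}_{1}}, \bm{b^{\prime}_{2}}, T_{\rho}(\mathcal{H}_{\bm{b^{\prime}_{1}}})(\bm{c},\bm{\lambda})\big)$ for some (possibly different) $\Z$-basis $\{\bm{b^{\prime}_{1}}, \bm{b^{\prime}_{2}}\}$ and some $\rho\in\mathcal{E}_{\bm{b^{\prime}_{1}}}$. Chaining these two results gives precisely the desired isomorphism, with the weight $\bm{\lambda}\in\C^2$ carried along.

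Since both ingredients are already proved in the excerpt, there is essentially no remaining obstacle; the only point requiring a line of care is verifying the hypotheses of Proposition \ref{311} are met — namely that $V$ is \emph{nontrivial} and genuinely GHW — which follows from the nonzero level assumption and Theorem \ref{nzlHCmiGHWm} respectively. Thus the proof is a short concatenation: apply Theorem \ref{nzlHCmiGHWm} to upgrade the nonzero level hypothesis to the GHW property, then apply Proposition \ref{311} to identify $V$ with the induced module.
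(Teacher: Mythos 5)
Your proposal is correct and matches the paper exactly: the theorem is stated there as an immediate consequence of Theorem \ref{nzlHCmiGHWm} (nonzero level implies GHW) combined with Proposition \ref{311} (nontrivial GHW Harish-Chandra modules are of the stated induced form). Your additional remark that nonzero level forces $V$ to be nontrivial is a sensible verification of the hypothesis of Proposition \ref{311}, which the paper leaves implicit.
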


\section{Classification of GHW Harish-Chandra $\wtL$-modules}
In this section, we give the classification of GHW Harish-Chandra modules of $\wtL$ by
using the highest weight modules of $L$. From the Proposition \ref{311},
we only need to find in which case that the irreducible GHW $\wtL$-module
$M\big(\bm{b_{1}}, \bm{b_{2}}, T_{\rho}(\mathcal{H}_{\bm{b_{1}}})(\bm{c},\bm{\lambda})\big)$ is a Harish-Chandra module.

First we give a triangular decomposition of $L$ and construct a
class of $\Z$-graded irreducible highest weight modules of $L$. Recall that
\[\wtL_{i}=\<E(m\bm{b_{1}}+i\bm{b_{2}}),\ t^{m\bm{b_{1}}+i\bm{b_{2}}}\mid m\in\Z\>,\  i\in\Z\setminus\{0\},\]
and \[\wtL_+=\bigoplus_{i>0}\wtL_{i},\ \wtL_-=\bigoplus_{i<0}\wtL_{i}.\]
Then $L=\wtL_+\oplus \mathcal{H}_{\bm{b_{1}}}\oplus \wtL_-$.

\begin{rmk}\label{41}
In this section, we call a $L$-module $V$ the highest weight module (corresponding to the $\Z$-basis $\{\bm{b_1},\bm{b_2}\}$)
if there exists a nonzero $v\in V$ such that $V=\U(L)v$ and $\wtL_+.v=0$.
\end{rmk}

 For any linear function
$\rho: \mathcal{H}_{\bm{b_{1}}}\rightarrow\C$ with $\rho(f(\bm{b_1}))=\rho(h(\bm{b_1}))=0$, we define
a one dimensional $(\mathcal{H}_{\bm{b_{1}}}\oplus \wtL_+)$-module $\C v_0$ as follows:
\begin{align}\wtL_+.v_0=0,\ x.v_0=\rho(x)v_0,\ x\in \mathcal{H}_{\bm{b_{1}}}.\end{align}
Then we have an induced $L$-module \begin{align}\overline{V}(\rho)=\text{Ind}_{\mathcal{H}_{\bm{b_{1}}}\oplus \wtL_+}^{L}\C v_0
=\U(L)\otimes_{\U(\mathcal{H}_{\bm{b_{1}}}\oplus \wtL_+)}\C v_0.\end{align}
We see that $\overline{V}(\rho)$ is a $\Z$-graded module. It is clear that $\overline{V}(\rho)$ has
a unique maximal $\Z$-graded submodule $J(\rho)$. Then we obtain a $\Z$-graded irreducible highest
weight $L$-module \[V(\rho)=\overline{V}(\rho)/J(\rho)=\oplus_{i\in\Z}V(\rho)_i,\]
where \[\begin{aligned}V(\rho)_i=\text{Span}_{\C}\{E(i_1\bm{b_1}+j_1\bm{b_2})E(i_2\bm{b_1}+j_2\bm{b_2})\cdots
E(i_m\bm{b_1}+j_m\bm{b_2})t^{s_1\bm{b_1}+k_1\bm{b_2}}\\
\cdots t^{s_n\bm{b_1}+k_n\bm{b_2}}v_0\mid m,n\in\Z_+,\ \sum_{p=1}^{m}j_p+\sum_{p=1}^{n}k_p=i\},\ \text{for}\ i\in\Z.
\end{aligned}\]
We call $V(\rho)_i$ for $i\in\Z$ the weight space of the $L$-module $V(\rho)$. If dim $V(\rho)_i<\infty$, we say the weight space
$V(\rho)_i$ is finite dimensional.

For the later use, we need a conception of
exp-polynomial function. Recall from \cite{BZ} that a function $f:\Z\rightarrow \C$ is said to be
exp-polynomial if it can be written as a finite sum
\[f(n)=\sum c_{m,a}n^{m}a^{n},\]
for some $c_{m,a}\in \C$, $m\in\Z_+$ and $0\neq a\in\C$.

The following lemma is due to \cite{W}.

\begin{lem}\label{43}
A function $f:\Z\rightarrow\C$ is an {\em exp-polynomial function} if and only
if there exist $a_0,...,a_n\in\C$ with $a_0a_n\neq0$, such that
\[\sum_{i=0}^na_if(m+i)=0,\]for all $m\in\Z$.
\end{lem}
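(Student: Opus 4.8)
The plan is to phrase the recurrence in terms of the shift operator $E$ on functions $\Z\to\C$, defined by $(Ef)(n)=f(n+1)$, and to exploit the correspondence between the relation $\sum_{i=0}^n a_if(m+i)=0$ and the characteristic polynomial $P(x)=\sum_{i=0}^n a_ix^i$: the relation holds for all $m\in\Z$ precisely when $P(E)f=0$. The condition $a_0a_n\neq0$ then translates into the two statements that $P$ has degree exactly $n$ and that $P(0)\neq0$, i.e. that every root of $P$ is nonzero. Both implications will be read off from this dictionary.

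For the forward implication I would first record the elementary computation $(E-\alpha)(n^m\alpha^n)=\alpha\bigl((n+1)^m-n^m\bigr)\alpha^n$, which is $\alpha^n$ times a polynomial in $n$ of degree $m-1$; hence $(E-\alpha)^{m+1}$ annihilates $n^m\alpha^n$ for every $\alpha\neq0$. Given an exp-polynomial $f(n)=\sum c_{m,\alpha}n^m\alpha^n$, let $\alpha_1,\dots,\alpha_k$ be the distinct (nonzero) bases occurring and let $m_j$ be the largest power of $n$ paired with $\alpha_j$; then $P(E):=\prod_{j=1}^k(E-\alpha_j)^{m_j+1}$ kills $f$. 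Writing $P(x)=\sum_{i=0}^N b_ix^i$, its leading coefficient is $b_N=1$ and its constant term is $P(0)=\prod_j(-\alpha_j)^{m_j+1}\neq0$, so $b_0b_N\neq0$ and the recurrence $\sum_{i=0}^N b_if(m+i)=0$ has the required form.

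For the converse, suppose $P(E)f=0$ with $a_0a_n\neq0$, and factor $P(x)=a_n\prod_{j=1}^k(x-\alpha_j)^{d_j}$ with the $\alpha_j$ distinct and nonzero and $\sum_j d_j=n$. The computation above shows that each of the $n$ functions $n^m\alpha_j^n$, $0\le m<d_j$, lies in the solution space $S=\{g:\Z\to\C\mid P(E)g=0\}$. I would then establish the two facts that close the argument: first $\dim_\C S=n$, and second these $n$ functions are linearly independent. The dimension count is the routine half: since $a_n\neq0$ the relation solves $g(m+n)$ in terms of $g(m),\dots,g(m+n-1)$, so any $g\in S$ is determined on $[0,\infty)$ by $g(0),\dots,g(n-1)$; since $a_0\neq0$ it likewise solves $g(m)$ in terms of $g(m+1),\dots,g(m+n)$, determining $g$ on $(-\infty,0)$ from the same data, so $g\mapsto(g(0),\dots,g(n-1))$ is a linear isomorphism $S\xrightarrow{\sim}\C^n$.

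The main obstacle is the linear independence of $\{n^m\alpha_j^n\}$, which I expect to handle by induction on $k$. If $\sum_j p_j(n)\alpha_j^n=0$ with $\deg p_j<d_j$, applying $(E-\alpha_k)^{d_k}$ annihilates the $j=k$ summand while, because $\alpha_j\neq\alpha_k$, it sends each $p_j(n)\alpha_j^n$ with $j\neq k$ to $q_j(n)\alpha_j^n$ with $\deg q_j=\deg p_j$ (the leading coefficient merely scales by $(\alpha_j-\alpha_k)^{d_k}\neq0$). The inductive hypothesis forces $q_j=0$, hence $p_j=0$, for $j\neq k$, and then $p_k(n)\alpha_k^n=0$ gives $p_k=0$ as well. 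With independence and the dimension count in hand, the $n$ functions $n^m\alpha_j^n$ form a basis of $S$, so $f$ is a $\C$-linear combination of them; that is, $f$ is exp-polynomial, completing the equivalence.
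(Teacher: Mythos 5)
Your proof is correct. For comparison: the paper does not actually prove this lemma --- it is quoted without proof from \cite{W} --- so your self-contained argument supplies what the paper leaves to a reference. The route you take is the standard theory of two-sided constant-coefficient linear recurrences via the shift operator, and all three pillars are sound: (i) the identity $(E-\alpha)\bigl(p(n)\alpha^n\bigr)=\alpha\bigl(p(n+1)-p(n)\bigr)\alpha^n$, which both yields the annihilator $\prod_j(E-\alpha_j)^{m_j+1}$ in the forward direction (its constant term $\prod_j(-\alpha_j)^{m_j+1}$ is nonzero precisely because the definition of exp-polynomial requires nonzero bases, and this is where the condition $a_0\neq0$ comes from) and shows $n^m\alpha_j^n$ lies in the solution space $S$ for $m<d_j$ in the converse (the roots $\alpha_j$ are nonzero exactly because $P(0)=a_0\neq0$); (ii) the count $\dim_{\C} S=n$, where your use of $a_n\neq0$ to propagate initial data forward and $a_0\neq0$ to propagate it backward is exactly right --- on $\Z$, as opposed to $\Z_+$, both extreme coefficients are needed for the evaluation map $g\mapsto\bigl(g(0),\dots,g(n-1)\bigr)$ to be an isomorphism; (iii) the independence induction, where you correctly supply the key point that $(E-\alpha_k)^{d_k}$ scales the leading coefficient of each $p_j$, $j\neq k$, by $(\alpha_j-\alpha_k)^{d_k}\neq0$, so that $q_j=0$ forces $p_j=0$. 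Only two small points should be made explicit in a final write-up: the base case $k=1$ of the induction ($p(n)\alpha^n=0$ for all $n\in\Z$ with $\alpha\neq0$ makes $p$ a polynomial with infinitely many roots, hence $p=0$), and, in the forward direction, that the exp-polynomial representation of $f$ need not be unique a priori --- which is harmless, since $P(E)$ annihilates any chosen finite representation term by term.
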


\begin{rmk}
In general, for fixed $a_0,...,a_n\in\C$ with $a_0a_n\neq0$, the exp-polynomial function $f$
satisfying $\sum_{i=0}^na_if(m+i)=0$, $\forall m\in\Z$, is not unique.
\end{rmk}

Then we have the following result.

\begin{prop}\label{fws}
Suppose the linear function $\rho: \mathcal{H}_{\bm{b_{1}}}\rightarrow\C$ such that $\rho(f(\bm{b_1}))=\rho(h(\bm{b_1}))=0$.
Then the $\Z$-graded $L$-module $V(\rho)$ has finite dimensional weight spaces if and
only if there exist two exp-polynomials $g_j:\Z\rightarrow\C$ satisfying
$\sum_{i=0}^na_ig_j(k+i)=0$ for $j=1,2$, $k\in\Z$, $a_i\in\C$, $a_0a_n\neq0$ and
\[g_1(0)=\text{det}{\bm{b_1} \choose \bm{b_2}}\rho(f(\bm{b_2})),\
g_2(0)=\text{det}{\bm{b_1} \choose \bm{b_2}}\rho(h(\bm{b_2})),\]
\[g_1(m)=\rho(mE(m\bm{b_1})),\ g_2(m)=\rho(mt^{m\bm{b_1}}),\ m\in\Z\setminus\{0\}.\]
\end{prop}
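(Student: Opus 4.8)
The plan is to reduce the whole statement to the single weight space $V(\rho)_{-1}$ for the necessity direction and to an inductive spanning argument for sufficiency, the bridge being the module structure over the principal ideal domain $\C[z,z^{-1}]$. First I would analyze $V(\rho)_{-1}$. Since $\overline{V}(\rho)=\U(\wtL_-)v_0$, its degree $-1$ component is freely spanned by $\{E(m\bm{b_1}-\bm{b_2})v_0,\ t^{m\bm{b_1}-\bm{b_2}}v_0\mid m\in\Z\}$, and $V(\rho)_{-1}$ is its quotient by the degree $-1$ part of $J(\rho)$. A degree $-1$ vector lies in $J(\rho)$ exactly when it is annihilated by every raising operator $E(n\bm{b_1}+\bm{b_2})$ and $t^{n\bm{b_1}+\bm{b_2}}$, since these are the only operators returning such a vector to the one-dimensional top $\C v_0$. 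Writing $D=\text{det}{\bm{b_1} \choose \bm{b_2}}$ and using that $D=\pm1$ because $\{\bm{b_1},\bm{b_2}\}$ is a $\Z$-basis, a direct bracket computation gives
\[E(n\bm{b_1}+\bm{b_2})E(m\bm{b_1}-\bm{b_2})v_0=Dg_1(m+n)v_0,\quad E(n\bm{b_1}+\bm{b_2})t^{m\bm{b_1}-\bm{b_2}}v_0=Dg_2(m+n)v_0,\]
\[t^{n\bm{b_1}+\bm{b_2}}E(m\bm{b_1}-\bm{b_2})v_0=Dg_2(m+n)v_0,\quad t^{n\bm{b_1}+\bm{b_2}}t^{m\bm{b_1}-\bm{b_2}}v_0=0,\]
where the boundary cases $m+n=0$ reproduce $g_1(0)=D\rho(f(\bm{b_2}))$ and $g_2(0)=D\rho(h(\bm{b_2}))$ precisely because $D^2=1$.

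Consequently $w=\sum_m(a_mE(m\bm{b_1}-\bm{b_2})+b_mt^{m\bm{b_1}-\bm{b_2}})v_0$ lies in $J(\rho)$ iff $\sum_m a_mg_2(m+n)=0$ and $\sum_m(a_mg_1(m+n)+b_mg_2(m+n))=0$ for all $n$. I would then identify the finitely supported sequences $(a_m)$ with Laurent polynomials in $\C[z,z^{-1}]$ and regard $g_1,g_2$ as elements of the shift-module $S$ of all $\Z$-indexed sequences. The two conditions say precisely that $(a_m)$ annihilates $g_2$ and that $(a_m)g_1$ agrees with $-(b_m)g_2$. A short argument over the PID $\C[z,z^{-1}]$ then shows that $\dim V(\rho)_{-1}<\infty$ if and only if the submodule $M=\C[z,z^{-1}]g_1+\C[z,z^{-1}]g_2\subseteq S$ is finite-dimensional over $\C$, i.e.\ torsion, i.e.\ both $g_1$ and $g_2$ are exp-polynomial; and in that case the generator $P(z)=\sum_{i=0}^n a_iz^i$ (normalized so that $a_0a_n\neq0$) of the nonzero annihilator ideal $\mathrm{Ann}(M)=\mathrm{Ann}(g_1)\cap\mathrm{Ann}(g_2)$ furnishes the common recurrence $\sum_{i=0}^n a_ig_j(k+i)=0$ of the statement, by the Lemma \ref{43}. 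Since finite-dimensionality of all weight spaces forces $\dim V(\rho)_{-1}<\infty$, this already proves the ``only if'' direction.

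For the ``if'' direction I would assume the common recurrence; by the Lemma \ref{43} both $g_1,g_2$ are exp-polynomial, so $M$ is finite-dimensional. It remains to propagate finite-dimensionality from level $-1$ to every level $-i$. I would do this by induction on $i$ using the PBW spanning of $V(\rho)_{-i}$ by monomials in $\wtL_-$ applied to $v_0$, showing that the recurrence produces, at each level, cofinitely many linear relations among the spanning monomials, exactly as in the exp-polynomial Lie algebra method of \cite{BZ} (c.f.\ \cite{LT1}). The main obstacle is precisely this last step: extracting from a single recurrence on $g_1,g_2$ enough relations to bound all the higher weight spaces uniformly, since applying raising operators to a level $-i$ monomial yields sums over the full two-dimensional lattice of brackets rather than the clean one-variable convolutions available at level $-1$. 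Handling this requires the exp-polynomial bookkeeping of \cite{BZ} together with the finiteness of $M$ already established above.
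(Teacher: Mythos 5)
Your ``only if'' half is correct and is essentially the paper's own argument in a more structural dress: the paper also works entirely in $V(\rho)_{-1}$, extracting from $\dim V(\rho)_{-1}<\infty$ a single relation $\sum_{i=0}^{n}a_iE((k+i)\bm{b_1}-\bm{b_2})v_0=0$ with $a_0a_n\neq0$ and then applying the two families of raising operators $E(s\bm{b_1}+\bm{b_2})$ and $t^{s\bm{b_1}+\bm{b_2}}$ to that one relation, which produces both recurrences with the \emph{same} coefficients $a_i$, concluding by the Lemma \ref{43}. Your bracket table is correct (degree reasons do show that only the degree-one raising operators can bring a degree $-1$ vector back to $\C v_0$, since $\overline{V}(\rho)$ vanishes in positive degrees and has degree-zero part $\C v_0$), and your passage through the annihilator ideal of $M=\C[z,z^{-1}]g_1+\C[z,z^{-1}]g_2$ in the PID $\C[z,z^{-1}]$ is sound: the common recurrence comes from a generator of $\mathrm{Ann}(g_1)\cap\mathrm{Ann}(g_2)$, normalized so that $a_0a_n\neq0$. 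Indeed your equivalence $\dim V(\rho)_{-1}<\infty$ if and only if $\dim_{\C}M<\infty$ is sharper than what the paper records for this direction.

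The gap is in the ``if'' direction, and it sits exactly where you flagged it. You propose to redo, level by level, an induction producing ``cofinitely many linear relations,'' deferring the bookkeeping to \cite{BZ}; but you never verify that the machinery of \cite{BZ} applies, and that verification is the entire content of the paper's proof of this half. The paper runs no induction over weight spaces at all: it checks (its Claims 1 and 2) that $L$ is a $\Z$-graded exp-polynomial Lie algebra which is moreover $\Z$-extragraded, the two gradations being by the $\bm{b_1}$-degree $j$ and the $\bm{b_2}$-degree $i$, with respect to the distinguished spanning set consisting of $E(i\bm{b_2}+j\bm{b_1})$, $t^{i\bm{b_2}+j\bm{b_1}}$ for $i\neq0$ together with the \emph{normalized} elements $jE(j\bm{b_1})$, $jt^{j\bm{b_1}}$ and $K_1,\dots,K_4$ in extradegree $0$. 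That normalization by $j$ is the key device your sketch is missing: it makes the structure constants exp-polynomial in the indices and makes the extradegree-zero action on $\C v_0$ be given literally by your $g_1,g_2$, with the boundary values $g_1(0)=\text{det}{\bm{b_1} \choose \bm{b_2}}\rho(f(\bm{b_2}))$ and $g_2(0)=\text{det}{\bm{b_1} \choose \bm{b_2}}\rho(h(\bm{b_2}))$ absorbed through the central terms of the brackets. Once this is checked, $\C v_0$ is a one-dimensional exp-polynomial module annihilated by $\wtL_+$, and Theorem $1.7$ of \cite{BZ} gives $\dim V(\rho)_i<\infty$ for all $i$ in one stroke; the ``two-dimensional lattice'' difficulty you name as the main obstacle is precisely what the extragrading hypothesis is designed to neutralize. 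As written, your sufficiency argument stops at its decisive step: either carry out the structural verification above so that \cite{BZ} can be cited as a black box, or accept that the induction you sketch amounts to reproving Theorem $1.7$ of \cite{BZ} from scratch.
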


\begin{proof}
First, we define two linear maps $\phi_1,\phi_2:\C[t_1^{\pm1},t_2^{\pm1}]\rightarrow L$ by
\[\phi_i(t_1^{m_1}t_2^{m_2})=\begin{cases}E(m_1\bm{b_{1}}+m_2\bm{b_{2}}),\ &i=1,\cr
t^{m_1\bm{b_{1}}+m_2\bm{b_{2}}},\ &i=2.\end{cases}\]
If $V(\rho)$ has finite dimensional weight spaces, since dim $V(\rho)_{-1}<\infty$ and $\phi_1(t_1^it_2^{-1})v_0\in V(\rho)_{-1}$
for all $i\in\Z$, there exists $k\in\Z$ and nonzero polynomials
$P(t_1)=\sum_{i=0}^{n}a_{i}t_1^i\in\C[t_1]$ with $a_{0}a_{n}\neq0$ such that
\[\phi_1(t_2^{-1}t_1^{k}P(t_1))v_0=0.\]
Applying $\phi_i(t_1^st_2)$ for any $s\in\Z$, $i=1,2$ to the above equation respectively, we get that
\begin{align}\big(\sum_{i=0}^{n}a_i(k+s+i)E((k+s+i)\bm{b_1})+\text{det}{\bm{b_1} \choose \bm{b_2}}a_{-k-s}f(\bm{b_2})\big).v_0=0,\end{align}
and
\begin{align}\big(\sum_{i=0}^{n}a_i(k+s+i)t^{(k+s+i)\bm{b_1}}+\text{det}{\bm{b_1} \choose \bm{b_2}}a_{-k-s}h(\bm{b_2})\big).v_0=0,\end{align}
where $a_{-k-s}=0$ if $-k-s\notin\{0,1,...,n\}$. Set $g_1:\Z\rightarrow\C$ such that
$g_1(0)=\text{det}{\bm{b_1} \choose \bm{b_2}}\rho(f(\bm{b_2}))$ and $g_1(m)=\rho(mE(m\bm{b_1}))$ for $m\in\Z\setminus\{0\}$.
Then $(4.3)$ becomes \[\sum_{i=0}^na_ig_1(m+i)=0,\ \forall m\in\Z.\]
Set $g_2:\Z\rightarrow\C$ such that
$g_2(0)=\text{det}{\bm{b_1} \choose \bm{b_2}}\rho(h(\bm{b_2}))$ and $g_2(m)=\rho(mt^{m\bm{b_1}})$ for $m\in\Z\setminus\{0\}$.
Then $(4.4)$ becomes \[\sum_{i=0}^na_ig_2(m+i)=0,\ \forall m\in\Z.\]
From the Lemma \ref{43}, we have that $g_1$, $g_2$ are exp-polynomial functions.

\vspace{3mm}

Conversely, we use the Theorem $1.7$ from \cite{BZ} to prove that
the $\Z$-graded $L$-module $V(\rho)$ has finite dimensional weight spaces, i.e., dim $V(\rho)_i<\infty$ for all $i\in\Z$.
Since $\C v_0$ is a one dimensional $\mathcal{H}_{\bm{b_{1}}}$-module with exp-polynomial action, i.e.,
$\mathcal{H}_{\bm{b_{1}}}$ acts on $\C v_0$ through two exp-polynomials $g_1,g_2$, and $\wtL_+.v_0=0$,
from the Theorem 1.7 in \cite{BZ}, we just need to prove that $L$ is $\Z$-extragraded.
Set the index sets $X_i=\{(1,i),(2,i)\}$ for $i\in\Z\setminus\{0\}$
and $X_0=\{(i,0)\mid i=1,2,...,6\}$. For $i\in\Z\setminus\{0\},$ let \[\mathcal{L}_{\bm{k}}^{i}(j)=\begin{cases}
E(i\mathbf{b_{2}}+j\mathbf{b_{1}}),\ &\bm{k}=(1,i),\cr t^{i\mathbf{b_{2}}+j\mathbf{b_{1}}},\ &\bm{k}=(2,i),\ j\in\Z,
\end{cases}\]
and \[\mathcal{L}_{\bm{k}}^{0}(j)=\begin{cases}
jE(j\mathbf{b_{1}}),\ &\bm{k}=(1,0),\ j\neq0,\cr jt^{j\mathbf{b_{1}}},\ &\bm{k}=(2,0),\ j\neq0,
\cr K_i,\ &\bm{k}=(i+2,0),\ i=1,2,3,4,\ j=0.\end{cases}\]
$\textbf{Claim\ 1}.$ $L$ is a $\Z$-graded exp-polynomial Lie algebra (see the Definition $1.2$ in \cite{BZ}).

In fact, let $L=\oplus_{j\in\Z}L(j)$, where $L(j)=\<\mathcal{L}_{\bm{k}}^{i}(j)\mid i\in\Z,\ \bm{k}\in X_i\>$.
$[L(j_1),L(j_2)]\subseteq L(j_1+j_2)$ for $j_1,j_2\in\Z$. Thus $L$ is $\Z$-graded. And it is straightforward to check that
$L$ is an exp-polynomial Lie algebra with the distinguished spanning set $\{\mathcal{L}_{\bm{k}}^{i}(j)\mid\bm{k}\in X_i,\ i,j\in\Z\}$.\\
$\textbf{Claim\ 2}.$ The $\Z$-graded exp-polynomial Lie algebra $L$ is $\Z$-extragraded (see the Definition $1.4$ in \cite{BZ}).

In fact, let $L=\oplus_{i\in\Z}L^{(i)}$, where $L^{(i)}=\<\mathcal{L}_{\bm{k}}^{i}(j)\mid j\in\Z,\ \bm{k}\in X_i\>$.
$[L^{(i_1)},L^{(i_2)}]\subseteq L^{(i_1+i_2)}$ for $i_1,i_2\in\Z$, i.e., $L$ has another $\Z$-gradation.
\end{proof}

For linear function $\rho: \mathcal{H}_{\bm{b_{1}}}\rightarrow\C$ with $\rho(f(\bm{b_1}))=\rho(h(\bm{b_1}))=0$,
we say that $\rho$ is an {\em exp-polynomial function over $\mathcal{H}_{\bm{b_{1}}}$} if
there exist $a_0,...,a_n\in\C$, $a_0a_n\neq0$ and two exp-polynomials $g_0,g_1$ given by
\[g_1(0)=\text{det}{\bm{b_1} \choose \bm{b_2}}\rho(f(\bm{b_2})),\ g_2(0)=\text{det}{\bm{b_1} \choose \bm{b_2}}\rho(h(\bm{b_2})),\]
and
\[g_1(m)=\rho(mE(m\bm{b_1})),\ g_2(m)=\rho(mt^{m\bm{b_1}})\]for all $m\in\Z\setminus\{0\}$
such that $\sum_{i=0}^na_ig_j(k+i)=0$ for $j=1,2$, $k\in\Z$.

Let
\[
{\bm{b_{1}} \choose \bm{b_{2}}}^{-1}=\left(\begin{array}{cc}
p_1 & q_1\\
p_2 & q_2
\end{array}\right)\in\text{GL}_{2\times2}(\Z).
\]Set $\widetilde{d_1}=p_1d_1+p_2d_2$, $\widetilde{d_2}=q_1d_1+q_2d_2$. Then we have
\[[\widetilde{d_i},E(m_1\bm{b_1}+m_2\bm{b_2})]=m_iE(m\bm{b_1}+n\bm{b_2}),\
[\widetilde{d_i},t^{m_1\bm{b_1}+m_2\bm{b_2}}]=m_it^{m_1\bm{b_1}+m_2\bm{b_2}}\]
for $i=1,2$, $m_1,m_2\in\Z$.

Now we construct a class of $\Z^2$-graded irreducible GHW $\wtL$-modules by using the above $\Z$-graded
highest weight $L$-module $V(\rho)$. For any linear function
$\rho: \mathcal{H}_{\bm{b_{1}}}\rightarrow\C$ with $\rho(f(\bm{b_1}))=\rho(h(\bm{b_1}))$=0, we set
$\widehat{V}(\rho)=V(\rho)\otimes\C[t^{\pm1}]$, and define the actions of $\wtL$ on
$\widehat{V}(\rho)$ as follows:
\[E(m\bm{b_1}+n\bm{b_2}).(v\otimes t^k)=(E(m\bm{b_1}+n\bm{b_2}).v)\otimes t^{m+k},\
t^{m\bm{b_1}+n\bm{b_2}}.(v\otimes t^k)=(t^{m\bm{b_1}+n\bm{b_2}}.v)\otimes t^{m+k}\]
\[\widetilde{d_1}.(v\otimes t^k)=k(v\otimes t^k),\ \widetilde{d_2}.(v\otimes t^k)=j(v\otimes t^k),\]
\[K_i.(v\otimes t^k)=(K_i.v)\otimes t^k\]
for $(m,n)\in\Z^2\setminus\{\bm{0}\}$, $v\in V(\rho)_j$, $j\in\Z$, $i=1,2,3,4$.
It is clear that $\widehat{V}(\rho)$ is a $\Z^2$-graded $\wtL$-module. And
\[\widehat{V}(\rho)=\bigoplus_{m,n\in\Z}\widehat{V}(\rho)_{(m,n)},\]
where $\widehat{V}(\rho)_{(m,n)}=V(\rho)_m\otimes t^n$. We call $\widehat{V}(\rho)_{(m,n)}$, $m,n\in\Z$
weight spaces of the module $\widehat{V}(\rho)$
with respect to $\widetilde{d_1}$, $\widetilde{d_2}$.

Let $W(i)$ be the $\wtL$-submodule of $\widehat{V}(\rho)$ generated by $v_0\otimes t^i$, $i\in\Z$, where $v_0$ is defined in $(4.1)$.

\begin{lem}\label{46}
Let $\rho\in\mathcal{E}_{\bm{b_{1}}}$.\\
(1). If $T_{\rho}(\mathcal{H}_{\bm{b_{1}}})=T_0$, $\widehat{V}(\rho)=\bigoplus_{i\in\Z}W(i)$.\\
(2). If $T_{\rho}(\mathcal{H}_{\bm{b_{1}}})=T_r$ for some $r\in\N$,
$\widehat{V}(\rho)=\bigoplus_{i=0}^{r-1}W(i)$.\\
Where $W(i)$ is a $\Z^2$-graded irreducible $\wtL$-submodule of $\widehat{V}(\rho)$.
\end{lem}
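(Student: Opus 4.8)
The plan is to put a conserved grading on $\widehat{V}(\rho)$ that separates the submodules $W(i)$, thereby reducing the entire statement to the irreducibility of a single homogeneous component. The key preliminary input is the consequence of $\rho\in\mathcal{E}_{\bm{b_1}}$ together with the Proposition \ref{zlimoha}: since $T_{\rho}(\mathcal{H}_{\bm{b_1}})=T_r$, the function $\rho$ must be supported on $r\Z$, i.e. $\rho(E(k\bm{b_1}))=\rho(t^{k\bm{b_1}})=0$ whenever $r\nmid k$ (with the convention that $r=0$ forces $k=0$). Hence on the highest weight vector $v_0$ the degree-zero part $\mathcal{H}_{\bm{b_1}}$ only produces scalar multiples of $v_0$ and only shifts the $\bm{b_1}$-degree by multiples of $r$, so the $\bm{b_1}$-degree descends to a genuine $\Z/r\Z$-grading of $V(\rho)$, giving $V(\rho)=\bigoplus_{a\le 0,\ \overline{c}\in\Z/r\Z}V(\rho)_{(a,\overline{c})}$ with $V(\rho)_{(0,\overline{0})}=\C v_0$. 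On $\widehat{V}(\rho)=V(\rho)\otimes\C[t^{\pm1}]$ each generator $E(\bm{m}),t^{\bm{m}}$ shifts the $t$-exponent and the $\bm{b_1}$-degree by the same amount (the $\bm{b_1}$-component of $\bm{m}$), while $K_i,\widetilde{d_1},\widetilde{d_2}$ change neither. Therefore the residue $\chi(v\otimes t^n):=\overline{n-c}\in\Z/r\Z$ (for $v\in V(\rho)_{(a,\overline{c})}$) is preserved by all of $\wtL$, and $\widehat{V}(\rho)=\bigoplus_{\overline{j}}M_{\overline{j}}$ splits into $\Z^2$-graded submodules, with $v_0\otimes t^i\in M_{\overline{i}}$ and $W(i)\subseteq M_{\overline{i}}$.

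The directness of $\bigoplus_i W(i)$ is then automatic: distinct values of $i$ in the prescribed index set ($\Z$ when $r=0$, and $\{0,\dots,r-1\}$ when $r\in\N$) land in distinct $\chi$-components. For spanning I would note that $\widehat{V}(\rho)$ is spanned by vectors $(X.v_0)\otimes t^b$ with $X\in\U(\wtL)$ homogeneous of $\bm{b_1}$-degree $\alpha$, and that $(X.v_0)\otimes t^b=X.(v_0\otimes t^{b-\alpha})$; since the $\mathcal{H}_{\bm{b_1}}$-action puts $v_0\otimes t^{b-\alpha}$ into $W(i_0)$ for the representative $i_0\equiv b-\alpha\pmod r$, every such vector lies in $\sum_i W(i)$. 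Hence $\widehat{V}(\rho)=\bigoplus_i W(i)$, and comparing this with $\widehat{V}(\rho)=\bigoplus_{\overline{j}}M_{\overline{j}}$ forces $W(i)=M_{\overline{i}}$.

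It then remains to prove that each $M_{\overline{i}}=W(i)$ is $\Z^2$-graded irreducible, which is the core of the argument. Given a nonzero graded submodule $N\subseteq M_{\overline{i}}$, I would take a nonzero homogeneous $w=v\otimes t^b\in N$ with $v\in V(\rho)_a$, $a\le 0$, and invoke the (graded) irreducibility of the $L$-module $V(\rho)$: there is a $\bm{b_2}$-homogeneous $Y\in\U(L)$ of degree $-a$ with $Y.v=v_0$. Writing $Y=\sum_\alpha Y_\alpha$ in $\bm{b_1}$-homogeneous parts and projecting $\sum_\alpha Y_\alpha.v=v_0$ onto $V(\rho)_{(0,\overline{0})}=\C v_0$, only the parts with $\alpha\equiv -c\pmod r$ survive, so some $Y_{\alpha_0}$ with $\alpha_0\equiv -c$ satisfies $Y_{\alpha_0}.v=\lambda v_0$ with $\lambda\ne 0$. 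Then $Y_{\alpha_0}.w=\lambda\,v_0\otimes t^{b+\alpha_0}\in N$ with $b+\alpha_0\equiv i\pmod r$, and finally the $\mathcal{H}_{\bm{b_1}}$-action (i.e. $T_{\rho}(\mathcal{H}_{\bm{b_1}})=T_r$) carries $v_0\otimes t^{b+\alpha_0}$ to $v_0\otimes t^i$. Thus $v_0\otimes t^i\in N$, so $N\supseteq W(i)=M_{\overline{i}}$ and $N=M_{\overline{i}}$, proving irreducibility.

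I expect the main obstacle to be this last step: guaranteeing that an arbitrary homogeneous vector can be raised back to a nonzero multiple of some $v_0\otimes t^n$. The delicate point is extracting the correct $\bm{b_1}$-homogeneous piece $Y_{\alpha_0}$ of the raising operator, and this is precisely where the $\Z/r\Z$-grading (forced by $\rho$ being supported on $r\Z$) is essential, since it ensures that $Y_{\alpha_0}.v$ lands exactly in the one-dimensional space $\C v_0$ rather than only in $V(\rho)_0$ after some cancellation among different $\bm{b_1}$-degrees. Verifying that $\rho$ is supported on $r\Z$, and that this residue grading is inherited by the maximal graded submodule $J(\rho)$ cutting out $V(\rho)$, are the supporting technical checks I would carry out first.
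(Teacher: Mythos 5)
Your proof is correct, and it ends at the same place the paper does---every nonzero submodule is raised to some $v_0\otimes t^n$, and then the $\mathcal{H}_{\bm{b_{1}}}$-action, via $\U(\mathcal{H}_{\bm{b_{1}}})(v_0\otimes t^n)=v_0\otimes (T_r\cdot t^n)$, moves $v_0\otimes t^n$ to $v_0\otimes t^i$---but you get there by a more structural route. The paper's proof merely asserts two facts without proof: (i) any nonzero $\wtL$-submodule of $\widehat{V}(\rho)$ contains some $v_0\otimes t^n$, and (ii) $W(m)=W(n)$ if and only if $t^{m-n}\in T_r$; the decomposition and irreducibility are then read off in a few lines. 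Your conserved residue charge $\chi$ (resting on the correct observation that $T_{\rho}(\mathcal{H}_{\bm{b_{1}}})=T_r$ forces $\rho(E(k\bm{b_{1}}))=\rho(t^{k\bm{b_{1}}})=0$ whenever $r\nmid k$, since any such $k$ with nonzero value would put $t^k\in T_r$) is exactly the invariant underlying (ii), and it buys a clean simultaneous proof of directness and of the identification $W(i)=M_{\overline{i}}$, points the paper leaves implicit; and your raising argument---writing $v_0=\sum_{\alpha}Y_{\alpha}v$, noting each $Y_{\alpha}v$ lies in $V(\rho)_0=\C v_0$ because $\wtL_-$ carries strictly negative $\bm{b_{2}}$-degree, and extracting a $\bm{b_{1}}$-homogeneous $Y_{\alpha_0}$ with $Y_{\alpha_0}v=\lambda v_0\neq0$, so that $Y_{\alpha_0}(v\otimes t^b)=\lambda\,v_0\otimes t^{b+\alpha_0}$---is precisely the missing proof of (i). Two remarks tying up your loose ends. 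First, your restriction to graded submodules loses nothing: the $\Z^2$-grading of $\widehat{V}(\rho)$ is the joint eigenspace decomposition for the semisimple operators $\widetilde{d_1},\widetilde{d_2}$, so every $\wtL$-submodule is automatically $\Z^2$-graded, and your argument in fact proves the paper's stronger unqualified fact (i). Second, the technical check you flagged---that $J(\rho)$ is homogeneous for the residue grading---does go through by a standard automorphism argument: for $\zeta^r=1$ the assignment $E(m\bm{b_{1}}+n\bm{b_{2}})\mapsto\zeta^{m}E(m\bm{b_{1}}+n\bm{b_{2}})$, $t^{m\bm{b_{1}}+n\bm{b_{2}}}\mapsto\zeta^{m}t^{m\bm{b_{1}}+n\bm{b_{2}})}$, $K_i\mapsto K_i$ defines an automorphism of $L$ (central terms occur only in degree $0$) which fixes $\rho$ precisely because $\rho$ is supported on $r\Z$; it therefore induces a twisted linear automorphism of $\overline{V}(\rho)$ fixing $v_0$ and preserving the $\Z$-grading, which sends $J(\rho)$ to a graded submodule avoiding $v_0$, so $J(\rho)$ is invariant by maximality and hence residue-homogeneous (for $r=0$ run the same argument with arbitrary $\zeta\in\C^{*}$). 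With these two checks made explicit, your write-up is a complete and somewhat more self-contained proof than the paper's.
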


\begin{proof}
We need to notice the following two facts. First, any nonzero $\wtL$-submodule of $\widehat{V}(\rho)$ contains $v_0\otimes t^i$ for some $i\in\Z$.
Second, the two $\wtL$-submodules $W(m)=W(n)$ if and only if $t^{m-n}\in T_r$, where $T_r=T_{\rho}(\mathcal{H}_{\bm{b_{1}}})$, $r\in\Z_+$.
For $(1)$, $W(i)$ is an $\Z^2$-graded irreducible $\wtL$-module
follows from that $V(\rho)$ is an irreducible $L$-module. For $(2)$, let $M$ be a nonzero submodule of the $\wtL$-module $W(i)$, then
$v_0\otimes t^n\in M$ for some $n\in\Z$. Since $\U(\mathcal{H}_{\bm{b_{1}}})(v_0\otimes t^i)=v_0\otimes(T_r\cdot t^i)$ and $v_0\otimes t^n\in\U(\mathcal{H}_{\bm{b_{1}}})(v_0\otimes t^i)$, we have $t^{n}\in T_r\cdot t^i$.
This implies that $v_0\otimes t^i\in W(n)\subseteq M$, i.e., $W(i)\subseteq M$. Thus $M=W(i)$, which shows that $W(i)$ is irreducible.
\end{proof}

For $\rho\in\mathcal{E}_{\bm{b_{1}}}$, we know that there exists a unique maximal $\Z^2$-graded submodule
$J(i)$ of $\widehat{V}(\rho)$ which insects $W(i)$ trivially by the Lemma \ref{46}.
Then we get the $\Z^2$-graded irreducible $\wtL$-module
\[\widehat{V}(\rho,i)=\widehat{V}(\rho)/J(i)\simeq W(i).\]

\begin{rmk}(1). From the Lemma $3.3$ in $\cite{W}$, we see $\rho\in\mathcal{E}_{\bm{b_{1}}}$ if $\rho$ is an exp-polynomial
function over $\mathcal{H}_{\bm{b_{1}}}$.\\
(2). For $\rho\in\mathcal{E}_{\bm{b_{1}}}$, we have $W(i)\simeq W(j)$, $i,j\in\Z$ from $(2.4)$ and the Lemma \ref{46}.
\end{rmk}

\begin{lem}\label{47}
(1). For any linear function
$\rho: \mathcal{H}_{\bm{b_{1}}}\rightarrow\C$ with $\rho(f(\bm{b_1}))=\rho(h(\bm{b_1}))=0$,
the $\wtL$-module $\widehat{V}(\rho)$ has finite dimensional weight spaces
if and only if $L$-module $V(\rho)$ has finite weight spaces.\\
(2). For $\rho\in\mathcal{E}_{\bm{b_1}}$, $M\big(\bm{b_{1}}, \bm{b_{2}}, T_{\rho}(\mathcal{H}_{\bm{b_{1}}})(\bm{c},\bm{\lambda})\big)\simeq
\widehat{V}(\rho,0)$ as $\wtL$-module.
\end{lem}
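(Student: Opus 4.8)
The plan is to prove the two parts by quite different means: part (1) is a bare dimension count, while part (2) rests on the universal property of the induced module together with the irreducibility of $W(0)$ furnished by Lemma \ref{46}. For part (1), recall that $\widehat{V}(\rho)=\bigoplus_{m,n\in\Z}\widehat{V}(\rho)_{(m,n)}$ with $\widehat{V}(\rho)_{(m,n)}=V(\rho)_m\otimes t^n$. Since $\widetilde{d_1},\widetilde{d_2}$ are obtained from $d_1,d_2$ through the invertible integral matrix ${\bm{b_{1}} \choose \bm{b_{2}}}^{-1}$ and the central elements $K_i$ act everywhere by the fixed scalars $\rho(K_i)$, the weight spaces of $\widehat{V}(\rho)$ as a $\wtL$-module are precisely the spaces $\widehat{V}(\rho)_{(m,n)}$. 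As $\dim(V(\rho)_m\otimes t^n)=\dim V(\rho)_m$, we have $\dim\widehat{V}(\rho)_{(m,n)}=\dim V(\rho)_m$ for all $m,n$; hence every weight space of $\widehat{V}(\rho)$ is finite dimensional if and only if $\dim V(\rho)_m<\infty$ for all $m$, which is exactly finiteness of the weight spaces of $V(\rho)$.

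For part (2) I would first identify the \emph{top} $\wtL_0$-module of $W(0)\simeq\widehat{V}(\rho,0)$. Because $\wtL_+.v_0=0$ in $V(\rho)$ and every $E(m\bm{b_1}+n\bm{b_2}),t^{m\bm{b_1}+n\bm{b_2}}$ with $n>0$ lies in $\wtL_+$, one checks directly that $\wtL_+.(v_0\otimes t^k)=0$ for all $k$. Using the PBW factorization $\U(\wtL)=\U(\wtL_-)\U(\wtL_0)\U(\wtL_+)$ we then get $W(0)=\U(\wtL_-)\U(\wtL_0)(v_0\otimes 1)$, so the $\widetilde{d_2}$-degree-$0$ subspace of $W(0)$ is $U:=\U(\wtL_0)(v_0\otimes 1)=\U(\mathcal{H}_{\bm{b_{1}}})(v_0\otimes 1)$. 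The assignment $v_0\otimes t^k\mapsto t^k$ identifies $U$ with $\U(\mathcal{H}_{\bm{b_{1}}}).t^0=T_{\rho}(\mathcal{H}_{\bm{b_{1}}})$: comparing with the defining $\mathcal{H}_{\bm{b_{1}}}$-action on $T$ (and using $E(k\bm{b_1})v_0=\rho(E(k\bm{b_1}))v_0$, $t^{k\bm{b_1}}v_0=\rho(t^{k\bm{b_1}})v_0$, $K_iv_0=\rho(K_i)v_0$) shows the two actions agree. Since $V(\rho)_0=\C v_0$ — the degree-$0$ part of $\overline{V}(\rho)\cong\U(\wtL_-)\otimes\C v_0$ is $\C v_0$ — this subspace is genuinely all of the top. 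Transporting the $d_1,d_2$-eigenvalues through $\widetilde{d_1},\widetilde{d_2}$ reproduces the $\bm{\lambda}$-shifted weights, so $U\simeq T_{\rho}(\mathcal{H}_{\bm{b_{1}}})(\bm{c},\bm{\lambda})$ as $\wtL_0$-modules.

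With this identification in hand, $U$ is an irreducible $\wtL_0$-module (as $\rho\in\mathcal{E}_{\bm{b_1}}$), it is annihilated by $\wtL_+$, and it generates $W(0)$. The universal property of $\wtM(\bm{b_{1}},\bm{b_{2}},U)=\U(\wtL)\otimes_{\U(\wtL_+\oplus\wtL_0)}U$ therefore produces a $\wtL$-epimorphism $\wtM(\bm{b_{1}},\bm{b_{2}},U)\twoheadrightarrow W(0)$ restricting to the identity on $U$, whose image is $\U(\wtL)(v_0\otimes 1)=W(0)$. By Lemma \ref{46} the module $W(0)$ is irreducible, while $M(\bm{b_{1}},\bm{b_{2}},U)$ is the unique irreducible quotient of $\wtM(\bm{b_{1}},\bm{b_{2}},U)$; hence $W(0)\simeq M(\bm{b_{1}},\bm{b_{2}},U)=M\big(\bm{b_{1}},\bm{b_{2}},T_{\rho}(\mathcal{H}_{\bm{b_{1}}})(\bm{c},\bm{\lambda})\big)$, and combining with $\widehat{V}(\rho,0)\simeq W(0)$ gives the statement. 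I expect the main obstacle to be exactly the bookkeeping of the middle step: confirming that the top of $W(0)$ is the irreducible $\mathcal{H}_{\bm{b_{1}}}$-module $T_{\rho}(\mathcal{H}_{\bm{b_{1}}})$ rather than a larger slice of $T$, and that the weights carried by $v_0\otimes t^k$ recover the prescribed level $\bm{c}$ and shift $\bm{\lambda}$; once this is pinned down, the conclusion is the standard "surjection onto an irreducible module factors through the unique irreducible quotient" argument.
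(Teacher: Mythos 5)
Your proof is correct and takes essentially the same route as the paper's: part (1) by the identification $\widehat{V}(\rho)_{(m,n)}=V(\rho)_m\otimes t^n$, and part (2) by observing that $\U(\mathcal{H}_{\bm{b_{1}}})(v_0\otimes 1)\simeq T_{\rho}(\mathcal{H}_{\bm{b_{1}}})(\bm{c},\bm{\lambda})$ is annihilated by $\wtL_+$ and then invoking the irreducibility of $\widehat{V}(\rho,0)$ from Lemma \ref{46} together with the uniqueness of the irreducible quotient of the induced module. The paper states exactly this argument in compressed form; your version merely supplies the PBW and weight-bookkeeping details the authors leave implicit.
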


\begin{proof}
(1). Since $\widehat{V}(\rho)_{(m,n)}=V(\rho)_m\otimes t^n$, $m,n\in\Z$, the first assertion is obvious.\\
(2). Note that $\U(\mathcal{H}_{\bm{b_{1}}}).(v_0\otimes 1)\simeq T_{\rho}(\mathcal{H}_{\bm{b_{1}}})$ for $\rho\in\mathcal{E}_{\bm{b_1}}$
and $\wtL_{+}.\big(\U(\mathcal{H}_{\bm{b_{1}}}).(v_0\otimes 1)\big)=0$. Then the result follows the irreducibility of $\widehat{V}(\rho,0)$.
\end{proof}

By the Lemma \ref{47}, together with the Proposition \ref{fws}, we obtain the main result in this section.
\begin{thm}
For $\rho\in\mathcal{E}_{\bm{b_1}}$, the irreducible GHW $\wtL$-module
$M\big(\bm{b_{1}}, \bm{b_{2}}, T_{\rho}(\mathcal{H}_{\bm{b_{1}}})(\bm{c},\bm{\lambda})\big)$
is a Harish-Chandra module if and only if $\rho$ is an exp-polynomial
function over $\mathcal{H}_{\bm{b_{1}}}$.
\end{thm}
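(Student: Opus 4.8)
The plan is to identify $M:=M\big(\bm{b_1},\bm{b_2},T_\rho(\mathcal{H}_{\bm{b_1}})(\bm c,\bm\lambda)\big)$ with the concrete module $\widehat V(\rho,0)$ and then transport the finiteness question back to the highest weight $L$-module $V(\rho)$, where Proposition \ref{fws} supplies the exp-polynomial criterion. Since $M$ is irreducible by its very construction, it is a Harish-Chandra module precisely when all of its weight spaces are finite dimensional, so the entire statement reduces to a quasi-finiteness assertion. First I would invoke Lemma \ref{47}(2), which for $\rho\in\mathcal{E}_{\bm{b_1}}$ gives an isomorphism $M\simeq\widehat V(\rho,0)$ of $\wtL$-modules; hence $M$ is quasi-finite if and only if $\widehat V(\rho,0)$ is.

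Next I would pass from the single irreducible summand to the full tensor module. By Lemma \ref{46} we have $\widehat V(\rho,0)\simeq W(0)$, and $\widehat V(\rho)$ decomposes as a direct sum of the submodules $W(i)$, all of which are mutually isomorphic as $\wtL$-modules via the torus-degree shift. When $T_\rho(\mathcal{H}_{\bm{b_1}})=T_r$ with $r\geq 1$ this is the finite sum $\widehat V(\rho)=\bigoplus_{i=0}^{r-1}W(i)$, and since each weight space satisfies $\widehat V(\rho)_{(m,n)}=\bigoplus_{i=0}^{r-1}W(i)_{(m,n)}$ with $\dim W(i)_{(m,n)}=\dim W(0)_{(m,n-i)}$, the module $\widehat V(\rho)$ has finite dimensional weight spaces if and only if $W(0)\simeq\widehat V(\rho,0)$ does. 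Combined with Lemma \ref{47}(1), which says that $\widehat V(\rho)$ is quasi-finite exactly when the $\Z$-graded $L$-module $V(\rho)$ has finite dimensional weight spaces, this reduces everything to the finiteness of $V(\rho)$.

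Finally I would apply Proposition \ref{fws}: $V(\rho)$ has finite dimensional weight spaces if and only if the two functions $g_1,g_2$ attached to $\rho$ (by $g_1(0)=\text{det}{\bm{b_1} \choose \bm{b_2}}\rho(f(\bm{b_2}))$, $g_2(0)=\text{det}{\bm{b_1} \choose \bm{b_2}}\rho(h(\bm{b_2}))$, and $g_1(m)=\rho(mE(m\bm{b_1}))$, $g_2(m)=\rho(mt^{m\bm{b_1}})$ for $m\in\Z\setminus\{0\}$) obey a common linear recurrence with nonvanishing end coefficients, that is, are exp-polynomial. This is exactly the definition of $\rho$ being an exp-polynomial function over $\mathcal{H}_{\bm{b_1}}$, so chaining the equivalences yields the theorem.

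The step I expect to be the main obstacle is the passage between the single graded piece $\widehat V(\rho,0)=W(0)$ and the full module $\widehat V(\rho)$ in the degenerate case $T_\rho(\mathcal{H}_{\bm{b_1}})=T_0$, where $\widehat V(\rho)=\bigoplus_{i\in\Z}W(i)$ is an infinite direct sum. Here a fixed weight space of $\widehat V(\rho)$ a priori meets infinitely many of the isomorphic summands: one computes $\dim V(\rho)_m=\sum_{n\in\Z}\dim W(0)_{(m,n)}$, and this right-hand side could in principle be an infinite sum of finite terms, so ``$W(0)$ quasi-finite'' does not formally force ``$\widehat V(\rho)$ quasi-finite''. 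To close this gap I would argue the contrapositive directly: if $\rho$ is not exp-polynomial in this case, so that $\rho(f(\bm{b_2}))\neq 0$ or $\rho(h(\bm{b_2}))\neq 0$, I would exhibit infinitely many linearly independent vectors inside a single weight space of $\widehat V(\rho,0)$ by a construction in the spirit of the independence argument of Lemma \ref{nzHCmoL}, thereby showing $M$ is not quasi-finite. The opposite direction, $\rho$ exp-polynomial $\Rightarrow M$ quasi-finite, is immediate: then $V(\rho)$ is finite by Proposition \ref{fws}, so $\widehat V(\rho)$ is finite by Lemma \ref{47}(1), and $W(0)\simeq M$ inherits finiteness as a submodule.
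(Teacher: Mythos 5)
Your proposal follows exactly the paper's own (very terse) proof: the paper likewise chains Lemma \ref{47}(2) ($M\simeq\widehat V(\rho,0)$), the decomposition of Lemma \ref{46}, Lemma \ref{47}(1), and Proposition \ref{fws}, so in outline you have reproduced its argument. The one point where you go beyond the paper is the obstacle you flag, and your concern is legitimate: in the case $T_\rho(\mathcal{H}_{\bm{b_1}})=T_0$ the sum $\widehat V(\rho)=\bigoplus_{i\in\Z}W(i)$ is infinite, $\dim V(\rho)_m=\sum_{k\in\Z}\dim W(0)_{(m,k)}$, and quasi-finiteness of $W(0)$ genuinely does not force quasi-finiteness of $V(\rho)$ --- indeed for a nontrivial GHW module infinitely many of the spaces $W(0)_{(m,k)}$ are nonzero (Lemma \ref{33}(2)), so $V(\rho)$ fails to be quasi-finite even in situations where the weight spaces of $W(0)$ at a fixed level are uniformly small; the paper's one-line proof simply does not address this case. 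Your proposed patch is the right one and can be made completely explicit: when $T_\rho(\mathcal{H}_{\bm{b_1}})=T_0$ the functions $g_1,g_2$ vanish off $0$, so $\rho$ is exp-polynomial iff $\rho(f(\bm{b_2}))=\rho(h(\bm{b_2}))=0$ iff $M$ is the trivial one-dimensional module; if instead, say, $c_4=\rho(h(\bm{b_2}))\neq0$, then $E(-a\bm{b_1}+\bm{b_2})t^{a\bm{b_1}-\bm{b_2}}v_0=c_4v_0\neq0$ shows $t^{a\bm{b_1}-\bm{b_2}}v_0\neq0$ for every $a\in\Z$, and for the vectors $w_a=E(a\bm{b_1}-\bm{b_2})t^{(m-a)\bm{b_1}-\bm{b_2}}v_0$, all of weight $m\bm{b_1}-2\bm{b_2}$, one computes $t^{-a'\bm{b_1}+\bm{b_2}}w_a=\delta_{a,a'}c_4\,t^{(m-a)\bm{b_1}-\bm{b_2}}v_0$ (the nondiagonal terms die because $t^{(a-a')\bm{b_1}}$ commutes with $t^{(m-a)\bm{b_1}-\bm{b_2}}$ and kills $v_0$), so the $w_a$, $a\in\Z$, are linearly independent in a single weight space, exactly in the spirit of Lemma \ref{nzHCmoL}; the case $\rho(f(\bm{b_2}))\neq0$ is identical with $E$-type vectors and the central terms $f$. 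With this supplement your argument is complete and is, if anything, more careful than the published proof.
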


\begin{rmk}
(1). For $\rho\in\mathcal{E}_{\bm{b_1}}$,
the irreducible GHW $\wtL$-module $M\big(\bm{b_{1}}, \bm{b_{2}}, T_{\rho}(\mathcal{H}_{\bm{b_{1}}})(\bm{c},\bm{\lambda})\big)$
is a one dimensional trivial module if and only if $\bm{c}=\bm{0}\in\C^4$ and $T_{\rho}(\mathcal{H}_{\bm{b_{1}}})=T_0$.\\
(2). If $\rho$ is an exp-polynomial
function over $\mathcal{H}_{\bm{b_{1}}}$, then we have that the GHW Harish-Chandra
$M\big(\bm{b_{1}}, \bm{b_{2}}, T_{\rho}(\mathcal{H}_{\bm{b_{1}}})(\bm{c},\bm{\lambda})\big)$
is a one dimensional trivial module if and only if $\rho=0$, i.e., $T_{\rho}(\mathcal{H}_{\bm{b_{1}}})=T_0$.
\end{rmk}

\end{document}